\theoremstyle{plain}
 \newtheorem{thm}{Theorem}[section]
 \newtheorem{prop}{Proposition}[section]
 \newtheorem{lem}{Lemma}[section]
 \newtheorem{cor}{Corollary}[section]
 \newtheorem{lem'}{``Lemma''}[section]
 \newtheorem{fac}{Fact}[section]
\theoremstyle{definition}
 \newtheorem{ex}{Example}[section]
 \newtheorem{defn}{Definition}[section]
\theoremstyle{remark}
 \newtheorem{rmk}{Remark}[section]
 \numberwithin{equation}{section}
\renewcommand{\leq}{\leqslant}
\renewcommand{\geq}{\geqslant}
\newcommand{\N}{{\mathbb N}}
\newcommand{\Q}{{\mathbb Q}}
\newcommand{\Z}{{\mathbb Z}}
\newcommand{\Gm}{\mathbb{G}_m}
\newcommand{\Gml}{\mathbb{G}_{m,\mr{log}}}
\newcommand{\Tlog}{T_{\mathrm{log}}}
\newcommand{\mr}{\mathrm}
\newcommand{\mc}{\mathcal}
\title{Degenerating abelian varieties via log abelian varieties}
\keywords{degeneration, log abelian varieties}
\author[H. Zhao]{\bfseries Heer Zhao}
\address{
Department of Pure Mathematics and Mathematical Statistics \\ 
University of Cambridge   \\ 
Cambridge\\
UK}
\email{heer.zhao@gmail.com}
\begin{document}

\vspace{18mm} \setcounter{page}{1} \thispagestyle{empty}

\begin{abstract}
For any split totally degenerate abelian variety over a complete discrete valuation field, we construct a log abelian variety, in the sense of \cite{k-k-n2}, over the discrete valuation ring extending the given abelian variety. This generalizes Kato's log Tate curve.
\end{abstract}

\maketitle

\section*{Introduction}
Degeneration appears naturally in compactifications of moduli spaces. Usually we prefer compactifications coming from moduli problems, in other words we prefer to use canonical (in a suitable sense) degenerate objects to make compactifications.

In the theory of classical toroidal compactifications of the moduli spaces of abelian varieties, there is no canonical choice of toroidal degenerations of abelian varieties. In late 80's, Kato in \cite[Sec. 2.2]{kat3} formulated a construction of log Tate curve, and conjectured the existence of a general theory of log abelian varieties. Later Kajiwara, Kato and Nakayama realised the theory of log abelian varieties in \cite{k-k-n1, k-k-n2}. Note that as indicated in \cite{k-k-n2}, there are other constructions of log abelian varieties in \cite{pah1, ols1}. However in this paper, we stick to the one defined in \cite{k-k-n2}. In some sense, a log abelian variety as a degeneration of a given abelian variety is to treat all possible toroidal degenerations of that abelian variety as ``one object'', hence it becomes canonical. This ``one object'' is proper, smooth, and even has a group structure on itself in the world of log geometry. These aspects make log abelian variety a perfect degeneration of abelian variety. For application of log abelian varieties, the short exact sequence in \cite[4.1.2]{k-k-n2} is the upshot.

Let $R$ be a complete discrete valuation ring with fraction field $K$, let $A_K$ be an abelian variety over $K$. Since log abelian varieties are supposed to be canonical degenerations of abelian varieties, there should be a canonical (unique) log abelian variety $A$ over $R$ extending $A_K$. As an example of log abelian variety, the authors of \cite{k-k-n2} constructed such a log abelian variety $\mc{E}_q$ over $R$ (or $O_K$ as in their notation) for the Tate curve $E_q$ over $K$ with ``$q$-invariant'' $q$, see \cite[1.6, 1.7, 4.7]{k-k-n2}. In this paper we generalise their log Tate curve to higher dimension case for split totally degenerate abelian varieties over complete discrete valuation fields. The difficulty of the generalisation lies in two aspects. Firstly, in the curve case, the formal toroidal models can  always be algebraized to schemes, whilst in the higher dimension case we have to turn to algebraic spaces which are more technical to handle. Secondly, as in most cases in mathematics, hard combinatorics shows up in higher dimension.

In the first section, we give the setting-up. In section 2, the main result is Theorem \ref{mainthm}, which says that the formal toroidal model $\mc{A}_{\Sigma}$ associated to any $Y$-admissible polytope decomposition $\Sigma$ algebraizes to an algebraic space $A_{\Sigma}$. Artin's theorems on ``existence of contractions and dilatations'' \cite{art2} are crucial for the proof. In section 3, we investigate the algebraic space $A_{\Sigma}$ in some details, and put a canonical log structure on it. The key point of this section is Corollary \ref{cor3.2}. In section 4, we construct a sheaf $A$, see (\ref{deflav}), and show that $A$ is the log abelian variety extending the given abelian variety $A_K$ over $K$ in Theorem \ref{thm4.2}. And the association of the log abelian variety $A$ to $A_K$ is actually a functor, see Theorem \ref{thm4.3}.

\section*{Acknowledgements }
I am grateful to my PhD supervisor Professor Anthony Scholl for introducing the beautiful work \cite{k-k-n2} to me, and also for encouragement and discussions during my PhD. I owe lots of thanks to Professor Martin Olsson, the discussion with whom brought Artin's work \cite{art2} into my attention. Part of this work was done when I was an informal guest at Professor B\"ockle's working group; I thank him for his kindness and hospitality. I would also thank Professor Jakob Stix and Professor Weizhe Zheng for helpful discussions.

\section{Setting-up}
Let $S$ be $\mr{Spec} R$, where $R$ is a complete DVR with fraction field $K$, a chosen uniformiser $\pi$ and residue field $k$. For each $n\in\N$, let $S_n$ be $\mr{Spec} R/({\pi})^{n+1}$ and $i_n$ the closed immersion $S_n\rightarrow S$. When $n$ is 0, we also use the notation $s$ (resp. $i$) for $S_0$ (resp. $i_0$). We regard $S$ as a log scheme with respect to the canonical log structure, i.e. the log structure associated to $\N\rightarrow R, 1\mapsto \pi$. We endow $S_n$ with the log structure induced from $S$. Let $j$ be the open immersion $\mr{Spec}K\rightarrow S$.

Let $(\mr{fs}/S)$ be the category of fs log algebraic spaces over $S$, and we regard it as a site endowed with the classical \'etale topology. Let $(\mr{fs}/S)'$ be the full subcategory of $(\mr{fs}/S)$ consisting of objects on which $\pi$ is locally nilpotent. We also endow $(\mr{fs}/S)'$ with the classical \'etale topology. For any fs log algebraic space $X$ over $S$, we don't distinguish the log algebraic space $X$ from the sheaf on $(\mr{fs}/S)$ represented by $X$.

Let $A_K$ be a semi-stable abelian variety over $K$ of dimension $d$ and let $A_K^*$ be the dual abelian variety of $A_K$, then we have the following two diagrams
\begin{equation}\label{diag1}
\xymatrix{
&&0\ar[d]  \\
&&Y\ar@{-->}[d]_{u_K}\ar[rd]^v  \\
0\ar[r] &T\ar[r] &\tilde{G}\ar[r]\ar@{-->}[d] &B\ar[r] &0 \\
&&A_K\ar[d]  \\
&&0
}
\end{equation}
and
\begin{equation}\label{diag2}
\xymatrix{
&&0\ar[d]  \\
&&X\ar@{-->}[d]_{u_K^*}\ar[rd]^{v^*}  \\
0\ar[r] &T^*\ar[r] &\tilde{G}^*\ar[r]\ar@{-->}[d] &B^*\ar[r] &0 \\
&&A_K^*\ar[d]  \\
&&0 . 
}
\end{equation}
The diagrams (\ref{diag1}) and (\ref{diag2}) are explained as follows:
\begin{enumerate}
\item[(a)] the rows in (\ref{diag1}) and (\ref{diag2}) are exact sequences of group schemes over $S$, which are the Raynaud extensions associated to $A_K$ and $A_K^*$ respectively. In particular, $T$ and $T^*$ are tori over $S$, and $B$ and $B^*$ are abelian schemes over $S$ dual to each other;
\item[(b)] the morphisms (labeled as dashed arrows) in the columns in
 (\ref{diag1}) and (\ref{diag2}) are defined rigid-analytically over
  $K$, but the morphisms $u_K$ and $u_K^*$ are also algebraic; $Y$
  (resp. $X$) is the character group of $T^*$ (resp. $T$), hence
  a locally constant sheaf over $S$ represented by a finite rank
  free $\Z$-module \'etale locally, and
  $\tilde{G}_K:=\tilde{G}\times_S K$ (resp. $\tilde{G}_K^*:=\tilde{G}^*\times_S K$) is the rigid analytic uniformization of $A_K$ (resp. $A_K^*$);
\item[(c)] $v$ (resp. $v^*$) is a morphism of group schemes over $S$ given by the $1$-motive dual of the Raynaud extension associated to $A_K^*$ (resp. $A_K$), and $u_K$ (resp. $u_K^*$) lifts $v$ (resp. $v^*$) over $K$.
\end{enumerate}

Via the duality theory of $1$-motives, the diagram (\ref{diag1}) (or equivalently (\ref{diag2})) is equivalent to another commutative diagram
\begin{equation}\label{diag3}
\xymatrix{
&&&Y\times_S X\ar@{-->}[ld]_{s_K}\ar[d]^{v\times v^*}  \\
0\ar[r] &\Gm\ar[r] &\mathscr{P}_B\ar[r] &B\times_S B^*\ar[r] &0 ,
}
\end{equation}
where the row in the diagram is the Poincar\'e biextension of
$(B,B^*)$ by $\Gm$ and $s_K$ is a bilinear section over $K$ along
$v\times v^*$.


From now on, we assume that $T$ is a split torus (in other words $X$ is constant), $Y$ is constant, and $A_K$ is totally degenerate, i.e. $B$ is zero (in the future we will deal with the general case). And in this case, we say that $A_K$ is \textbf{split totally degenerate}. Then the bilinear section $s_K$ is just a bilinear pairing 
\begin{equation}\label{eq1.1}
<,>:X\times Y\rightarrow K^{\times}.
\end{equation} 
Here we switch the positions of $Y$ and $X$ for coincidence with \cite{k-k-n2}). Composing the paring (\ref{eq1.1}) with the valuation map of $K$, we get a pairing 
\begin{equation}\label{eq1.2}
<,>:X\times Y\rightarrow \Z.
\end{equation} 
Since (\ref{eq1.1}) and (\ref{eq1.2}) are closely related, we denote both of them by $<,>$ by abuse of notation.

\section{Proper models associated to admissible polytope decompositions}
 As in \cite[Section 6]{mum1}, we study the (convex) polytope decompositions of the affine space $E:=\mr{Hom}(X,\Q)$. For the definition of polytopes and other notion concerning polytopes, we refer to \cite[Appendix]{oda1}.

\begin{defn}\label{def2.1}
A \textbf{polytope decomposition} $\Sigma$ of $E$ is a set of polytopes $\sigma\subset E$ such that
\begin{enumerate}
\item[(1)]$\cup_{\sigma\in\Sigma}\sigma=E$;
\item[(2)]if $\tau\leq\sigma$ and $\sigma\in\Sigma$, then $\tau\in\Sigma$;
\item[(3)]if $\sigma,\tau\in\Sigma$ with $\sigma\cap\tau\neq\emptyset $, then $\sigma\cap\tau$ is a common face of $\sigma$ and $\tau$.
\end{enumerate}

Given another polytope decomposition $\Sigma'$ of $E$, we say that there exists a \textbf{map from $\Sigma'$ to $\Sigma$} if for any $\sigma'\in\Sigma'$ there is a $\sigma\in\Sigma$ such that $\sigma'\subseteq\sigma$. It is easy to see that there exists at most one map from $\Sigma'$ to $\Sigma$. And if there exists one, it realises $\Sigma'$ as a subdivision of $\Sigma$.

Let $H$ be a group acting on $E$, a polytope decomposition $\Sigma$ is called \textbf{$H$-stable}, if $h\cdot\sigma\in\Sigma$ for any $\sigma\in\Sigma$ and any $h\in H$. If moreover $\Sigma$ has only finitely many orbits, then $\Sigma$ is called \textbf{$H$-admissible} (or simply \textbf{admissible} if the underlying group is clear in the context).

If $\Sigma'$ is an $H'$-stable polytope decomposition for another group $H'$ which acts on $E$ too, and we are given a group homomorphism $f:H'\rightarrow H$, then a map from $\Sigma'$ to $\Sigma$ is called \textbf{equivariant}, if it is compatible with the homomorphism $f$ and the group actions, i.e. $h'\cdot\sigma'\subset f(h')\cdot\sigma$ for any $h'\in H',\sigma'\in\Sigma',\sigma\in\Sigma$ such that $\sigma'\subset\sigma$ (we will be particularly interested in the case that $H'$ is a subgroup of $H$). A \textbf{map from the pair $(H',\Sigma')$ to the pair $(H,\Sigma)$} is defined to be an equivariant map from $\Sigma '$ to $\Sigma$. 
\end{defn}

The bilinear form (\ref{eq1.1}) $<,>: X\times Y\rightarrow K^{\times}$ realises $Y$ as a period lattice inside $T_K=\mc{H}om(X,\mathbb{G}_{m,K})$. Hence $Y$ acts on $T_K$ by translation as well as on $E=\mr{Hom}(X,\Q)$. We will often work with the $Y$-action on $E$, hence being admissible will always mean being $Y$-admissible if the acting group is not specified.

\begin{fac}\label{fac2.1}
Given any admissible polytope decomposition $\Sigma$ of $E$, by\\
\cite[Chap. IV, Sect. 3]{k-k-m-s} and \cite[Cor. 6.6]{mum1} we get a normal scheme $P_{\Sigma}$ locally of finite type over $S$ such that:
\begin{enumerate}
\item[(a)] $P_{\Sigma,K}=T_K$.
\item[(b)] The translation action of $T_K$ on itself extends to a $T$-action on $P_{\Sigma}$, and $P_{\Sigma}$ can be covered by some $T$-invariant affine open sets $P_{\sigma}$, which are in one to one correspondence with $\sigma\in\Sigma$. Here $P_{\sigma}=\mr{Spec}A_{\sigma}$, where $A_{\sigma}=R[C(\sigma)^{\vee}\cap \mathbb{X}], \mathbb{X}=\pi^{\Z}\oplus X$ and $C(\sigma)$ is the cone in $\mathbb{E}:=\mr{Hom}(\mathbb{X},\Q)=
\Q\oplus E$ above $\sigma\subseteq E$ with $E$ identified with the hyperplane $(1,E)$ in $\mathbb{E}$.
\item[(c)] $P_{\sigma}\cap P_{\tau}=P_{\sigma\cap\tau}$ (resp. $P_{\sigma}\cap P_{\tau}=T_K$) for any $\sigma,\tau\in\Sigma$ with $\sigma\cap\tau\neq\emptyset$ (resp. $\sigma\cap\tau=\emptyset$).
\item[(d)] The torus $T$ naturally embeds into $P_{\Sigma}$ if and only if $\{0\}\in\Sigma$.
\item[(e)] For all valuations $v$ on $\mr{Frac}(T)$ if $v\geq 0$ on $R$ and if [for any $x\in X,\exists$ $n\in\Z$ such that $n\cdot v(\pi)\geq v(x)\geq -nv(\pi)$] hold, then $v$ has a centre on $P_{\Sigma}$.
\item[(f)] The action of $Y$ on $\Sigma$ gives rise to an action of $Y$ on $P_{\Sigma}$, via
$$S_y:P_{\sigma}\rightarrow P_{y+\sigma} $$
$$S_y^*:C(y+\sigma)^{\vee}\cap \mathbb{X}\rightarrow C(\sigma)^{\vee}\cap \mathbb{X}, \pi^n x\longmapsto<x,y>\pi^nx$$
for $y\in Y$ and $\pi^nx\in C(y+\sigma)^{\vee}\cap \mathbb{X}$ with $x$ the $X$-part. And the action induces an action on $P_{\Sigma,n}:=P_{\Sigma}\times_S S_n$ for each $n\in\N$.
\end{enumerate}
\end{fac}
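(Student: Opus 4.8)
The plan is to recognise $P_{\Sigma}$ as the relative torus embedding over $S=\mr{Spec}R$ attached to the fan obtained from $\Sigma$ by the cone-over-a-polytope construction, and then to read off (a)--(f) from the combinatorics, invoking \cite[Chap. IV, sect. 3]{k-k-m-s} and \cite[Cor. 6.6]{mum1} for the general machinery. Since we have assumed $T$ split, $X$ is a genuine finite free $\Z$-module rather than merely an \'etale-local one, so no descent is needed and we may argue with $X$ as a fixed lattice. The first step is to pass from the decomposition $\Sigma$ of $E$ to a fan in $\mathbb{E}=\Q\oplus E$: for each $\sigma\in\Sigma$ put $C(\sigma)=\Q_{\geq 0}\cdot(\{1\}\times\sigma)$. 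Because $\Sigma$ is a rational polytope decomposition, each $C(\sigma)$ is a rational polyhedral cone, the proper faces of $C(\sigma)$ are exactly the $C(\tau)$ with $\tau\leq\sigma$ together with the origin, and $C(\sigma)\cap C(\tau)=C(\sigma\cap\tau)$; hence $\{C(\sigma):\sigma\in\Sigma\}$ is a fan (infinite, but with finitely many $Y$-orbits) whose support is $\Q_{>0}\times E\cup\{0\}$.

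Next, for each $\sigma$, Gordan's lemma shows that $C(\sigma)^{\vee}\cap\mathbb{X}'$ is a finitely generated saturated monoid, so $A_{\sigma}=R[C(\sigma)^{\vee}\cap\mathbb{X}']$ is a normal $R$-algebra of finite type and $P_{\sigma}=\mr{Spec}A_{\sigma}$ is a normal affine $S$-scheme. A face $\tau\leq\sigma$ corresponds to localising $A_{\sigma}$ at the monomial supporting that face, so $P_{\tau}\hookrightarrow P_{\sigma}$ is an open immersion; gluing the $P_{\sigma}$ along these immersions yields the normal scheme $P_{\Sigma}$ locally of finite type over $S$, which is (b), (c) and normality. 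Inverting $\pi$ inverts the monomial $(1,0)$ and saturates $C(\sigma)^{\vee}\cap\mathbb{X}'$ to the full lattice in the $X$-directions, so $A_{\sigma}[\pi^{-1}]=K[X]=\mc{O}(T_K)$ uniformly in $\sigma$, giving (a); the $X$-grading of each $A_{\sigma}$ assembles the extension of the translation $T$-action. For (d) one computes directly that $C(\{0\})^{\vee}\cap\mathbb{X}'=\{\pi^n x:n\geq 0,\ x\in X\}$, so $A_{\{0\}}=R[X]=\mc{O}(T)$ and $P_{\{0\}}=T$, an open subscheme of $P_{\Sigma}$ precisely when $\{0\}\in\Sigma$. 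Statement (e) is the valuative criterion for $P_{\Sigma}$: the displayed boundedness inequalities say exactly that the point of $\mathbb{E}$ determined by $\alpha\mapsto v(\ms{X}^{\alpha})$ lies in the support of the fan, and the theory of \cite[Chap. IV, sect. 3]{k-k-m-s} then produces a centre in the corresponding cone.

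The part that requires genuine care, and where the degeneration data enters, is the $Y$-action (f). As $\Sigma$ is $Y$-stable, translation by $y\in Y$ carries $\sigma$ to $y+\sigma\in\Sigma$; at the level of $\mathbb{E}$ this is the height-preserving shear $\Phi_y\colon(t,p)\mapsto(t,\,p+t\ell_y)$, where $\ell_y\in E$ is the functional $\alpha\mapsto v(\langle\alpha,y\rangle)$ giving the $Y$-action on $E$, and $\Phi_y(C(\sigma))=C(y+\sigma)$. The induced comorphism on monomials is then forced to be twisted by the pairing, namely $S_y^{*}(\pi^n x)=\langle x,y\rangle\pi^n x$. The main thing to verify is that this is well defined over $R$ and not merely over $K$: writing $\langle x,y\rangle$ as a unit times $\pi^{v(\langle x,y\rangle)}$, the twisted monomial is a unit multiple of $\pi^{\,n+v(\langle x,y\rangle)}x$, and membership in $C(\sigma)^{\vee}\cap\mathbb{X}'$ is exactly the statement that the dual shear $\Phi_y^{*}$ carries $C(y+\sigma)^{\vee}$ onto $C(\sigma)^{\vee}$, the unit part being harmless. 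Multiplicativity of $\langle\cdot,y\rangle$ in $X$ makes $S_y^{*}$ an $R$-algebra isomorphism, and bilinearity gives the cocycle identity $S_y^{*}\circ S_{y'}^{*}=S_{y+y'}^{*}$, so the $S_y$ glue to an action of $Y$ on $P_{\Sigma}$ which visibly descends to each $P_{\Sigma,n}$. I expect this integrality check on the twisted monomials---rather than the essentially citable assertions (a)--(e)---to be the crux of the argument, precisely because it is the point at which the toric combinatorics is coupled to the period pairing $\langle,\rangle$.
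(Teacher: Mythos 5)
Your proposal is correct and is essentially the same argument as the paper's: the paper offers no proof of this Fact at all, deferring entirely to the cited references, and what you have written out (the cone-over-polytope fan in $\mathbb{E}=\Q\oplus E$, Gordan's lemma and gluing along faces, inverting $\pi$ for the generic fibre, and the period-twisted comorphism $S_y^{*}(\pi^n x)=\langle x,y\rangle\pi^n x$) is precisely the construction of \cite[Chap.~IV, sect.~3]{k-k-m-s} and \cite[Sec.~6]{mum1}. In particular your integrality check for $S_y^{*}$ --- that $(n,x)\in C(y+\sigma)^{\vee}$ implies $(n+v(\langle x,y\rangle),x)\in C(\sigma)^{\vee}$, the unit part of $\langle x,y\rangle$ being harmless --- is exactly the computation underlying Mumford's definition of the $Y$-action, so nothing is missing.
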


Let $H$ be a subgroup of finite index of $Y$, then it is also a period lattice inside $T_K$. A $Y$-admissible polytope decomposition $\Sigma$ of $E$ is automatically $H$-admissible. The scheme $P_{\Sigma}$ from Fact \ref{fac2.1} is independent of whether $\Sigma$ is regarded  as being $Y$-admissible or $H$-admissible. Let $\tilde{\Sigma}$ be an $H$-admissible polytope decomposition of $E$, then by Fact \ref{fac2.1} we get a scheme $P_{\tilde{\Sigma}}$ endowed with an $H$-action. If there exists a map $\Sigma\rightarrow\tilde{\Sigma}$, i.e. $\Sigma$ is a subdivision of $\tilde{\Sigma}$, then we have a natural morphism of $S$-schemes $P_{\Sigma}\rightarrow P_{\tilde{\Sigma}}$ which is compatible with the $H$-actions.

\begin{prop}\label{prop2.1}
Let $H$ be a subgroup of finite index of $Y$, $\Sigma$ an $H$-admissible polytope decomposition of $E$. The quotient of the $H$-action on $P_{\Sigma,n}$ exists in the category of schemes over $S_n$.
\end{prop}
\begin{proof}
This is obvious, since $P_{\sigma,n}\cap P_{\tau,n}=P_{\sigma\cap\tau,n}$ (resp. $P_{\sigma,n}\cap P_{\tau,n}=\emptyset$) for $\sigma,\tau\in\Sigma$ with $\sigma\cap\tau\neq\emptyset$ (resp. $\sigma\cap\tau=\emptyset$).
\end{proof}

Let $(H,\Sigma)$ be as in Proposition \ref{prop2.1}. We take the quotient scheme of $P_{\Sigma,n}=P_{\Sigma}\times_S S_n$ by the $H$-action, and denote it by $A_{H,\Sigma,n}$. Passing to the colimit of $\{A_{H,\Sigma,n}\}_{n\in\N}$, we get a formal scheme $\mc{A}_{H,\Sigma}$ over $\mc{S}:=\mr{Spf} R$. We will abbreviate $\mc{A}_{Y,\Sigma}$ as $\mc{A}_{\Sigma}$ sometime, if there is no other group action involved in the context.

Let $\tilde{H}$ be a subgroup of finite index of $H$, $\tilde{\Sigma}$ an $\tilde{H}$-admissible polytope decomposition of $E$ such that $\tilde{\Sigma}$ is a subdivision of $\Sigma$, so we have a map $(\tilde{H},\tilde{\Sigma})\rightarrow(H,\Sigma)$. The map $\tilde{\Sigma}\rightarrow\Sigma$ induces a morphism $P_{\tilde{\Sigma}}\rightarrow P_{\Sigma}$ of $S$-schemes which is compatible with the $\tilde{H}$-actions, hence a morphism $\mc{A}_{\tilde{H},\tilde{\Sigma}}\rightarrow\mc{A}_{\tilde{H},\Sigma}$ of formal schemes over $\mc{S}$. We also have a canonical quotient morphism $\mc{A}_{\tilde{H},\Sigma}\rightarrow\mc{A}_{H,\Sigma}$ of formal schemes over $\mc{S}$. Taking the composition of these two morphisms, we get a canonical morphism $\mc{A}_{\tilde{H},\tilde{\Sigma}}\rightarrow\mc{A}_{H,\Sigma}$. If $\tilde{H}=H=Y$, we sometime abbreviate the morphism as $\mc{A}_{\tilde{\Sigma}}\rightarrow\mc{A}_{\Sigma}$.

\begin{defn}\label{def2.2}
Let $H$ be a subgroup of finite index of $Y$, we denote by $\mr{PolDecom}_{Y,H}$ the category of $H$-admissible polytope decompositions of $E$ with morphisms given by subdivisions. We denote by $\mr{PolDecom}_{Y}$ the category of pairs $(H,\Sigma)$, where $H$ is a subgroup of finite index of $Y$ and $\Sigma$ lies in $\mr{PolDecom}_{Y,H}$, and morphisms are just maps between pairs $(\tilde{H},\tilde{\Sigma})$ and $(H,\Sigma)$. 
\end{defn}

The category $\mr{PolDecom}_{Y,H}$ sits in the category $\mr{PolDecom}_{Y}$ as a full subcategory via $\Sigma\mapsto (H,\Sigma)$. The association of the formal scheme $\mc{A}_{H,\Sigma}$ over $\mc{S}$ to an element $(H,\Sigma)$ of $\mr{PolDecom}_{Y}$ gives rise to a functor 
\begin{equation}\label{formular1}
\mc{F}:\mr{PolDecom}_{Y}\rightarrow \{\mc{S}\text{-formal schemes} \},
\end{equation}
where $\{\mc{S}\text{-formal schemes} \}$ denotes the category of formal schemes over $\mc{S}$. By abuse of notation, we still denote by $\mc{F}$ the restriction of $\mc{F}$ to $\mr{PolDecom}_{Y,H}$.

\begin{thm}[Mumford\cite{mum1}, Alexeev and Nakamura \cite{a-n1},\\ Alexeev\cite{ale1}]\label{mumthm}
There exists a $Y$-admissible polytope decomposition $\Xi$ such that
the formal scheme $\mc{A}_{\Xi}$ admits an ample line bundle, hence
it is algebraisable, i.e. it comes from the formal completion of a
unique algebraic scheme $A_{\Xi}$ over $S$ along its special fibre.

Moreover, $A_{\Xi}$ is a stable semiabelic scheme under the
semiabelian scheme $G$ over $S$, where $G$ comes from the semistable
reduction theorem. And we can choose $\Xi$ such that $\{0\}\in\Xi$,
hence $A_{\Xi}$ contains $G$ as an open subscheme canonically.
\end{thm}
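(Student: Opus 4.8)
The plan is to follow Mumford's analytic construction \cite{mum1} and graft onto it the theory of stable semiabelic varieties of Alexeev and Nakamura. The decisive input is ampleness, and this comes from a polarization, so first I would fix one. Every abelian variety over the field $K$ is projective, so $A_K$ carries a polarization $A_K\to A_K^*$; passing to the uniformizations in (\ref{diag1}) and (\ref{diag2}) this induces a symmetric homomorphism $\lambda\colon Y\to X$. Composing the pairing (\ref{eq1.1}) with the normalised valuation $\mr{val}\colon K^\times\to\Z$ then yields a symmetric bilinear form $b(y,y')=\mr{val}\langle\lambda(y),y'\rangle$ on $Y$, which is \emph{positive definite} precisely because $\lambda$ comes from a polarization. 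This positive-definiteness is what will ultimately force ampleness on the quotient; every later step is in some sense bookkeeping around it.

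Next I would produce the decomposition $\Xi$ together with a line bundle. Following \cite[Sec. 6]{mum1} and \cite[Chap. IV, sect. 3]{k-k-m-s}, one chooses a piecewise-linear, strictly convex, $Y$-quasi-periodic function $\phi\colon E\to\R$, i.e. one with
\[
\phi(y+x)=\phi(x)+\ell_y(x)+\tfrac{1}{2}b(y,y),
\]
where $\ell_y$ is the linear functional determined by the pairing. The maximal domains of linearity of $\phi$ are the top-dimensional cells of a $Y$-stable polytope decomposition $\Xi$ of $E$, and the periodicity of the second-order data makes $\Xi$ have only finitely many $Y$-orbits, hence $Y$-admissible. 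Simultaneously $\phi$ defines a $T$-linearised line bundle $L$ on $P_\Xi$; its quasi-periodicity under the translations $S_y$ of \ref{fac2.1} is exactly the descent datum needed to push $L$ down to a line bundle on each $A_{\Xi,n}$, and therefore to a line bundle on the formal scheme $\mc{A}_\Xi$. Strict convexity of $\phi$ gives relative ampleness on $P_\Xi$, and positive-definiteness of $b$ upgrades this to genuine ampleness after the quotient by $Y$. I would arrange that $\{0\}$ is a vertex of $\Xi$, which by \ref{fac2.1}(d) embeds $T$ into $P_\Xi$.

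With an ample line bundle on the proper formal scheme $\mc{A}_\Xi$ over $\mr{Spf}\,R$ in hand, algebraization is formal: by Grothendieck's existence theorem (formal GAGA over the complete Noetherian ring $R$), the pair $(\mc{A}_\Xi,L)$ is the formal completion along the special fibre of a unique projective $S$-scheme $A_\Xi$ equipped with an ample line bundle; properness is guaranteed because $\Xi$ covers $E$ with finitely many $Y$-orbits. The semiabelian scheme $G$ is the one attached to $A_K$ by the semistable reduction theorem, sitting inside the Raynaud extension of (\ref{diag1}); since $\{0\}\in\Xi$ gives $T\hookrightarrow P_\Xi$, the corresponding open chart descends to a canonical open immersion $G\hookrightarrow A_\Xi$, and the $T$-action on $P_\Xi$ induces the asserted $G$-action on $A_\Xi$.

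Finally, that $A_\Xi$ is a \emph{stable} semiabelic scheme under $G$ — seminormal, with the $G$-action having finitely many orbits and the prescribed local toric structure — is exactly the classification of Alexeev and Nakamura \cite{a-n1} and Alexeev \cite{ale1}, which I would invoke directly. The hardest step is the construction and descent of the ample line bundle: checking that strict convexity of $\phi$ together with positive-definiteness of $b$ produces a bundle that is ample on the \emph{quotient} $\mc{A}_\Xi$, rather than merely relatively ample on the non-proper $P_\Xi$, is the technical core of Mumford's method, and once it is settled everything downstream reduces to citing formal GAGA and the semiabelic classification.
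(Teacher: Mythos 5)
The paper offers no proof of this theorem at all: it is imported wholesale from the cited literature, exactly as the attribution ``Mumford \cite{mum1}, Alexeev--Nakamura \cite{a-n1}, Alexeev \cite{ale1}'' indicates, and the text simply fixes such a $\Xi$ and moves on. Your sketch is a correct reconstruction of precisely that cited argument --- polarization giving a positive-definite form on $Y$, the associated (Delaunay-type) decomposition and quasi-periodic convex function, descent of the resulting ample bundle to $\mc{A}_{\Xi}$, algebraization by Grothendieck's existence theorem, and the stable semiabelic structure invoked directly from \cite{a-n1, ale1} --- so it takes essentially the same route as the paper.
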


From now on, we fix such a $Y$-admissible polytope decomposition $\Xi$. The main result in this section is the following theorem.

\begin{thm}[Algebraisation of formal models]\label{mainthm}
Let $\Gamma$ be a $Y$-admissible polytope decomposition of $E$, the formal scheme $\mc{A}_{\Gamma}$ over $\mc{S}$ comes from the formal completion of a unique algebraic space $A_{\Gamma}$ over $S$ along its special fibre. 

Moreover, $A_{\Gamma}$ is a proper model (in the category of $S$-algebraic spaces) of $A_K$, i.e. the structure morphism of $A_{\Gamma}$ over $S$ is proper and the generic fibre $(A_{\Gamma})_K:=A_{\Gamma}\times_{S}K$ coincides with $A_K$.
\end{thm}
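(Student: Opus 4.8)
The goal is to algebraise the formal scheme $\mc{A}_{\Gamma}$ for an *arbitrary* $Y$-admissible $\Gamma$, knowing (Theorem \ref{mumthm}) that algebraisation holds for the special $\Gamma = \Xi$ with an ample line bundle. The key strategic idea is to reduce the general case to the polarised case $\Xi$ via Artin's theorems on existence of contractions and dilatations \cite{art2}, which is why those results are flagged as crucial in the introduction.

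Let me think about this carefully. For $\Gamma = \Xi$ we have an actual algebraic scheme $A_\Xi$. For general $\Gamma$, both $\Gamma$ and $\Xi$ are $Y$-admissible decompositions of the same space $E$, so I can form a common refinement $\Sigma$ that subdivides both. The common refinement is again $Y$-admissible (finitely many $Y$-orbits, since both decompositions have finitely many orbits), and the maps of decompositions $\Sigma \to \Xi$ and $\Sigma \to \Gamma$ induce morphisms of formal schemes $\mc{A}_\Sigma \to \mc{A}_\Xi$ and $\mc{A}_\Sigma \to \mc{A}_\Gamma$, as constructed via the functor $\mc{F}$ in (\ref{formular1}).

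So here is the plan. First I would reduce to two cases: (i) $\Gamma$ is a subdivision of $\Xi$, and (ii) $\Xi$ is a subdivision of $\Gamma$; the general case follows by interposing the common refinement $\Sigma$ and treating the two legs $\mc{A}_\Sigma \to \mc{A}_\Xi$ and $\mc{A}_\Sigma \to \mc{A}_\Gamma$ separately. For case (i), the morphism $\mc{A}_\Gamma \to \mc{A}_\Xi$ is a proper birational (blow-up type) modification; here I would invoke Artin's existence-of-dilatations theorem to produce an algebraic space $A_\Gamma$ together with a proper morphism $A_\Gamma \to A_\Xi$ whose formal completion recovers $\mc{A}_\Gamma \to \mc{A}_\Xi$. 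The input data is the formal modification $\mc{A}_\Gamma \to \widehat{A_\Xi}$, and Artin's theorem algebraises modifications of algebraic spaces that are isomorphisms over a dense open (here, over the generic fibre, where everything equals $T_K$). For case (ii), where $\Gamma$ is coarser than $\Xi$, the morphism goes $\mc{A}_\Xi \to \mc{A}_\Gamma$, and I would instead invoke Artin's existence-of-contractions theorem to contract the algebraic space $A_\Xi$ along the formal contraction data, producing $A_\Gamma$.

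Once $A_\Gamma$ exists as an algebraic space whose formal completion along the special fibre is $\mc{A}_\Gamma$, the remaining assertions are comparatively routine. Uniqueness follows from the fact that a proper algebraic space over $S$ is determined by its formal completion (Grothendieck's existence / the uniqueness clause in Artin's algebraisation), together with the identification of the generic fibre. Properness of $A_\Gamma \to S$: I would check it either by descent from $A_\Xi$ (which is proper by Theorem \ref{mumthm}) through the proper morphism relating them, using that properness is preserved and reflected along proper surjections, or directly via the valuative criterion using property (e) of Fact \ref{fac2.1} (the statement about valuations having a centre), which is precisely the combinatorial incarnation of properness for these toric-type models. Finally, the generic fibre computation $(A_\Gamma)_K = A_K$ follows because $P_{\Gamma,K} = T_K$ by Fact \ref{fac2.1}(a), so the $Y$-quotient of the generic fibre is $T_K/Y = \tilde G_K/Y = A_K$, matching the rigid-analytic uniformisation from diagram (\ref{diag1}).

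**The main obstacle.** I expect the hard part to be verifying the precise hypotheses of Artin's contraction/dilatation theorems — in particular producing the correct formal modification datum and checking the technical conditions (flatness, the modification being an isomorphism outside a closed subspace, the relevant finiteness and coherence conditions) that allow \cite{art2} to apply in the algebraic-space (rather than scheme) setting. This is exactly where the higher-dimensional case genuinely departs from the curve case: in dimension one the formal models algebraise directly to schemes, whereas here one cannot avoid algebraic spaces, and the combinatorics of comparing two admissible decompositions through their refinement must be organised carefully so that the morphisms of formal schemes satisfy Artin's axioms. A secondary subtlety is ensuring the constructions are compatible with the $Y$-action and glue correctly across the infinitely many $Y$-translates, so that the finitely-many-orbit condition of admissibility is what keeps the contraction/dilatation data of finite type.
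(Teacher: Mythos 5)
Your proposal follows essentially the same route as the paper: form the common refinement $\Xi\sqcap\Gamma$, apply Artin's existence-of-dilatations theorem to the leg $\mc{A}_{\Xi\sqcap\Gamma}\rightarrow\mc{A}_{\Xi}=\widehat{A_{\Xi}}$ and his existence-of-contractions theorem to the leg $\mc{A}_{\Xi\sqcap\Gamma}\rightarrow\mc{A}_{\Gamma}$, then deduce properness from $A_{\Xi}$ and the generic fibre from $P_{\Gamma,K}=T_K$. The ``main obstacle'' you correctly flag --- verifying that the maps $\mc{F}(\iota)$ are formal modifications in Artin's sense --- is exactly what the paper settles in a separate proposition beforehand (via the valuative criterion for properness on special fibres and the \'etale-localness of the notion of formal modification), so your outline matches the paper's proof in both structure and substance.
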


\begin{cor}\label{cor2.1}
There exits a functor
\begin{equation}\label{formular2}
M:\mr{PolDecom}_{Y}\rightarrow \{\text{proper algebraic}\; S\text{-spaces} \}
\end{equation}
from the category $\mr{PolDecom}_{Y}$ to the category of proper algebraic spaces over $S$, such that the functor $\mc{F}$ in (\ref{formular1}) factors through $M$.
\end{cor}
\begin{proof}
Let $H$ be a subgroup of finite index of $Y$, then $H$ is also a period lattice inside $T_K(K)$. Applying Theorem \ref{mainthm} to $H$ instead of $Y$, it is clear that we have a canonical functor $$M_H:\mr{PolDecom}_{Y,H}\rightarrow \{\text{proper algebraic}\; S\text{-spaces} \}$$
such that $\mc{F}:\mr{PolDecom}_{Y,H}\rightarrow \{\mc{S}\text{-formal schemes} \}$ factors through $M_H$. When $H$ runs in the set of subgroups of finite index of $Y$, we get functors $M_H$ which are compatible with each other. Hence they assemble into a functor
$$M:\mr{PolDecom}_{Y}\rightarrow \{\text{proper algebraic}\; S\text{-spaces} \}$$
with required properties.
\end{proof}

For $(H,\Gamma)\in \mr{PolDecom}_{Y}$, we denote the algebraic $S$-space $M((H,\Gamma))$ as $A_{H,\Gamma}$. In the case $H=Y$, we sometime use the simple notation $A_{\Gamma}$ when no confusion may arise.

The main ingredients of the proof of Theorem \ref{mainthm} are Artin's ``existence of contractions'' theorem and ``existence of dilatations'' theorem, see \cite[3.1, 3.2]{art2}. We start with the following proposition.

\begin{prop}\label{prop2.2}
Given a map $\iota:\Sigma\rightarrow\Gamma$ between two $Y$-admissible polytope decompositions of $E$ (note that $\iota$ has to be a $Y$-admissible subdivision), the morphism $$\mc{F}(\iota):\mc{A}_{\Sigma}\rightarrow\mc{A}_{\Gamma}$$
of $\mc{S}$-formal schemes is a formal modification in the sense of \cite[1.7]{art2}.
\end{prop}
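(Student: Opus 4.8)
The plan is to verify the conditions of Artin's definition \cite[1.7]{art2} for $f:=\mc{F}(\iota)$ one at a time, reducing each to the underlying equivariant toric morphism $\varphi\colon P_{\Sigma}\rightarrow P_{\Gamma}$ of $S$-schemes, then descending along the $Y$-action to the quotients $A_{\Sigma,n}\rightarrow A_{\Gamma,n}$ and passing to the colimit over $n$. First I would record the basic geometry of $\varphi$. Since $\iota$ realises $\Sigma$ as a $Y$-admissible subdivision of $\Gamma$, the map $\varphi$ is the toric morphism attached to a subdivision of polytope decompositions; by \cite[Chap. IV]{k-k-m-s} and \cite[Cor. 6.6]{mum1} (and the valuative criterion in form (e) of Fact \ref{fac2.1}, since $\Sigma$ and $\Gamma$ admit centres for exactly the same valuations) it is proper, and it restricts to an isomorphism over the open $T$-invariant subscheme of $P_{\Gamma}$ corresponding to those $\sigma\in\Gamma$ that are not subdivided. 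In particular $\varphi_K=\mr{id}_{T_K}$, so $\varphi$ is an isomorphism over the generic fibre and its non-isomorphism locus lies in the special fibre.

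Properness and the centres are then straightforward. Because the $Y$-action on $P_{\Sigma,n}$ and $P_{\Gamma,n}$ is free and properly discontinuous and the quotient maps are \'etale-locally isomorphisms on the charts $P_{\sigma,n}$, properness of $\varphi$ descends to proper morphisms $A_{\Sigma,n}\rightarrow A_{\Gamma,n}$ for every $n$, hence $f$ is proper as a morphism of formal schemes. For the centres, let $\Gamma_{0}\subset\Gamma$ be the set of polytopes left unsubdivided by $\iota$; I would take $\mc{T}\subset\mc{A}_{\Gamma}$ to be the closed complement of the (quotient of the) open $P_{\Gamma_{0}}$, and $\mc{T}':=f^{-1}(\mc{T})$. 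Since $\Sigma$ and $\Gamma$ are $Y$-admissible, they have only finitely many orbits, so $\mc{T}$ and $\mc{T}'$ are closed formal subschemes whose reductions are of finite type over $s$ and are supported in the special fibre, and $f$ induces an isomorphism $\mc{A}_{\Sigma}\setminus\mc{T}'\xrightarrow{\sim}\mc{A}_{\Gamma}\setminus\mc{T}$. This gives the ``isomorphism outside the centres'' clauses of \cite[1.7]{art2}, with the centres in the correct position.

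The remaining, and most delicate, conditions of \cite[1.7]{art2} are the direct-image conditions $\mathcal{O}_{\mc{A}_{\Gamma}}\xrightarrow{\sim}f_{*}\mathcal{O}_{\mc{A}_{\Sigma}}$ together with coherence and vanishing of the higher direct images along the centre. For this I would first establish $R\varphi_{*}\mathcal{O}_{P_{\Sigma}}=\mathcal{O}_{P_{\Gamma}}$ by the standard toric computation. The statement is local on $P_{\Gamma}$, so I would work on each affine chart $P_{\sigma}=\mr{Spec}\,A_{\sigma}$ with $A_{\sigma}=R[C(\sigma)^{\vee}\cap\mathbb{X}']$, whose preimage is the toric scheme of the subdivision of the cone $C(\sigma)$ induced by $\iota$, and compute the cohomology of the structure sheaf grading by grading over $\mathbb{X}'=\pi^{\Z}\oplus X$. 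For each graded component the cohomology is the reduced cohomology of the region of the subdivision on which the corresponding monomial is regular; this region is convex, hence contractible, so the positive-degree cohomology vanishes and the degree-zero part recovers $A_{\sigma}$ (cf. \cite[Appendix]{oda1}, \cite[Chap. IV]{k-k-m-s}); normality of $P_{\Gamma}$ gives $\varphi_{*}\mathcal{O}=\mathcal{O}$. I would then descend along the free $Y$-quotient to obtain $Rf_{n,*}\mathcal{O}_{A_{\Sigma,n}}=\mathcal{O}_{A_{\Gamma,n}}$ on each $S_{n}$, and conclude for the formal schemes via the theorem on formal functions: properness of $f$, together with the centres being of finite type (proper) over $s$, makes the formation of direct images commute with the $\pi$-adic completion.

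The hard part will be precisely this cohomological condition in the formal/adic setting: carrying out the combinatorial proof of $R\varphi_{*}\mathcal{O}=\mathcal{O}$, checking that it survives descent along the $Y$-action (using freeness and proper discontinuity so that no spectral-sequence contributions appear), and confirming its compatibility with the colimit over $n$, so that the outcome is exactly the hypothesis package of \cite[1.7]{art2} for $f$. The properness and centre conditions are comparatively formal once the geometry of the subdivision morphism is in hand.
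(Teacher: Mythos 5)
Your proposal is correct in outline but takes a genuinely different, and much heavier, route than the paper. Both arguments share the same skeleton---establish everything for the $Y$-covers $P_{\Sigma}\rightarrow P_{\Gamma}$ and descend along the \'etale quotients---but they treat Artin's conditions differently. The paper never computes cohomology: since $P_{\Sigma}\rightarrow P_{\Gamma}$ is an honest modification of $S$-schemes (proper, isomorphism on generic fibres), Artin's Corollary (1.15) in \cite{art2} says at once that the induced map $\mc{P}_{\Sigma}\rightarrow\mc{P}_{\Gamma}$ of formal completions is a formal modification, and the paper then descends this along the Cartesian square with \'etale rows $\mc{P}_{\bullet}\rightarrow\mc{A}_{\bullet}$, using that the notion of formal modification is \'etale-local on the base (a fact extracted from the proof of Artin's Proposition (1.13)). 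You instead verify Definition (1.7) directly, with $R\varphi_{*}\mathcal{O}_{P_{\Sigma}}=\mathcal{O}_{P_{\Gamma}}$ as the key input. This can be made to work---the graded \v{C}ech/convexity argument you sketch is standard and adapts to the monoid $\mathbb{X}'=\pi^{\Z}\oplus X$ over $R$---but note that you are proving more than Artin requires: his conditions only ask that the kernel and cokernel of $\mathcal{O}\rightarrow f_{*}\mathcal{O}$ and the sheaves $R^{q}f_{*}\mathcal{O}$ be annihilated by a power of an ideal of definition, and that already follows from coherence of $R^{q}\varphi_{*}\mathcal{O}$, the fact that its support lies in the exceptional locus inside the special fibre, $Y$-equivariance plus finiteness of orbits (to get a uniform power), and the formal-to-algebraic comparison theorem for proper morphisms; this packaging is exactly the content of Corollary (1.15), which is why the paper's cohomological work is zero. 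Two steps in your write-up also need to be made explicit: (i) your descent of properness presupposes that the square formed by $P_{\Sigma,n}\rightarrow P_{\Gamma,n}$ over $A_{\Sigma,n}\rightarrow A_{\Gamma,n}$ is Cartesian---true, because an equivariant map of \'etale $Y$-torsors over $A_{\Sigma,n}$ is an isomorphism, but without this, fppf descent of properness does not apply (the paper sidesteps the issue by running the valuative criterion by hand, lifting DVR-points to the covers); (ii) deducing the level-$n$ statements $Rf_{n,*}\mathcal{O}=\mathcal{O}$ from the statement over $R$ requires tor-independence of $P_{\Sigma}$ and $P_{\Gamma,n}$ over $P_{\Gamma}$ (available here since $P_{\Sigma}$ is $R$-flat), because $S_{n}\rightarrow S$ is not flat. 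In exchange, your route, if completed, yields a stronger explicit conclusion (actual cohomological triviality of $\mc{F}(\iota)$, not merely the formal-modification axioms), while the paper's route buys brevity and avoids precisely the combinatorial computation you yourself flag as the hard part.
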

\begin{proof}
We need to show that $\mc{F}(\iota)$ is proper and verifies the three conditions in \cite[Definition (1.7)]{art2}.

It is enough to show that $\mc{F}({\iota})_0:P_{\Sigma,0}/Y\rightarrow
P_{\Gamma,0}/Y$ is proper, and we use the valuative criterion. Given
any commutative diagram
\begin{equation}\label{diag4}
\xymatrix{
\eta\ar[r]^-{\alpha}\ar@{^{(}->}[d]_j &P_{\Sigma,0}/Y\ar[d]^{\mc{F}({\iota})_0} \\
V\ar[r]^-{\beta}       &P_{\Gamma,0}/Y
}
\end{equation}
with $V$ the spectrum of a discrete valuation ring and $\eta$ the
open point of $V$. Then there exists a $\bar{\sigma}\in\Sigma/Y$
(resp. $\bar{\gamma}\in\Gamma/Y$) such that $\alpha(\eta)$ (resp.
$\beta(\eta)$) lies in the corresponding $T_0$-orbit $O_{\bar{\sigma}}$ ($O_{\bar{\gamma}}$) in $P_{\Sigma,0}/Y$ (resp.
$P_{\Gamma,0}/Y$). Then we have that
$$\overline{O_{\bar{\sigma}}}=\bigcup_{\{\bar{\sigma}_i\in\Sigma/Y|\bar{\sigma}\subset\bar{\sigma}_i\}} O_{\bar{\sigma}_i},
\quad
\overline{O_{\bar{\gamma}}}=\bigcup_{\{\bar{\gamma}_j\in\Gamma/Y|\bar{\gamma}\subset\bar{\gamma}_j\}}
O_{\bar{\gamma}_j}$$ and the morphism $\beta$ factors through
$\overline{\beta(\eta)}$. Choose suitable liftings $\sigma_i$'s
(resp. $\gamma_j$'s) of $\bar{\sigma}_i$'s (resp.
$\bar{\gamma}_j$'s) such that all $\sigma_i$'s (resp.$\gamma_j$'s)
contain the lifting $\sigma$ (resp. $\gamma$) and
$\cup_i\sigma_i\subset\cup_j\gamma_j\subset E$. Then the diagram
(\ref{diag4}) lifts to a commutative diagram
\begin{equation}\label{diag5}
\xymatrix{
\eta\ar[r]^{\tilde{\alpha}}\ar@{^{(}->}[d]_j &P_{\Sigma,0}\ar[d] \\
V\ar[r]^{\tilde{\beta}}\ar@{-->}[ru]^{\tilde{\delta}}       &P_{\Gamma,0}
}
\end{equation}
in which the morphism $P_{\Sigma,0}\rightarrow P_{\Gamma,0}$ is
proper, hence $\tilde{\alpha}$ factors through some
$\tilde{\delta}$. And $\tilde{\delta}$ factors through
$\bigcup_{\{\sigma_i\}\subset\Sigma}O_{\sigma_i}$, so gives rise to
a morphism $\delta:V\rightarrow P_{\Sigma,0}/Y$. It's easy to see
that $\alpha$ factors through $\delta$. On the other hand if there
exists another morphism $\delta'$ such that $\alpha=\delta'\circ j$,
then we can lift $\delta'$ to a morphism
$\tilde{\delta}':V\rightarrow P_{\Sigma,0}$ such that
$\tilde{\alpha}=\tilde{\delta}'\circ j$. The properness of
$P_{\Sigma,0}\rightarrow P_{\Gamma,0}$ implies
$\tilde{\delta}'=\tilde{\delta}$, hence $\delta=\delta'$. Then the
properness of $\mc{F}(\iota)_0$ follows.

We have a morphism $P_{\Sigma}\rightarrow P_{\Gamma}$ of $S$-schemes induced by $\iota$. Let $\mc{P}_{\Sigma}:=\varinjlim_{n}P_{\Sigma,n}$ and $\mc{P}_{\Gamma}:=\varinjlim_{n}P_{\Gamma,n}$, then we have a morphism $\mc{P}_{\Sigma}\rightarrow\mc{P}_{\Gamma}$ of $\mc{S}$-formal schemes, which we still denote by $\iota$. By \cite[Corollary (1.15)]{art2}, $\iota$ is the formal modification induced by $P_{\Sigma}\rightarrow P_{\Gamma}$. We have the following Cartesian diagram
\begin{equation*}
\xymatrix{
\mc{P}_{\Sigma}\ar[r]\ar[d]_{\iota} &\mc{A}_{\Sigma}\ar[d]^{\mc{F}(\iota)}\\
\mc{P}_{\Gamma}\ar[r]                     &\mc{A}_{\Gamma}
}
\end{equation*}
with the rows \'etale coverings. Since the notion of formal modification is local on the base for the (formal) \'etale topology (see \cite[sixth line of the proof of Proposition (1.13)]{art2}), $\mc{F}(\iota)$ is a formal modification.
\end{proof}

\begin{proof}[Proof of Theorem \ref{mainthm}:]
Let $\Xi$ be as in Theorem \ref{mumthm}. We define $\Xi\sqcap\Gamma$ to be, similar as in \cite[5.2.15]{k-k-n1}, the set of polytopes of the form $\xi\cap\gamma$ for $\xi\in\Xi,\gamma\in\Gamma$. It is clear that $\Xi\sqcap\Gamma\in\mr{PolDecom}_{Y,Y}$, and we
 have the following diagram
 \begin{equation}\label{diag6}
  \xymatrix{
  &&\Xi\sqcap\Gamma\ar[dl]\ar[dr]  \\
  &\Xi & & \Gamma
  }
 \end{equation}
 in the category $\mr{PolDecom}_{Y,Y}$. Hence we get the following diagram
 \begin{equation}\label{diag7}
  \xymatrix{
  &&\mc{A}_{\Xi\sqcap\Gamma}\ar[dl]\ar[dr]  \\
  &\mc{A}_{\Xi} & & \mc{A}_{\Gamma}
  }
 \end{equation}
in the category of $\mc{S}$-formal schemes with the arrows being
formal modifications by Proposition \ref{prop2.2}. We know that $\mc{A}_{\Xi}$ algebraizes to an $S$-scheme $A_{\Xi}$ by Theorem \ref{mumthm}. By \cite[3.2]{art2} and \cite[3.1]{art2}, we get the following diagram
\begin{equation}\label{diag8}
  \xymatrix{
  &&A_{\Xi\sqcap\Gamma}\ar[dl]\ar[dr]  \\
  &A_{\Xi} & & A_{\Gamma}
  }
\end{equation}
of algebraic $S$-spaces, where $A_{\Xi\sqcap\Gamma}$ and $A_{\Gamma}$ are the algebraizations of $\mc{A}_{\Xi\sqcap\Gamma}$ and $\mc{A}_{\Gamma}$ respectively, and the two arrows are the corresponding modifications associated to the formal modifications in diagram (\ref{diag7}). The properness of $A_{\Gamma}$ over $S$ follows from the properness of $A_{\Xi}$ and the properness of the modifications. Since $(A_{\Xi})_K:=A_{\Xi}\times_{S}K$ is isomorphic to $A_K$, so is $(A_{\Gamma})_K$.
\end{proof}

Given a $Y$-admissible polytope decomposition $\Gamma$, and a subset $\bar{\Gamma}_0\subset\bar{\Gamma}:=\Gamma/Y$ which is stable under taking faces (i.e. $\bar{\gamma}\in\bar{\Gamma}$ and $\bar{\tau}\leq\bar{\gamma}$ imply that $\bar{\tau}\in\bar{\Gamma}$), we can associate an open algebraic subspace of $A_{\Gamma}$ to $\bar{\Gamma}_0$ as follows. Note that giving $\bar{\Gamma}_0$ is the same as giving a subset $\Gamma_0\subset\Gamma$ which is $Y$-stable and stable under taking faces. Then we have a closed subset
$$\tilde{B}=\bigcup_{\gamma\in\Gamma\backslash\Gamma_0} O_{\gamma}\subset P_{\Gamma}$$
and regard it as a reduced closed subscheme of $P_{\Gamma}$. Here $O_{\gamma}$ denotes the $T_0$-orbit corresponding to $\gamma$. The closed formal subscheme $\varinjlim_{n}(\tilde{B}\times_S S_n)/Y$ of $\varinjlim_{n}P_{\Gamma,n}/Y$ gives rise to a closed algebraic subspace $B$ of $A_{\Gamma}$. 

\begin{defn}\label{defn2.3}
The open algebraic subspace $A_{\Gamma,\bar{\Gamma}_0}$ of $A_{\Gamma}$ is defined to be the complement of the closed subspace $B$ of $A_{\Gamma}$. In the case that $\bar{\Gamma}_0=\{\bar{\tau}\,|\,\tau\leq\gamma\}$ for some $\gamma\in\Gamma$, we also use the notation $A_{\Gamma,\bar{\gamma}}$ for $A_{\Gamma,\bar{\Gamma}_0}$.
\end{defn}

\begin{rmk}\label{rmk2.1}
It is easy to see that $(A_{\Gamma,\bar{\gamma}})_{\bar{\gamma}\in\Gamma/Y}$ gives rise to a Zariski open covering of $A_{\Gamma}$.
\end{rmk}

\begin{prop}\label{prop2.3}
Let $\iota:\Sigma\rightarrow\Gamma$ be a map of $Y$-admissible polytope decompositions. Then for any subset $\bar{\Sigma}_0$ of $\bar{\Sigma}\cap\bar{\Gamma}$, which is stable under taking faces, $M(\iota)$ restricts to an isomorphism on $A_{\Sigma,\bar{\Sigma}_0}$.
\end{prop}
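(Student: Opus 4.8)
The plan is to descend the question to the toric schemes $P_\Sigma$ and $P_\Gamma$ sitting over the $Y$-quotients, and there to recognise the relevant morphism as an isomorphism over the affine charts attached to the common polytopes; this is the same mechanism used at the end of the proof of Proposition~\ref{prop1}, of which the present statement is a generalisation (the case treated there corresponding to the $Y$-orbit of the vertex $\{0\}$). First I would fix notation: let $\Sigma_0\subset\Sigma$ be the $Y$-stable, face-stable subset with image $\bar\Sigma_0$ in $\bar\Sigma$. Since $\bar\Sigma_0\subset\bar\Sigma\cap\bar\Gamma$, after choosing representatives every $\sigma\in\Sigma_0$ is a common polytope of $\Sigma$ and $\Gamma$; because $\Sigma$ subdivides $\Gamma$ and both are stable under taking faces (Definition~\ref{def2.1}(2)), the faces of such a $\sigma$ computed in $\Sigma$ and in $\Gamma$ coincide. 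Hence $\Sigma_0$ is also $Y$-stable and face-stable when viewed inside $\Gamma$, so it defines an open algebraic subspace $A_{\Gamma,\bar\Sigma_0}$ of $A_\Gamma$, which will be the target of the claimed isomorphism. Writing $U_\Sigma=\bigcup_{\sigma\in\Sigma_0}P_\sigma\subset P_\Sigma$ and $U_\Gamma=\bigcup_{\sigma\in\Sigma_0}P_\sigma\subset P_\Gamma$, the orbit description of a toric chart shows that $U_\Sigma$ is the union of the orbits $O_\sigma$ with $\sigma\in\Sigma_0$, that is $U_\Sigma=P_\Sigma\setminus\tilde B_\Sigma$ with $\tilde B_\Sigma=\bigcup_{\sigma\in\Sigma\setminus\Sigma_0}O_\sigma$, and likewise for $U_\Gamma$; thus $A_{\Sigma,\bar\Sigma_0}$ (resp. $A_{\Gamma,\bar\Sigma_0}$) is the algebraisation of $\varinjlim_n(U_\Sigma\times_S S_n)/Y$ (resp. $\varinjlim_n(U_\Gamma\times_S S_n)/Y$).

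The key geometric point is that for $\sigma\in\Sigma_0$ the affine chart $P_\sigma=\mr{Spec}\,R[C(\sigma)^{\vee}\cap\mathbb{X}']$ depends only on the polytope $\sigma$ and not on the ambient decomposition, so the subdivision morphism $P_\Sigma\to P_\Gamma$ restricts to the identity on $P_\sigma$. Taking the union over $\Sigma_0$, this morphism restricts to a $Y$-equivariant isomorphism $U_\Sigma\xrightarrow{\ \sim\ }U_\Gamma$, which descends to the quotients and shows, on special fibres, that $\mc F(\iota)_0$ restricts to an isomorphism over $(U_\Sigma\times_S s)/Y$. Exactly as in the concluding step of Proposition~\ref{prop1}, $M(\iota)$ is the identity on the generic fibre $A_K$ and restricts to $\mc F(\iota)_0$ on the special fibre; since $A_{\Sigma,\bar\Sigma_0}$ and $A_{\Gamma,\bar\Sigma_0}$ are the open complements of closed subspaces supported on the special fibre, I would conclude that $M(\iota)$ restricts to an isomorphism $A_{\Sigma,\bar\Sigma_0}\xrightarrow{\ \sim\ }A_{\Gamma,\bar\Sigma_0}$.

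The step I expect to require the most care is verifying that the preimage of $A_{\Gamma,\bar\Sigma_0}$ is exactly $A_{\Sigma,\bar\Sigma_0}$, so that one obtains an isomorphism onto, and not merely a morphism into, $A_\Gamma$. This reduces to the orbit-theoretic fact that under $P_\Sigma\to P_\Gamma$ the preimage of the orbit $O_\gamma$ is the union of the $O_\sigma$ for those $\sigma\in\Sigma$ whose relative interior lies in the relative interior of $\gamma$: for $\gamma\in\Sigma_0$ this preimage is $O_\gamma$ itself, since $\gamma$ is not subdivided, whereas for $\gamma\notin\Sigma_0$ no such $\sigma$ can lie in $\Sigma_0$. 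Hence $M(\iota)^{-1}(\tilde B_\Gamma)=\tilde B_\Sigma$ at the level of orbits, and this passes to the $Y$-quotient and to the algebraisations. The underlying toric computations are routine and I would not reproduce them in detail.
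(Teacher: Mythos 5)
Your toric groundwork is sound, and in places more explicit than the paper's own: the observation that the chart $P_\sigma=\mr{Spec}\,R[C(\sigma)^{\vee}\cap\mathbb{X}']$ depends only on the polytope $\sigma$ and not on the ambient decomposition, so that $P_\Sigma\to P_\Gamma$ restricts to the identity on the charts indexed by $\Sigma_0$, and the orbit computation showing that the preimage of $\tilde B_\Gamma$ is exactly $\tilde B_\Sigma$, are both correct. These are exactly the facts underlying the paper's proof, which reduces to $\bar\Sigma_0=\bar\Sigma\cap\bar\Gamma$ and notes that $M(\iota)_n$ coincides with $P_{\Sigma,n}/Y\to P_{\Gamma,n}/Y$, hence that $M(\iota)^{\wedge}$ restricts to an isomorphism over $\bigcup_{\sigma\in\Sigma\cap\Gamma}O_\sigma$.

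The gap is in your final inference, which is in fact the crux and the step the paper treats differently. From ``$M(\iota)_K$ is an isomorphism and $M(\iota)^{\wedge}$ restricts to an isomorphism over $\bigcup_{\sigma\in\Sigma_0}O_\sigma$'' you conclude that $M(\iota)$ restricts to an isomorphism of the open algebraic subspaces, justified only by analogy with the concluding step of Proposition~\ref{prop1}. But $A_{\Sigma,\bar\Sigma_0}$ and $A_{\Gamma,\bar\Sigma_0}$ are not proper over $S$ (they are complements of closed subspaces of proper spaces), so no Grothendieck-existence argument applies to them, and ``isomorphism on the generic fibre plus isomorphism on the formal completion'' does not formally imply ``isomorphism'' without a further argument; moreover the cited step of Proposition~\ref{prop1} is itself a terse jump of the same kind, so appealing to it does not discharge the obligation. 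The paper closes precisely this hole: the formal isomorphism gives isomorphisms on completed local rings at the points of $\bigcup_{\sigma\in\Sigma\cap\Gamma}O_\sigma$ and injectivity on those points, so $M(\iota)$ is \'etale and radicial on this locus (and on the generic fibre, where it is an isomorphism), and then \cite[17.9.1]{egaIV-4} shows it is an open immersion; combined with your preimage statement, which gives surjectivity onto $A_{\Gamma,\bar\Sigma_0}$, this yields the isomorphism. Note also that the step you flagged as the most delicate (the preimage identification) is actually the routine part; with the \'etale-plus-radicial argument inserted, your proof coincides with the paper's.
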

\begin{proof}
It is enough to consider the case $\bar{\Sigma}_0=\bar{\Sigma}\cap\bar{\Gamma}$. Since the restriction of $M(\iota)$ to $A_{\Sigma,\bar{\Sigma}_0}$ has set-theoretic image $A_{\Gamma,\bar{\Sigma}_0}$, we are further reduced to show that $M(\iota)|_{A_{\Sigma,\bar{\Sigma}_0}}$ is an open immersion. We make use of \cite[\href{http://stacks.math.columbia.edu/tag/0879}{Tag 0879}]{stacks-project} to achieve this. Firstly, the open subspace $A_K=A_{\Gamma}\times_SK$ of $A_{\Gamma}$ is clearly scheme theoretically dense, and the open immersion $A_K\rightarrow A_{\Gamma}$ is quasi-compact. Secondly, $M(\iota)_K:=M(\iota)\times_SK$ is the identity map of $A_K$. Thirdly, both $A_{\Sigma}$ and $A_{\Gamma}$ are proper over $S$, hence $M(\iota)$ is proper and in particular $M(\iota)|_{A_{\Sigma,\bar{\Sigma}_0}}$ is separated and of finite type; $A_{\Sigma}$ is flat over $S$, $M(\iota)|_{A_{\Sigma,\bar{\Sigma}_0}}$ is also flat by fibrewise criterion of flatness. Apply \cite[\href{http://stacks.math.columbia.edu/tag/0879}{Tag 0879}]{stacks-project} to $M(\iota)|_{A_{\Sigma,\bar{\Sigma}_0}}:A_{\Sigma,\bar{\Sigma}_0}\rightarrow A_{\Gamma}$, we have that $M(\iota)|_{A_{\Sigma,\bar{\Sigma}_0}}$ is an open immersion. 
\end{proof}

\begin{rmk}\label{rmk2.2}
Let $\gamma$ be a polytope of $E$ such that there exists a $Y$-admissible polytope decomposition $\Sigma$ containing $\gamma$. Then the algebraic space $A_{\Sigma,\bar{\gamma}}$ is independent of the choice of $\Sigma$ by Proposition \ref{prop2.3}. It is reasonable to denote $A_{\Sigma,\bar{\gamma}}$ by $A_{Y,\bar{\gamma}}$. The subindex $Y$ of $A_{Y,\bar{\gamma}}$ is aimed to indicate that: $\gamma$ is contained in some $Y$-admissible polytope decomposition of $E$; $A_{Y,\bar{\gamma}}$ has generic fibre $A_K$ which is the abelian varieties corresponding to the period lattice $Y$ inside $T_K$.
\end{rmk}

\begin{prop}\label{prop2.4}
Let $\Gamma$ be as in Theorem \ref{mainthm} with the additional condition $\{0\}\in\Gamma$. Then $A_{\Gamma}$ contains $G$ as an open algebraic subspace canonically, where $G$ is as in Theorem \ref{mumthm}.
\end{prop}
\begin{proof}
Let $\Xi$ be as in Theorem \ref{mumthm} such that $\{0\}\in\Xi$, then $A_{\Xi}$ contains $G$ as an open algebraic subspace canonically by Theorem \ref{mumthm}.
Let $\iota_1$ (resp. $\iota_2$) denote the subdivision $\Xi\sqcap\Gamma\rightarrow\Xi$ (resp. $\Xi\sqcap\Gamma\rightarrow\Gamma$) as in diagram (\ref{diag6}). Under our assumption we have $\{0\}\in\Xi\cap(\Xi\sqcap\Gamma)\cap\Gamma$, hence all the $Y$-translates $\{y\}$ of $\{0\}$ lie in $\Xi\cap(\Xi\sqcap\Gamma)\cap\Gamma$. Let 
$$Q_{\Xi}:=P_{\Xi}-\bigcup_{y\in Y}P_{\{y\}},\quad\mc{Z}_{\Xi}:=\varinjlim_{n}(Q_{\Xi}\times_{S}S_n)/Y,$$ 
$$Q_{\Xi\sqcap\Gamma}:=P_{\Xi\sqcap\Gamma}-\bigcup_{y\in Y}P_{\{y\}},\quad\mc{Z}_{\Xi\sqcap\Gamma}:=\varinjlim_{n}(Q_{\Xi\sqcap\Gamma}\times_{S}S_n)/Y,$$
$$Q_{\Gamma}:=P_{\Gamma}-\bigcup_{y\in Y}P_{\{y\}},\quad\mc{Z}_{\Gamma}:=\varinjlim_{n}(Q_{\Gamma}\times_{S}S_n)/Y,$$
here we regard $Q_{\Xi}$ (resp. $Q_{\Xi\sqcap\Gamma}$, resp. $Q_{\Gamma}$) as the reduced closed subscheme of $P_{\Xi}$ (resp. $P_{\Xi\sqcap\Gamma}$, resp. $P_{\Gamma}$) with underlying set $Q_{\Xi}$ (resp. $Q_{\Xi\sqcap\Gamma}$, resp. $Q_{\Gamma}$), and $\mc{Z}_{\Xi}$ (resp. $\mc{Z}_{\Xi\sqcap\Gamma}$, resp. $\mc{Z}_{\Gamma}$) is the closed formal subscheme of $\mc{A}_{\Xi}$ (resp. $\mc{A}_{\Xi\sqcap\Gamma}$, resp. $\mc{A}_{\Gamma}$) corresponding to $Q_{\Xi}$ (resp. $Q_{\Xi\sqcap\Gamma}$, resp. $Q_{\Gamma}$). By Grothendieck existence theorem \cite[Chap. 5, Sec. 6]{knu1}, $\mc{Z}_{\Xi}$ (resp. $\mc{Z}_{\Xi\sqcap\Gamma}$, resp. $\mc{Z}_{\Gamma}$) is the formal completion of a reduced closed algebraic subspace $Z_{\Xi}$ (resp. $Z_{\Xi\sqcap\Gamma}$, resp. $Z_{\Gamma}$) of $A_{\Xi}$ (resp. $A_{\Xi\sqcap\Gamma}$, resp. $A_{\Gamma}$). The algebraic subspace $Z_{\Xi}$ (resp. $Z_{\Xi\sqcap\Gamma}$, resp. $Z_{\Gamma}$) is supported on the special fibre, hence has the same support as $\mc{Z}_{\Xi}$ (resp. $\mc{Z}_{\Xi\sqcap\Gamma}$, resp. $\mc{Z}_{\Gamma}$).

Now both $M(\iota_1)$ and $M(\iota_2)$ restrict to an isomorphism over $A_{\Xi\sqcap\Gamma}-Z_{\Xi\sqcap\Gamma}$ by Proposition \ref{prop2.3}, hence $A_{\Gamma}-Z_{\Gamma}=A_{\Xi\sqcap\Gamma}-Z_{\Xi\sqcap\Gamma}=A_{\Xi}-Z_{\Xi}=G$.
\end{proof}

Now we describe some examples of models associated to certain polytope decompositions which are going to be used in later sections.
\begin{ex}\label{ex1}
Choose suitable basis for $X$ and $Y$ such that the pairing (\ref{eq1.2}) is given by a diagonal matrix $\mr{diag}(n_1,\cdots,n_d)$ \footnote{Note that this does not imply that the corresponding matrix for the pairing (\ref{eq1.1}) is diagonal. Actually if we can make the matrix diagonal under some choice of basis, then $A_K$ is isomorphic to a product of some Tate curves.}. Then $Y$ sits in $E$ as the lattice $$\mathop{\oplus}_i n_i\Z e_i$$ via the embedding $Y\hookrightarrow E$, where $e_1,\cdots,e_d$ is the basis of $E$ corresponding to the chosen basis of $X$. Consider the admissible polytope decomposition $\Sigma_{\Box^d}$ given by all the $Y$-translates of the faces of $\Box^d$, where $\Box^d$ is the $d$-cube with vertices $a_1e_1+\cdots+a_de_d$, $a_i\in\{0,n_i\}$. We get an algebraic space $A_{\Sigma_{\Box^d}}$ associated to $\Sigma_{\Box^d}$, and denote it also by $A_Y$.

Now we construct a model for $A_K\times A_K$. Consider the lattice $$Y\times Y=(\mathop{\oplus}_in_i\Z e_i)\oplus(\mathop{\oplus}_in_i\Z e_i)$$ in $E\times E$, let $\Box^{2d}$ be the $2d$-cube with vertices $$(a_1e_1+\cdots+a_de_d,b_1e_1+\cdots+b_d e_d)$$
with $a_i,b_i\in\{0,n_i\}$ for all $i$. Let $X_1,\cdots,X_d,Y_1,\cdots,Y_d$ be the sequence of coordinates of $E\times E$ with respect to the basis $e_1,\cdots,e_d$ of $E$. The hyperplanes $X_i-Y_i=0$ for $i=1,\cdots,d$, divide $\Box^{2d}$ into $2^d$ polytopes
$$\boxslash_{\underline{u}}=\Box^{2d}\cap(\bigcap_i H_{u_i}),$$ where $\underline{u}=(u_1,\cdots,u_d)\in\{0,1\}^d$ and $H_{u_i}$ denotes the half-space $X_i- Y_i\geq 0$ (resp. $X_i- Y_i\leq 0$) if $u_i=0$ (resp. $u_i=1$). Taking the $Y\times Y$-translates of the faces of the
$\boxslash_{\underline{u}}$'s, we denote the resulted $Y\times Y$-admissible polytope decomposition by $\Sigma_{\boxslash^{2d}}$ and the associated model of
$A_K\times A_K$ by $(A_Y\times_S A_Y)_{\boxslash^{2d}}$. Apparently we have a morphism of algebraic spaces
$$(A_Y\times_S A_Y)_{\boxslash^{2d}}\rightarrow A_Y\times_S A_Y$$
over $S$ coming from the above subdivision of $Y\times Y$-admissible polytope decompositions. Under the linear map $E\times E\rightarrow E, (a,b)\mapsto -a+b$, any $Y\times Y$-translate of a $\boxslash_{\underline{u}}$ is mapped into some $Y$-translate of $\Box^d$. It follows that we have a morphism
$$m_{-}:(A_Y\times_S A_Y)_{\boxslash^{2d}}\rightarrow A_Y$$
of algebraic spaces over $S$.

Similarly, we divide the polytope $\Box^{2d}$ into $2^d$ polytopes $\boxbackslash_{\underline{u}}$ by the hyperplanes $X_i+Y_i=n_i$ for $i=1,\cdots,d$, hence get a $Y\times Y$-admissible polytope decomposition $\Sigma_{\boxbackslash^{2d}}$ and its associated proper model $(A_Y\times_S A_Y)_{\boxbackslash^{2d}}$ together with morphisms 
$$(A_Y\times_S A_Y)_{\boxbackslash^{2d}}\rightarrow A_Y\times_S A_Y$$
and 
$$m_{+}:(A_Y\times_S A_Y)_{\boxbackslash^{2d}}\rightarrow A_Y,$$
where the corresponding map $E\times E\rightarrow E$ for the second morphism is given by $(a,b)\mapsto a+b$ for $a,b\in E$. 
\end{ex}

\begin{prop}\label{prop2.5}
The natural open immersion $G\times_S A_Y\hookrightarrow A_Y\times_S A_Y$ factors canonically as
\begin{equation*}
 \xymatrix{G\times_S A_Y\ar@{^{(}->}[rr]\ar@{^{(}->}[rd] & &A_Y\times_S A_Y \\
                      &(A_Y\times_S A_Y)_{\boxslash^{2d}}\ar[ru]
 }
\end{equation*}
and
\begin{equation*}
 \xymatrix{G\times_S A_Y\ar@{^{(}->}[rr]\ar@{^{(}->}[rd] & &A_Y\times_S A_Y \\
                      &(A_Y\times_S A_Y)_{\boxbackslash^{2d}}\ar[ru]
 }.
\end{equation*}
\end{prop}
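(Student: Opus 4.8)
The plan is to realise both morphisms $(A_Y\times A_Y)_{\boxslash}\to A_Y\times_S A_Y$ and $(A_Y\times A_Y)_{\boxbackslash}\to A_Y\times_S A_Y$ as instances of a modification $M(\iota)$ attached to a subdivision $\iota\colon\Sigma\to\Gamma$, where $\Gamma=\Sigma_{\Box^d}\times\Sigma_{\Box^d}$ is the product decomposition of $E\times E$ and $\Sigma$ is the decomposition by the $Y\times Y$-translates of the faces of the $\boxslash_{\underline{u}}$ (respectively the $\boxbackslash_{\underline{u}}$). My strategy is to apply the proposition immediately preceding Example \ref{ex1}: since $M(\iota)$ restricts to an isomorphism over $A_{\Sigma,\bar{\Sigma}_0}$ for every face-closed $\bar{\Sigma}_0\subseteq\bar{\Sigma}\cap\bar{\Gamma}$, it suffices to exhibit $G\times_S A_Y$ as such an open subspace that is common to both decompositions.

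First I would describe $G\times_S A_Y$ intrinsically. Because $\Sigma_{\Box^d}\times\Sigma_{\Box^d}$ is the product decomposition, $P_{\Sigma_{\Box^d}}\times_S P_{\Sigma_{\Box^d}}=P_{\Gamma}$, and taking the $Y\times Y$-quotient gives $A_Y\times_S A_Y=A_{Y\times Y,\Gamma}$. In the totally degenerate case $G=T$, and by \ref{prop1} the torus $G$ is exactly the open subspace of $A_Y$ associated with the single $Y$-orbit $\{\bar 0\}$ of the vertex $\{0\}\in\Sigma_{\Box^d}$. Hence $G\times_S A_Y=A_{\Gamma,\bar{\Gamma}_0}$, where $\bar{\Gamma}_0$ is the face-closed set of $Y\times Y$-orbits of the polytopes $\{y\}\times\tau$ with $y\in Y$ and $\tau\in\Sigma_{\Box^d}$.

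The crux is the combinatorial claim that each polytope $\{y\}\times\tau$ (with $y\in Y$, $\tau\in\Sigma_{\Box^d}$) is also a cell of $\Sigma$, so that $\bar{\Gamma}_0\subseteq\bar{\Sigma}\cap\bar{\Gamma}$. I expect this to be the only real obstacle. I would prove it by freezing the first block of coordinates at a lattice point: if $a=y\in Y$, so each $a_i\in n_i\Z$, then on the vertex-face $\{a=y\}$ of a translate of $\boxslash_{\underline{u}}$ each cutting condition $a_i\ge b_i$ or $a_i\le b_i$ either becomes vacuous (leaving $b_i$ free in $[0,n_i]$) or pins $b_i$ to an endpoint, so the resulting face is exactly $\{y\}\times\tau'$ for a face $\tau'$ of $\Box^d$. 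Varying $\underline{u}$, the vertex $y$, and the $Y\times Y$-translate then produces every $\{y\}\times\tau$; the $\boxbackslash_{\underline{u}}$ case is identical with the analogous cutting hyperplanes.

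Granting the claim, the proposition preceding Example \ref{ex1}, applied with $\bar{\Sigma}_0=\bar{\Gamma}_0$, shows that $M(\iota)$ restricts to an isomorphism $A_{\Sigma,\bar{\Gamma}_0}\xrightarrow{\ \sim\ }A_{\Gamma,\bar{\Gamma}_0}=G\times_S A_Y$. Its inverse is an open immersion $G\times_S A_Y\hookrightarrow (A_Y\times A_Y)_{\boxslash}$ whose composite with $M(\iota)$ is the given open immersion, which yields the first triangle; the same argument for $\boxbackslash$ gives the second. The factorisations are canonical because each is built from the canonical restriction of the corresponding $M(\iota)$.
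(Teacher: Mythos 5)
Your proof is correct, but it is packaged differently from the paper's. The paper disposes of this proposition in one line: it redoes the argument of Proposition \ref{prop1}, with the $Y\times Y$-translates of the cells $\{0\}\times\tau$ (for $\tau$ a face of $\Box^d$) playing the role that the $Y$-translates of $\{0\}$ played there --- i.e.\ one forms the reduced complements $Q$, algebraizes the associated closed formal subspaces via Grothendieck existence, and compares the modification on generic and special fibres to identify the complements on both sides with $G\times_S A_Y$. You instead invoke the (unlabelled) proposition immediately preceding Example \ref{ex1} --- that $M(\iota)$ restricts to an isomorphism on $A_{\Sigma,\bar{\Sigma}_0}$ for any face-closed $\bar{\Sigma}_0\subseteq\bar{\Sigma}\cap\bar{\Gamma}$ --- applied to the subdivision $\iota$ of the product decomposition $\Sigma_{\Box^d}\times\Sigma_{\Box^d}$ by the translates of the faces of the $\boxslash_{\underline{u}}$ (resp.\ $\boxbackslash_{\underline{u}}$), together with Proposition \ref{prop1} to identify $G\times_S A_Y$ with the open subspace attached to the orbits of the $\{y\}\times\tau$. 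The combinatorial core is the same in both proofs and you verify it correctly: $\{0\}\times\Box^d$ is cut out of $\boxslash_{(1,\dots,1)}$ (resp.\ out of the piece where $a_i+b_i\leq n_i$ for all $i$) by the supporting hyperplanes $a_i=0$, so all translates of $\{0\}\times\tau$ are common cells. What your route buys is that it avoids repeating the Grothendieck-existence/closed-complement argument, reusing instead a result already proved; what it costs is the routine verifications, which you gloss over at the same level of rigor as the paper itself, that $A_Y\times_S A_Y$ is the model attached to the product decomposition and that the image of the restricted $M(\iota)$ is exactly $A_{\Gamma,\bar{\Gamma}_0}$ (checked on generic and special fibres, since an open subspace is determined by its underlying open subset). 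One slip you should correct: the parenthetical claim ``in the totally degenerate case $G=T$'' is false --- $G$ is the semiabelian scheme over $S$ with $G_K=A_K$ proper, while $T_K$ is affine; only their formal completions along the special fibre agree. This does not damage your argument, because the identification you actually use, namely that $G$ is the open subspace of $A_Y$ attached to the single orbit $\{\bar{0}\}$, comes from Proposition \ref{prop1} and not from the assertion $G=T$.
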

\begin{proof}
The proof is similar to the proof of Proposition \ref{prop2.4} by considering all the $Y\times Y$-translates of $\{0\}\times\tau$'s for all faces $\tau$ of $\Box^d$, instead of considering all the $Y$-translates of $\{0\}$.
\end{proof}

We define a morphism $\rho$ as the composition of
$$G\times_S A_Y\hookrightarrow(A_Y\times_S A_Y)_{\boxbackslash^{2d}}\xrightarrow{m_{+}} A_Y,$$
then the morphism $\rho$ fits into the following commutative diagram
\begin{equation}
\xymatrix{G\times_S G\ar[rr]^{m_G}\ar@{^{(}->}[d] &&G\ar@{^{(}->}[d] \\
G\times_S A_Y\ar[rr]^{\rho}\ar@{^{(}->}[rd] && A_Y  \\
&(A_Y\times_S A_Y)_{\boxbackslash^{2d}}\ar[ru]_{m_{+}}
},
\end{equation}
where $m_G$ denotes the group law on $G$. The diagram suggests that we may expect $\rho$ to be a group action. This is indeed the case, and we will prove this after example \ref{ex2}. 

\begin{ex}\label{ex2}
Let the notation be as in Example \ref{ex1}. Now we construct some models for $A_K\times A_K\times A_K$. Consider the lattice 
$$Y\times Y\times Y=(\mathop{\oplus}_in_i\Z e_i)\oplus(\mathop{\oplus}_in_i\Z e_i)\oplus(\mathop{\oplus}_in_i\Z e_i)$$ 
in $E\times E\times E$, let $\Box^{3d}$ be the $3d$-cube with vertices
$$(a_1e_1+\cdots+a_de_d,b_1e_1+\cdots+b_d e_d,c_1e_1+\cdots+c_d e_d)$$
with $a_i,b_i,c_i\in\{0,n_i\}$. The $Y\times Y\times Y$-translates of the faces of $\Box^{3d}$ give rise to a $Y\times Y\times Y$-admissible polytope decomposition of $E\times E\times E$, and we denote it by $\Sigma_{\Box^{3d}}$. The model associated to $\Sigma_{\Box^{3d}}$ is just $A_Y\times_S A_Y\times_S A_Y$.

Let $X_1,\cdots,X_d,Y_1,\cdots,Y_d,Z_1,\cdots,Z_d$ be the sequence of coordinates of $E\times E\times E$ with respect to the basis $e_1,\cdots,e_d$ of $E$. By cutting $\Box^{3d}$ with the hyperplanes 
$$X_i+Y_i+Z_i=n_i,X_i+Y_i+Z_i=2n_i,X_i+Y_i=n_i,Y_i+Z_i=n_i$$
with $i$ varying from 1 to $d$, we get a subdivision of $\Box^{3d}$. Taking the $Y\times Y\times Y$-translates of this subdivision, we get a $Y\times Y\times Y$-admissible polytope decomposition of $E\times E\times E$ and denote it by $\Sigma_{\boxasterisk^{3d}}$. We denote the model associated to $\Sigma_{\boxasterisk^{3d}}$ by $(A_Y\times_S A_Y\times_S A_Y)_{\boxasterisk^{3d}}$. The decomposition $\Sigma_{\boxasterisk^{3d}}$ is clearly a subdivision of $\Sigma_{\Box^{3d}}$, whence a canonical morphism $(A_Y\times_S A_Y\times_S A_Y)_{\boxasterisk^{3d}}\rightarrow A_Y\times_S A_Y\times_S A_Y$.

The polytope decompositions $\Sigma_{\boxasterisk^{3d}},\Sigma_{\boxbackslash^{2d}}$ and $\Sigma_{\Box^d}$ are compatible with the commutativity of the diagram 
$$\begin{CD}
E\times E\times E @>{+_E\times 1_E}>> E\times E\\
@V{1_E\times +_E}VV @VV{+_E}V\\
E\times E @>{+_E}>> E \quad,
\end{CD}$$
where $+_E$ denotes the addition of $E$. Hence we get a commutative diagram
$$\begin{CD}
(A_Y\times_S A_Y\times_S A_Y)_{\boxasterisk^{3d}} @>{m_{+,12}}>> (A_Y\times_S A_Y)_{\boxbackslash^{2d}}   \\
@V{m_{+,23}}VV @VV{m_{+}}V\\
(A_Y\times_S A_Y)_{\boxbackslash^{2d}} @>{m_{+}}>> A_Y \quad .
\end{CD}$$
Similar as in Proposition \ref{prop2.5}, we have a canonical factorization
\begin{equation*}
 \xymatrix{G\times_S G\times_S A_Y\ar@{^{(}->}[rr]\ar@{^{(}->}[rd] & &A_Y\times_S A_Y\times_S A_Y \\
                      &(A_Y\times_S A_Y\times_S A_Y)_{\boxasterisk^{3d}}\ar[ru]
 }
\end{equation*}
with the two hooked arrows open immersions. Furthermore, the open immersions $G\times_S G\times_S A_Y\hookrightarrow (A_Y\times_S A_Y\times_S A_Y)_{\boxasterisk^{3d}}$ and $G\times_S A_Y\hookrightarrow (A_Y\times_S A_Y)_{\boxbackslash^{2d}}$ are compatible with the ``partial addition'' morphisms $m_{+,12}$ and $m_{+,23}$, in the sense that they fit into the following commutative diagram
\begin{equation}\label{diag9}
 \xymatrix{
 G\times_S G\times_S A_Y\ar[rr]^{m_G\times 1_{A_Y}}\ar@{^{(}->}[rd]\ar[dd]_{1_G\times\rho}&&G\times_S A_Y\ar@{^{(}->}[rd] &\\
   &(A_Y\times_S A_Y\times_S A_Y)_{\boxasterisk^{3d}}\ar[rr]^{m_{+,12}}\ar[dd]_{m_{+,23}} &&(A_Y\times_S A_Y)_{\boxbackslash^{2d}}\ar[dd]^{m_{+}}  \\
   G\times_S A_Y\ar@{^{(}->}[rd] &&&  \\
   &(A_Y\times_S A_Y)_{\boxbackslash^{2d}}\ar[rr]^{m_{+}} && A_Y
 }.
\end{equation}
\end{ex}

\begin{prop}\label{prop2.6}
The morphism $\rho:G\times_S A_Y\rightarrow A_Y$ defines a $G$-action on $A_Y$.
\end{prop}
\begin{proof}
The compatibility axiom for group action follows from the commutativity of the diagram (\ref{diag9}). We are left to check the role of the identity section $e_G:S\rightarrow G$, i.e. to check the commutativity of the following diagram
\begin{equation*}
\xymatrix{G\times_S A_Y \ar[r]^{\rho} &A_Y   \\
A_Y\ar[u]^{e_G\times 1_{A_Y}}\ar@{=}[ru]
}.
\end{equation*}
But $\rho\circ(e\times 1_{A_Y})=1_{A_Y}$ formally and $A_Y$ is proper over $S$, hence $\rho\circ (e\times 1_{A_Y})=1_{A_Y}$ by \cite[\href{http://stacks.math.columbia.edu/tag/0A4Z}{Tag 0A4Z}]{stacks-project}. 
\end{proof}

\begin{rmk}\label{rmk2.3}
For the model $A_{\Sigma}$ associated to a $Y$-admissible polytope decomposition $\Sigma$ with $\{0\}\in\Sigma$, we could ask if the translation action of $G$ on itself extends to $A_{\Sigma}$. The proposition \ref{prop2.6} offers an affirmative answer for the special case $A_Y$. For 1-dimensional case, this is known, see \cite{d-r1}. For the case that the decomposition $\Sigma$ provides an ample line bundle to $A_{\Sigma}$, the answer is mentioned to be yes in \cite[5.7.1]{ale1}, but the author could not find the arguments for proving this there. 
\end{rmk}

\section{The canonical logarithmic structure on toroidal models}
First of all, we investigate the algebraic space $A_{\Sigma}$ in more detail. 
\begin{lem}\label{lem3.1}
Let $A_{\sigma}$ and $P_{\sigma}=\mr{Spec}A_{\sigma}$ be as in Fact \ref{fac2.1} (b), let $\hat{A}_{\sigma}$ be the $\pi$-adic completion of $A_{\sigma}$, $\tilde{P}_{\sigma}=\mr{Spec}\hat{A}_{\sigma}$, and $\mc{P}_{\sigma}=\mr{Spf}\hat{A}_{\sigma}$. Then we have:
\begin{enumerate}[(i)]
\item The formal scheme $\mc{P}_{\sigma}$ is normal and Cohen-Macaulay, and the ring $\hat{A}_{\sigma}$ is normal and Cohen-Macaulay;
\item If $P_{\sigma}$ is regular, so are $\tilde{P}_{\sigma}$ and $\mc{P}_{\sigma}$.
\end{enumerate}
\end{lem}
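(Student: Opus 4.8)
The plan is to reduce the statement to standard facts about affine semigroup rings and to the behaviour of the properties \emph{normal}, \emph{Cohen--Macaulay} and \emph{regular} under $\pi$-adic completion. First I would record the structure of the ring itself. By Gordan's lemma the semigroup $M:=C(\sigma)^\vee\cap\mathbb{X}'$ is finitely generated, so $A_\sigma=R[M]$ is a finitely generated $R$-algebra, in particular Noetherian; since $R$ is a complete discrete valuation ring it is excellent, hence so is $A_\sigma$. As $C(\sigma)^\vee$ is saturated in $\mathbb{X}'$, the semigroup $M$ is normal, so $A_\sigma$ is normal (which is in any case part of \ref{fac2.1}, since $P_\Sigma$ is normal) and, by Hochster's theorem on normal semigroup rings, Cohen--Macaulay. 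Concretely one may also argue that $A_\sigma=R\otimes_\Z\Z[M]$ is free, hence flat, over the Cohen--Macaulay ring $R$ with Cohen--Macaulay fibres $k[M]$ and $K[M]$, which forces $A_\sigma$ to be Cohen--Macaulay; and if $P_\sigma$ is regular then $A_\sigma$ is regular by definition.

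Next I would pass to the $\pi$-adic completion. The map $A_\sigma\to\hat A_\sigma$ is flat, and excellence of $A_\sigma$ makes its formal fibres geometrically regular, so this is a regular homomorphism. The properties normal, Cohen--Macaulay and regular all ascend along a flat homomorphism whose fibres have the corresponding property (a standard ascent property along regular homomorphisms; see EGA IV or Matsumura's \emph{Commutative Ring Theory}); since geometrically regular fibres are in particular normal, Cohen--Macaulay and regular, it follows that $\hat A_\sigma$ is normal and Cohen--Macaulay, and regular whenever $A_\sigma$ is. As $\hat A_\sigma$ is the coordinate ring of $\tilde P_\sigma$, this settles the assertions for $\tilde P_\sigma$ in (i) and (ii).

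Finally, for the formal scheme $\mc P_\sigma=\mr{Spf}\,\hat A_\sigma$ I would argue via completed local rings. The completed local ring of $\mc P_\sigma$ at a closed point $x$ agrees with that of $\tilde P_\sigma$ at the corresponding point of the special fibre, and both equal the $\mathfrak m$-adic completion $\widehat{(A_\sigma)_{\mathfrak m}}$ at the closed point $\mathfrak m\supseteq(\pi)$ of $P_\sigma$, because $\mathfrak m$-adic completion is insensitive to the prior $\pi$-adic completion. Regularity and the Cohen--Macaulay property are preserved and reflected by completion of a Noetherian local ring, and normality is preserved because the (excellent) local rings of $A_\sigma$ are analytically normal; hence $\mc P_\sigma$ is normal, Cohen--Macaulay, and regular when $P_\sigma$ is.

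I expect the one genuinely delicate point to be the behaviour of \emph{normality} under completion. Unlike the Cohen--Macaulay and regular properties, normality is not automatically preserved by $I$-adic completion, and the argument really uses the excellence of $A_\sigma$ (equivalently, the geometric regularity of the formal fibres) to guarantee that $\hat A_\sigma$ and the completed local rings defining $\mc P_\sigma$ remain normal. The remaining ingredients---finite generation of $M$, the Cohen--Macaulayness of normal semigroup rings, and the identification of the local rings of $\mc P_\sigma$---are routine.
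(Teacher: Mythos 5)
The serious problem is your description of the ring $A_{\sigma}$: it is \emph{not} $R\otimes_{\Z}\Z[M]=R[M]$ for $M=C(\sigma)^{\vee}\cap\mathbb{X}'$. In Fact \ref{fac2.1}(b) the lattice is $\mathbb{X}'=\pi^{\Z}\oplus X$, meaning that the first coordinate of a monoid element $(n,x)$ is realised inside $K[X]$ as the power $\pi^{n}$ of the uniformiser: $A_{\sigma}$ is the sub-$R$-algebra of $K[X]$ spanned by the monomials $\pi^{n}\ms{X}^{x}$ with $(n,x)\in M$, equivalently $A_{\sigma}\cong R[M]/(\ms{X}^{(1,0)}-\pi)$, the monoid element $(1,0)$ being identified with $\pi\in R$. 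In particular $A_{\sigma}$ is not a semigroup ring, so Hochster's theorem does not apply to it verbatim, and both fibres you name are wrong: the generic fibre is the torus $T_{K}=\mr{Spec}K[X]$ (this is Fact \ref{fac2.1}(a); if $A_{\sigma}$ were $R[M]$ there would be no degeneration at all), and the special fibre is $k[M]/(\ms{X}^{(1,0)})$, not $k[M]$. The conclusion you want is still true, but needs a patch: $R[M]$ is Cohen--Macaulay (it is flat over $R$ with fibres $K[M]$ and $k[M]$, which are Cohen--Macaulay by Hochster), and $\ms{X}^{(1,0)}-\pi$ is a nonzerodivisor in the domain $R[M]$, so the quotient $A_{\sigma}$ is Cohen--Macaulay. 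It is worth noting that the paper itself never justifies Cohen--Macaulayness of $\mc{O}_{P_{\sigma},x}$ --- its proof silently assumes it when invoking EGA IV 7.8.3(v) --- so supplying this input is a genuine improvement, but it must be supplied correctly.

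Once that input is fixed, your transfer arguments are essentially sound, though they differ in mechanism from the paper's. For $\hat{A}_{\sigma}$ you ascend along $A_{\sigma}\to\hat{A}_{\sigma}$, using the fact that the $I$-adic completion of an excellent (more generally, G-) ring is a regular homomorphism; this is a genuine theorem, not a restatement of the definition of excellence, and should be cited as such. The paper instead argues pointwise and in the opposite direction: at each $x\in\mc{P}_{\sigma}$ it forms the chain $\mc{O}_{P_{\sigma},x}\to\mc{O}_{\tilde{P}_{\sigma},x}\to\mc{O}_{\mc{P}_{\sigma},x}\to\hat{\mc{O}}_{P_{\sigma},x}$, shows the last ring is normal and Cohen--Macaulay (regular in case (ii)) by excellence of $\mc{O}_{P_{\sigma},x}$, and then \emph{descends} these properties along the faithfully flat arrows ([g-m, 3.1.2] for flatness, Matsumura 21.E for descent). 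Your treatment of $\mc{P}_{\sigma}$ via completed local rings is the same idea, but two points need attention. First, the lemma concerns all points of the formal scheme, and the stalk at a non-closed point of $\mr{Spf}\hat{A}_{\sigma}$ is not simply a localization of a stalk at a closed point, so you cannot reduce to closed points as you would for a scheme of finite type over a field; fortunately your argument runs verbatim at an arbitrary point, since the stalk of $\mc{P}_{\sigma}$ at any $x$ has the same $\mathfrak{m}$-adic completion as $\mc{O}_{P_{\sigma},x}$. Second, to get normality of the stalk of $\mc{P}_{\sigma}$ you need normality to \emph{descend} from its completion, not merely that the completion is normal; that descent along the faithfully flat map from the stalk to its completion is exactly the step the paper's references handle and your ``hence'' elides.
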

\begin{proof}
For any point $x\in \mc{P}_{\sigma}$, we have a sequence of local homomorphisms of noetherian local rings
$$\mc{O}_{P_{\sigma},x}\xrightarrow{\delta}\mc{O}_{\tilde{P}_{\sigma},x}\xrightarrow{\lambda}\mc{O}_{\mc{P}_{\sigma},x}\xrightarrow{\mu}\hat{\mc{O}}_{\tilde{P}_{\sigma},x}=\hat{\mc{O}}_{P_{\sigma},x} \,,$$
where the completion $\hat{\cdot}$ means the $\pi$-adic completion. Both $\lambda$ and $\mu$ are faithful flat, see \cite[3.1.2]{g-m}. The composition $\mu\circ\lambda\circ\delta$ is just the canonical homomorphism from $\mc{O}_{P_{\sigma},x}$ to its completion.

As a complete discrete valuation ring, $R$ is an excellent ring, see \cite[7.8.3 (iii)]{egaIV-2}. Since $A_{\sigma}$ is a finitely generated $R$-algebra, it is excellent, so is $\mc{O}_{P_{\sigma},x}$, see \cite[7.8.3 (ii)]{egaIV-2}. Then by \cite[7.8.3 (v)]{egaIV-2}, we have $\hat{\mc{O}}_{P_{\sigma},x}$ is normal and Cohen-Macaulay, and it is also regular if $A_{\sigma}$ is regular at $x$. Since both $\mu$ and $\lambda$ are faithful flat, we have that $\mc{O}_{\mc{P}_{\sigma},x}$ and $\mc{O}_{\tilde{P}_{\sigma},x}$ are normal and Cohen-Macaulay by \cite[21.E]{mat1}, and they are also regular if $\mc{O}_{P_{\sigma},x}$ is. Hence (i) and (ii) follow.
\end{proof}

\begin{cor}\label{cor3.1}
The formal scheme $\mc{A}_{\Sigma}$ is normal and Cohen-Macaulay. It is also regular if $\Sigma$ is regular.
\end{cor}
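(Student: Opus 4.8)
The plan is to present $\mc{A}_{\Sigma}$ as the quotient of the formal scheme $\mc{P}_{\Sigma}:=\varinjlim_{n}P_{\Sigma,n}$ by a free, properly discontinuous $Y$-action, and then to transport the conclusions of the preceding lemma across the quotient map. Normality, the Cohen-Macaulay property and regularity are all properties of the noetherian local rings $\mc{O}_{\mc{A}_{\Sigma},\bar x}$ of the formal scheme, and each of them descends along a faithfully flat local homomorphism (this is the descent direction of the criteria of \cite[21.E]{mat1}, whose ascent direction was already invoked in the lemma). So it will suffice to compare the local rings of $\mc{A}_{\Sigma}$ with those of the completed charts $\mc{P}_{\sigma}$, which the lemma has already controlled.

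First I would check that the quotient morphism $q_{n}\colon P_{\Sigma,n}\to A_{\Sigma,n}$ is \'etale and surjective. By \ref{fac2.1}(f) the isomorphism $S_{y}$ carries the torus orbit $O_{\tau}$ onto $O_{y+\tau}$; since $Y$ acts freely on $E$ by translations we have $y+\tau\neq\tau$ whenever $y\neq 0$, so no nontrivial element of $Y$ fixes a point and the action is free. Combined with the intersection pattern $P_{\sigma,n}\cap P_{\tau,n}=P_{\sigma\cap\tau,n}$ (respectively $\emptyset$) from \ref{fac2.1}(c), which is precisely the input behind the existence of the quotient established earlier, this shows the $Y$-action is properly discontinuous, so that $q_{n}$ is \'etale, in fact a $Y$-torsor for the \'etale topology. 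Taking the colimit over $n$ gives an \'etale surjection $q\colon\mc{P}_{\Sigma}\to\mc{A}_{\Sigma}$ of locally noetherian formal schemes, and $\mc{P}_{\Sigma}$ is covered by the formal affine charts $\mc{P}_{\sigma}$.

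With $q$ in hand, for a point $\bar x$ of $\mc{A}_{\Sigma}$ and any lift $\tilde x\in\mc{P}_{\sigma}\subset\mc{P}_{\Sigma}$ the induced local homomorphism $\mc{O}_{\mc{A}_{\Sigma},\bar x}\to\mc{O}_{\mc{P}_{\sigma},\tilde x}$ is faithfully flat (flat because $q$ is \'etale, and faithful because $q$ is surjective). The preceding lemma shows $\mc{O}_{\mc{P}_{\sigma},\tilde x}$ is normal and Cohen-Macaulay, so by descent $\mc{O}_{\mc{A}_{\Sigma},\bar x}$ is normal and Cohen-Macaulay. If moreover $\Sigma$ is regular then every cone $C(\sigma)$ is regular and hence every $P_{\sigma}$ is regular; the lemma then makes each $\mc{P}_{\sigma}$ regular, and descent yields the regularity of $\mc{A}_{\Sigma}$.

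I expect the only real obstacle to be the first step, namely pinning down that the $Y$-action is free and properly discontinuous in the scheme-theoretic sense, so that $q$ is genuinely an \'etale surjection rather than merely a set-theoretic quotient. Freeness on torus orbits is immediate from \ref{fac2.1}(f), but one must also record the \'etale-local triviality of $q$; both facts are built into the construction of $A_{\Sigma,n}$ and the earlier existence-of-quotient proposition, so that once they are cited the descent of normality, Cohen-Macaulayness and regularity becomes routine.
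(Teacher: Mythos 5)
Your proof is correct and takes essentially the same route as the paper: the paper's proof simply observes that normality, Cohen--Macaulayness and regularity are \'etale-local properties and so can be checked on the charts $\mathrm{Spf}\,\hat{A}_{\sigma}$, which is exactly the content of the preceding lemma. Your write-up merely makes explicit what the paper leaves implicit, namely that the quotient map $\mc{P}_{\Sigma}\to\mc{A}_{\Sigma}$ is an \'etale surjection (by freeness and proper discontinuity of the $Y$-action) and that the three properties descend along the resulting faithfully flat local homomorphisms.
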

\begin{proof}
Since normality, regularity and being Cohen-Macaulay are all local properties for \'etale topology, we are reduced to check for $\mr{Spf}A_{\sigma}$ for $\sigma\in\Sigma$, which follows from Lemma \ref{lem3.1}.
\end{proof}

\begin{prop}\label{prop3.1}
The algebraic space $A_{\Sigma}$ is normal and Cohen-Macaulay, and it is also regular if $\Sigma$ is regular.
\end{prop}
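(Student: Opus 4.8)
The plan is to verify the three properties étale-locally and to separate $A_\Sigma$ into its generic fibre and the formal neighbourhood of its special fibre, since normality, regularity and being Cohen--Macaulay are all étale-local properties of algebraic spaces. Concretely, I would choose an étale presentation $U\to A_\Sigma$ by a scheme $U$, so that it suffices to check each property at every point of $U$. The points of $U$ split into those lying over the generic point of $S$ and those lying over the closed point $s$, and these two loci must be treated by completely different means, because the formal scheme $\mc A_\Sigma$ only records information along the special fibre.

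First I would dispose of the generic fibre. As $\mr{Spec}K$ is open in $S$, theorem \ref{mainthm} identifies $A_K=A_\Sigma\times_S K$ with an open subspace of $A_\Sigma$, and $A_K$ is an abelian variety, hence smooth over $K$ and in particular regular. So at every point of $U$ over the generic point of $S$ the local ring is regular, thus \emph{a fortiori} normal and Cohen--Macaulay, and nothing remains to prove there. The real content is at the special fibre. For $x\in U$ over $s$ we have $\pi\in\mathfrak m_x$, and the local ring of the formal completion $\mc A_\Sigma=\varinjlim_n A_{\Sigma,n}$ at the image of $x$ is exactly the $\pi$-adic completion $\widehat{\mc O}_{U,x}=\varprojlim_n\mc O_{U,x}/\pi^{n+1}$ (the base change of $\hat U$ to the chart). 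The preceding corollary says $\mc A_\Sigma$ is normal and Cohen--Macaulay, and regular when $\Sigma$ is regular; hence $\widehat{\mc O}_{U,x}$ has the corresponding property.

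It then remains to transfer the property back along the completion map $\mc O_{U,x}\to\widehat{\mc O}_{U,x}$. This map is flat, being the adic completion of a noetherian local ring, and it is moreover \emph{local}: since $\pi$ lies in the Jacobson radical of the $\pi$-adically complete ring $\widehat{\mc O}_{U,x}$ and $\widehat{\mc O}_{U,x}/\pi=\mc O_{U,x}/\pi$ is local, the ring $\widehat{\mc O}_{U,x}$ is itself local and $\mathfrak m_x$ maps into its maximal ideal. A flat local homomorphism is faithfully flat, so I may descend: regularity and Cohen--Macaulayness descend along flat local homomorphisms, and normality (that is, $R_1$ together with $S_2$) descends along faithfully flat homomorphisms, by the standard faithfully flat descent results (see \cite[IV, \S6.5]{egaIV-2} and the accompanying statements). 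Hence $\mc O_{U,x}$ is normal and Cohen--Macaulay, and regular if $\Sigma$ is regular. Combining the two loci gives the claim for $A_\Sigma$. The step I expect to require the most care is precisely this passage from the formal scheme to the algebraic space: one must not forget that $\mc A_\Sigma$ sees only the special fibre, so that the generic-fibre points are genuinely handled separately via smoothness of $A_K$, and one must check that the $\pi$-adic completion map is honestly faithfully flat (via locality of the completed ring) before invoking descent.
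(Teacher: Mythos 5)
Your overall architecture matches the paper's: check the properties étale-locally, dispose of the generic fibre via regularity of the abelian variety $A_K$, and handle points of the special fibre by descending from the completed side (the preceding corollary) along a faithfully flat map. The descent step itself is sound — normality, Cohen--Macaulayness and regularity all descend along flat local (hence faithfully flat) homomorphisms, which is the same input the paper takes from \cite[21.E]{mat1}.

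The gap is in how you feed the corollary into that descent. You assert that the stalk of the formal scheme $\mc{A}_{\Sigma}$ at the image of $x$ ``is exactly'' the $\pi$-adic completion $\widehat{\mc{O}}_{U,x}$, and from this conclude that $\widehat{\mc{O}}_{U,x}$ is normal, Cohen--Macaulay, and regular when $\Sigma$ is. That identification is false: the stalk of the structure sheaf of a locally noetherian formal scheme is not adically complete in general; it sits strictly between $\mc{O}_{U,x}$ and $\widehat{\mc{O}}_{U,x}$. (This is visible in the paper's own Lemma 3.1, whose chain $\mc{O}_{P_{\sigma},x}\to\mc{O}_{\tilde{P}_{\sigma},x}\to\mc{O}_{\mc{P}_{\sigma},x}\to\hat{\mc{O}}_{P_{\sigma},x}$ has faithfully flat, but not bijective, maps between the formal stalk and the complete local ring.) What the corollary actually provides, transferred along the étale map $\mathrm{Spf}\hat{B}\to\mc{A}_{\Sigma}$ with $\hat{B}$ the $\pi$-adic completion of the chart $B$, is that the adic ring $\hat{B}$ (equivalently, the stalks of the formal scheme) is normal and Cohen--Macaulay. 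Passing from that to the complete local ring $\widehat{\mc{O}}_{U,x}$ is an \emph{ascent} along a completion homomorphism, which requires that homomorphism to be regular — i.e.\ an excellence hypothesis on $\hat{B}$ — an input you neither state nor verify, and precisely the kind of statement the argument is supposed to circumvent. The fix is small and preserves your structure: let $\mathfrak{p}\subset B$ be the prime of $x$ and $\mathfrak{q}\subset\hat{B}$ the prime corresponding to $\mathfrak{p}$ under $\hat{B}/\pi\hat{B}\cong B/\pi B$; then $B_{\mathfrak{p}}\to\hat{B}_{\mathfrak{q}}$ is flat and local, hence faithfully flat, and $\hat{B}_{\mathfrak{q}}$ inherits all three properties from $\hat{B}$ by localization, so you may descend along this map instead of along $\mc{O}_{U,x}\to\widehat{\mc{O}}_{U,x}$. (The paper's variant of the same fix routes the descent through the Zariski ring $(1+(\pi))^{-1}B$, whose $\pi$-adic completion is again $\hat{B}$, and then notes that inverting $1+(\pi)$ does not change localizations at primes containing $\pi$.)
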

\begin{proof}
Since $(A_{\Sigma})_K$ is an abelian variety over $K$, it is a regular scheme. We are left to consider the points in $A_{\Sigma}\backslash (A_{\Sigma})_K$. For any $x\in A_{\Sigma}\backslash (A_{\Sigma})_K$, choose an open affine \'etale neighborhood $(U,u)$, with $U=\mr{Spec}B$ and $u$ lying over $x$, it suffices to investigate $u$ in $U$. 

Let $\hat{B}$ be the $\pi$-adic completion of $B$, and $\mc{U}=\mr{Spf}\hat{B}$. We have a canonical \'etale morphism $\mc{U}\rightarrow \mc{A}_{\Sigma}$ associated to $U$. Since $\mc{A}_{\Sigma}$ is normal and Cohen-Macaulay, so $\mc{U}$ is normal and Cohen-Macaulay, and the ring $\hat{B}$ is normal and Cohen-Macaulay. Consider the following commutative diagram
\begin{equation}
\xymatrix{
B\ar[rr]\ar[rd] & &\hat{B}  \\\
                &(1+(\pi))^{-1}B\ar[ru]
}.
\end{equation}
Since $\pi$ is contained in the Jacobson radical of $(1+(\pi))^{-1}B$, we have that $(1+(\pi))^{-1}B$ is normal and Cohen-Macaulay by \cite[7.8.3 (v)]{egaIV-2}. In particular, $B$ is normal and Cohen-Macaulay at $u$. It follows that the algebraic space $A_{\Sigma}$ is normal and Cohen-Macaulay. The regularity part can be proven by a similar argument.
\end{proof}

Now we define a canonical log structure $M$ (resp. $\mc{M}$) on $A_{\Sigma}$ (resp. $\mc{A}_{\Sigma}$) by letting
\begin{equation}\label{logstr}
M(U)=\{f\in\mc{O}_{A_{\Sigma}}(U)|f\in (\mc{O}_{A_{\Sigma}}(U)\otimes_RK)^{\times}\}
\end{equation}
\begin{equation}
(\mr{resp}.\quad \mc{M}(U)=\{f\in\mc{O}_{\mc{A}_{\Sigma}}(U)|f\in (\mc{O}_{\mc{A}_{\Sigma}}(U)\otimes_RK)^{\times}\})
\end{equation}
for any open $U$ of $(A_{\Sigma})_{\text{\'et}}$ (resp. $(\mc{A}_{\Sigma})_{\text{\'et}}$). This makes $A_{\Sigma}$ (resp. $\mc{A}_{\Sigma}$) into a log algebraic space (resp. log formal scheme) over the log scheme $S$ (resp. the log formal scheme $\mc{S}$). We have a canonical morphism 
\begin{equation}
\iota:\mc{A}_{\Sigma}\rightarrow A_{\Sigma}
\end{equation}
of ringed spaces. This further gives a morphism
\begin{equation}\label{sitesmorph}
(\mc{A}_{\Sigma})_{\text{\'et}}\rightarrow (A_{\Sigma})_{\text{\'et}}
\end{equation}
of small \'etale sites. Here for the definition of a morphism between sites, we refer to \cite[\href{http://stacks.math.columbia.edu/tag/00X0}{Tag 00X0}, Def. 7.14.1]{stacks-project}, for the proof of (\ref{sitesmorph}) being a morphism of sites, we make use of \cite[\href{http://stacks.math.columbia.edu/tag/00X0}{Tag 00X0}, Lem. 7.14.5]{stacks-project} (the category $(\mc{I}^u_V)^{\text{opp}}$ in \cite[\href{http://stacks.math.columbia.edu/tag/00X0}{Tag 00X0}, Lem. 7.14.5]{stacks-project} is obviously filtered in our case). Hence the pullback functor
$$\iota^{-1}:(\mc{A}_{\Sigma})_{\text{\'et}}^{\sim}\rightarrow (A_{\Sigma})_{\text{\'et}}^{\sim}$$
is exact.

Note that the log structures $M$ and $\mc{M}$ are defined in a similar way, and actually they are closely related along the morphism $\iota$. The structure morphism $M\rightarrow\mc{O}_{A_{\Sigma}}$ gives rise to 
$$\iota^{-1}M\rightarrow\iota^{-1}\mc{O}_{A_{\Sigma}}\rightarrow\mc{O}_{\mc{A}_{\Sigma}},$$
and let $\beta$ be the composition. It is easy to see that $\beta$ factors through $\mc{M}\rightarrow\mc{O}_{\mc{A}_{\Sigma}}$, in other words we have the following commutative diagram 
\begin{equation}
\xymatrix{
\iota^{-1}M\ar[d]\ar@{^{(}->}[r]\ar[rd]^{\beta}&\iota^{-1}\mc{O}_{A_{\Sigma}}\ar[d]   \\
\mc{M}\ar@{^{(}->}[r] &\mc{O}_{\mc{A}_{\Sigma}}
}.
\end{equation}
Let $\iota^*M$ be the inverse image of the log structure $M$, see \cite[1.4]{kat1} for the definition of the inverse image of log structure. By the definition of $\iota^*M$, we get a canonical homomorphism $\eta$
\begin{equation}
\xymatrix{
\beta^{-1}(\mc{O}_{\mc{A}_{\Sigma}}^{\times})\ar@{^{(}->}[r]\ar[d] &\iota^{-1}M\ar[d]\ar[rdd]  \\
\mc{O}_{\mc{A}_{\Sigma}}^{\times}\ar[r]\ar[rrd] &\iota^*M\ar@{.>}[rd]^{\eta}   \\
&&\mc{M}
}
\end{equation}
by the universal property of pushout. And $\eta$ is actually a homomorphism of log structures on $\mc{A}_{\Sigma}$.

\begin{prop}\label{prop3.2}
The canonical homomorphism $\eta$ is an isomorphism of log structures on $\mc{A}_{\Sigma}$.
\end{prop}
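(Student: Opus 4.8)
The plan is to verify that $\eta$ is an isomorphism on stalks at the geometric points of $\mc{A}_{\Sigma}$. Since $\iota^{-1}$ is exact and the formation of the associated log structure commutes with pullback, the stalk of $\iota^*M$ at a geometric point $\bar{x}$ is the pushout $M_{\bar{x}}\oplus_{\mc{O}_{A_{\Sigma},\bar{x}}^{\times}}\mc{O}_{\mc{A}_{\Sigma},\bar{x}}^{\times}$, the stalk of $\mc{M}$ is $\mc{M}_{\bar{x}}=\{g\in\mc{O}_{\mc{A}_{\Sigma},\bar{x}}\mid g\in(\mc{O}_{\mc{A}_{\Sigma},\bar{x}}\otimes_RK)^{\times}\}$, and $\eta_{\bar{x}}$ sends a class $[f,u]$ to $(\text{image of }f)\cdot u$. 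Writing $R_1=\mc{O}_{A_{\Sigma},\bar{x}}$ and $R_2=\mc{O}_{\mc{A}_{\Sigma},\bar{x}}$, the first lemma of this section shows that $R_1\to R_2$ is a faithfully flat local homomorphism of normal local domains identifying $R_2$ with (a strict henselization of) the $\pi$-adic completion of $R_1$; in particular $R_2/\pi R_2=R_1/\pi R_1$, and faithfully flat descent of ideals gives $R_1=R_2\cap\mr{Frac}(R_1)$. It then suffices to check $\eta_{\bar{x}}$ is bijective.

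Injectivity is the easy half. If $f_1u_1=f_2u_2$ in $R_2$ with $f_i\in M_{\bar{x}}$ and $u_i\in R_2^{\times}$, then $w:=f_1/f_2$ lies in $\mr{Frac}(R_1)$ and equals $u_2/u_1\in R_2^{\times}$, so both $w$ and $w^{-1}$ lie in $R_2\cap\mr{Frac}(R_1)=R_1$; hence $w\in R_1^{\times}$, and moving $w$ across the pushout relation yields $[f_1,u_1]=[f_2,u_2]$.

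For surjectivity I would use the toric description. \'Etale-locally $A_{\Sigma}$ is modelled on $P_{\sigma}=\mr{Spec}A_{\sigma}$ with $A_{\sigma}=R[Q]$, $Q=C(\sigma)^{\vee}\cap\mathbb{X}'$; write $\chi^q$ ($q\in Q$) for the monomial $\pi^n\ms{X}^{\beta}$ attached to $q$. Since the generic fibre $P_{\sigma,K}=T_K$ is a torus (Fact \ref{fac2.1}(a)), the units of $A_{\sigma}\otimes_RK$ are exactly the monomials, so by the normality results above $M_{\bar{x}}=R_1^{\times}\cdot\chi^{Q}$, and $\iota^*M$ has chart $Q$ over $R_2$. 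Thus it is enough to prove $\mc{M}_{\bar{x}}=R_2^{\times}\cdot\chi^{Q}$. Given $g\in\mc{M}_{\bar{x}}$, its Weil divisor on the normal domain $R_2$ is effective and, as $g$ is a unit after inverting $\pi$, supported on the height-one primes over $(\pi)$; because the generic fibre is the torus, all toric boundary divisors $D_1,\dots,D_r$ of $P_{\sigma}$ lie in the special fibre and these account for all such primes. If one produces $q\in Q$ with $\mr{ord}_{D_j}(\chi^q)=\mr{ord}_{D_j}(g)$ for all $j$, then $g\chi^{-q}$ has trivial divisor, hence is a unit of $R_2$ by normality, and $g=\eta([\chi^q,\,g\chi^{-q}])$.

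The main obstacle is precisely this last step: transporting the divisorial data across the completion and realising $\mr{div}(g)$ by a character. One must check that the minimal primes over $(\pi)$ in $R_2$ correspond bijectively to the $D_j$ with matching divisorial valuations on monomials, which follows from $R_2/\pi R_2=R_1/\pi R_1$ together with the normality of both rings (each relevant localization is a DVR restricting correctly to $R_1$); and then that the principal divisor $\mr{div}(g)=\sum_ja_jD_j$ is the divisor of a character, i.e.\ that $(a_j)_j$ lies in the image of the map $\mathbb{X}'\to\bigoplus_j\Z$, $q\mapsto(\mr{ord}_{D_j}\chi^q)_j$. For the affine toric scheme $P_{\sigma}$ the cokernel of this map is the divisor class group and a toric-supported divisor is principal if and only if it is the divisor of a character; the real work is to carry this equivalence from $A_{\sigma}$ to $\hat{A}_{\sigma}$, for which the faithful flatness of $A_{\sigma}\to\hat{A}_{\sigma}$ and the agreement of special fibres are the decisive inputs. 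Bijectivity of $\eta_{\bar{x}}$ at every $\bar{x}$ then gives that $\eta$ is an isomorphism.
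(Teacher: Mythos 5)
Your injectivity argument is sound: faithfully flat descent along the local homomorphism $R_1\rightarrow R_2$ gives $bR_2\cap R_1=bR_1$, hence $R_2\cap\mr{Frac}(R_1)=R_1$, which is the same mechanism the paper uses. The gap is in surjectivity, and it sits exactly where you write ``the real work is to carry this equivalence from $A_{\sigma}$ to $\hat{A}_{\sigma}$'': that transfer is not a deferrable verification, it \emph{is} the proposition, and your stated ``decisive inputs'' do not suffice as given. Concretely, two things are missing. First, you need that the divisorial (reflexive) ideal $I_{D'}\subset R_1$ of the descended cycle $D'=\sum_j a_jD_j'$ satisfies $I_{D'}R_2=(g)$; matching valuations at the height-one primes over $(\pi)$ is not enough, since you must also know that $I_{D'}R_2$ is again divisorial (e.g.\ because reflexivity is preserved under the flat base change $R_1\rightarrow R_2$ and the two ideals agree in codimension one, or via \cite[21.7.2]{egaIV-4} as the paper does). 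Second, you need principality to descend from $R_2$ to $R_1$. Faithful flatness together with agreement of special fibres does \emph{not} do this in general: for a Dedekind domain $B$ with nontrivial class group and $I=(f)$, a non-principal prime containing $f$ becomes principal in the $(f)$-adic completion, even though its support lies in $V(I)$. What saves the argument is either that your $R_1\rightarrow R_2$ is a \emph{local} homomorphism, so Nakayama preserves the minimal number of generators of an ideal, or, in the paper's non-local setting, that $(1+I)^{-1}B$ is a Zariski ring, which is the content of \cite[24.E]{mat1}. Neither mechanism appears in your sketch.

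For comparison, the paper avoids the toric combinatorics entirely: it reduces the proposition, via the exact sequences of log structures and the isomorphism $\overline{\iota^*M}\cong\iota^{-1}(\overline{M})$, to a self-contained lemma: for a noetherian normal domain $B$, an ideal $I$, and a nonzerodivisor $f\in\hat{B}$ with $V(f)\subseteq V(I)$, one has Zariski-locally $f=g_ju_j$ with $g_j\in B_j$ and $u_j\in\hat{B}_j^{\times}$. Its proof (descend the cycle of $f$ to $\mr{Spec}B$, observe that its ideal $I_D$ is open so that $(f)=I_D\hat{B}$ by \cite[21.7.2]{egaIV-4}, then descend principality using \cite[24.E]{mat1}) is precisely the step you leave open. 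Note also that once this is in place, your character combinatorics becomes unnecessary: if $I_{D'}=(h)$ for some $h\in R_1$, then automatically $h\in M_{\bar{x}}$ (its divisor is supported on $V(\pi)$ and $R_1$ is normal) and $g=uh$ with $u\in R_2^{\times}$, which is all that surjectivity of $\eta_{\bar{x}}$ requires. So the divisor-class-group statement you single out as the main obstacle should be bypassed rather than solved: the argument never needs $\mr{div}(g)$ to be the divisor of a character, only to be locally principal over $R_1$.
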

\begin{proof}
The homomorphism $\eta$ fits naturally into the following commutative diagram
\begin{equation}
\xymatrix{
1\ar[r] &\mc{O}_{\mc{A}_{\Sigma}}^{\times}\ar[r]\ar@{=}[d] &\iota^*M\ar[r]\ar[d]^\eta  &\overline{\iota^*M}\ar[r]\ar[d]^{\bar{\eta}} &1  \\
1\ar[r] &\mc{O}_{\mc{A}_{\Sigma}}^{\times}\ar[r]  &\mc{M}\ar[r]  &\overline{\mc{M}}\ar[r] &1
}
\end{equation}
with exact rows, where $\overline{\iota^*M}:=\iota^*M/\mc{O}^{\times}_{\mc{A}_{\Sigma}}$, $\overline{\mc{M}}:=\mc{M}/\mc{O}^{\times}_{\mc{A}_{\Sigma}}$. Note that here the sequence $1\rightarrow T'\xrightarrow{u} T\xrightarrow{v} T''\rightarrow 1$ being exact means that $T'$ is a subsheaf of groups of $T$ such that $T''$ is the associated quotient in the category of sheaves of monoids which is not an abelian category. To prove $\eta$ is an isomorphism, it is enough to show that $\bar{\eta}$ is an isomorphism. By \cite[3.3]{kat.fum1}, we have a canonical isomorphism $\overline{\iota^*M}\cong\iota^{-1}(\overline{M})$, where $\overline{M}:=M/\mc{O}^{\times}_{A_{\Sigma}}$. So we are left to prove that $\iota^{-1}(\overline{M})$ is isomorphic to $\overline{\mc{M}}$ under $\bar{\eta}$, and the injectivity and the surjectivity follow from part (1) and part (2) of the following Lemma \ref{lem3.2} respectively.
\end{proof}

\begin{lem}\label{lem3.2}
Let $B$ be a noetherian normal domain, $I$ an ideal in $B$, and $\hat{B}$ the $I$-adic completion of $B$. 
\begin{enumerate}[(1)]
\item Let $g_1,g_2\in B$ such that $V(g_1)\cup V(g_2)\subseteq V(I)$ and $g_1/g_2\in\hat{B}^{\times}$. Here $V(\cdot)$ denotes the vanishing set. Then we have $g_1/g_2\in B^{\times}$.
\item Let $f$ be an element in $\hat{B}$ which is not a zero-divisor, such that $V(f)\subseteq V(I\hat{B})=V(I)$. Then we can find a Zariski covering $\{U_j=\rm{Spec}B_j\}_{j\in J}$ of the scheme $U=\rm{Spec}B$, such that $f=g_ju_j$ on $\hat{B_j}$ for some $g_j\in B_j$ and $u_j\in \hat{B_j}^{\times}$. 
\end{enumerate} 
\end{lem}
\begin{proof}
First of all, we show part (1). Since $V(g_1)\cup V(g_2)\subseteq V(I)$ and $g_1/g_2\in\hat{B}^{\times}$, the principal Weil divisor for $g_1/g_2$ is zero. Hence $g_1/g_2\in B_{\mathfrak{p}}^{\times}$ for any prime ideal $\mathfrak{p}$ of height 1 of $B$. By \cite[Chap. 7, \S 17, Thm. 38]{mat1}, we get $g_1/g_2\in B^{\times}$.

Now we show part (2). The element $f$ defines an effective principal Cartier divisor, hence a closed subscheme of $\rm{Spec}\hat{B}$, see \cite[21.2.12]{egaIV-4}, which further gives a codim 1 cycle $D=\sum_i n_iD_i$. Now we regard $D$ as a codim 1 cycle on $\rm{Spec}B$, and by \cite[21.7.2]{egaIV-4} we get a closed subscheme $Y(D)$ of $U$. Let $I_D$ be the sheaf of ideals of $Y(D)$. Set-theoretically, $Y(D)$ is contained in $V(I)$, hence we have $I^r\subseteq I_D$ for some integer $r$ big enough. Then $I_D$ is an open ideal. It follows that $B/I_D\cong\hat{B}/I_D\hat{B}$, and $I_D\hat{B}$ gives rise to the cycle $D$ too. Again by \cite[21.7.2]{egaIV-4}, we get $(f)=I_D\hat{B}$.

Consider the following commutative diagram of $I$-adic rings
\begin{equation}
\xymatrix{
B\ar[rr]\ar[rd] & &\hat{B}  \\\
                &B'\ar[ru]
},
\end{equation}
where $B'$ denotes the ring $(1+I)^{-1}B$ which is a Zariski ring. Since $(f)=I_D\hat{B}=(I_DB')\hat{B}$, we have that $I_DB'$ is principal by \cite[24.E (i)]{mat1}. It is obvious that $I_D$ is principal on $U\backslash V(I)$, we only need to show that $I_D$ is principal at $B_{\mathfrak{p}}$ for all prime ideals $\mathfrak{p}\in V(I)$. But for $\mathfrak{p}\supseteq I$ we have $\mathfrak{p}\cap (1+I)=\emptyset$, hence $I_DB'$ being principal implies that $I_DB_{\mathfrak{p}}$ is principal.
\end{proof}

By Proposition \ref{prop3.2}, we could describe explicitly the pullback of the log structure $M$ on $A_{\Sigma}$ to $A_{\Sigma}\times_S S_n$.
\begin{cor}\label{cor3.2}
The pullback of the log structure $M$ on $A_{\Sigma}$ to $A_{\Sigma}\times_S S_n$ admits \'etale local charts $C(\sigma)^{\vee}\cap\mathbb{X}\longrightarrow R[C(\sigma)^{\vee}\cup\mathbb{X}]/(\pi^{n+1})$ with $\sigma$ varying in $\Sigma$.
\end{cor}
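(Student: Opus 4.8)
The plan is to trace the log structure $M$ through the algebraisation and completion morphisms and then read off a chart from the explicit torus-embedding description in Fact~\ref{fac2.1}. First I would reduce the problem to the formal model: the log structure $M$ on $A_{\Sigma}$ was defined precisely so that its pullback $\iota^*M$ along $\iota:\mc{A}_{\Sigma}\rightarrow A_{\Sigma}$ is isomorphic to $\mc{M}$ by the preceding proposition. Pulling back further along $S_n\hookrightarrow S$ (equivalently, restricting $\mc{A}_{\Sigma}$ to the level-$n$ thickening $A_{\Sigma,n}=P_{\Sigma,n}/Y$), it therefore suffices to produce the asserted chart for the restriction of $\mc{M}$ to $P_{\Sigma,n}/Y$. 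Since charts and the log structure itself are local for the \'etale topology, and since the quotient map $P_{\Sigma,n}\rightarrow P_{\Sigma,n}/Y$ is an \'etale covering, I can work \'etale-locally upstairs on a single affine piece $P_{\sigma,n}=\mr{Spec}\,R[C(\sigma)^{\vee}\cap\mathbb{X}']/(\pi^{n+1})$ for $\sigma\in\Sigma$.

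The key computation is then to identify $\mc{M}$ on such an affine chart with the natural log structure coming from the monoid $C(\sigma)^{\vee}\cap\mathbb{X}'$. By the defining formula~(\ref{logstr}), $\mc{M}$ consists of those functions that become invertible after inverting $\pi$ (equivalently, after passing to the generic fibre $T_K$). The generic fibre of $P_{\sigma}$ is the torus $T_K$, on which the monomials $\mathbb{X}^{m}$ for $m\in\mathbb{X}'$ are invertible. Thus every monomial $\mathbb{X}^{m}$ with $m\in C(\sigma)^{\vee}\cap\mathbb{X}'$ lies in $\mc{M}$, giving a monoid homomorphism $C(\sigma)^{\vee}\cap\mathbb{X}'\rightarrow\mc{M}$, which I would then show is a chart. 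Concretely, I must check that the induced map $(C(\sigma)^{\vee}\cap\mathbb{X}')\oplus\mc{O}^{\times}\rightarrow\mc{M}$ is surjective with the expected relations; this amounts to the statement that, \'etale-locally, any section of $\mc{M}$ differs from a monomial by a unit. This is exactly the content of the normality-plus-completion Lemma proved just above: the normal scheme $P_{\sigma}$ is a toric variety for $T$, its divisors supported on the special fibre are precisely the toric boundary divisors indexed by the rays of $C(\sigma)$, and any function invertible on $T_K$ is, up to a local unit, a monomial determined by its orders of vanishing along these divisors.

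I would therefore argue as follows: given a local section $f$ of $\mc{M}$, it is a non-zero-divisor (being invertible generically) whose divisor is supported on the special fibre $V(\pi)$, so the Lemma immediately preceding Corollary~\ref{cor3.2} (the statement for a noetherian normal domain $B$ with $I=(\pi)$) lets me write $f=g\,u$ \'etale-locally with $u$ a unit and $g$ the equation of an effective toric divisor on the completion. Because the toric boundary divisors of $P_{\sigma}$ are cut out by the monomials $\mathbb{X}^{m}$ with $m$ on the rays of $C(\sigma)^{\vee}$, the function $g$ is a monomial up to a further unit, establishing that $C(\sigma)^{\vee}\cap\mathbb{X}'$ generates $\overline{\mc{M}}$ \'etale-locally and hence is a chart. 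The main obstacle I anticipate is the bookkeeping that matches the principal-divisor output of the Lemma with an actual \emph{monomial} in $R[C(\sigma)^{\vee}\cap\mathbb{X}']$: I must verify that the ideal sheaf $I_D$ produced there is generated by a single character $\mathbb{X}^{m}$ with $m\in C(\sigma)^{\vee}\cap\mathbb{X}'$, which is where the toric structure of $P_{\sigma}$ (and the combinatorial identification of its boundary divisors with the facets of $C(\sigma)^{\vee}$) must be invoked carefully. Once this monomial identification is in hand, compatibility of the charts across the $\sigma$'s, and hence \'etale descent to $P_{\Sigma,n}/Y$, is routine since the gluing in Fact~\ref{fac2.1}(c) is through the evident localisation maps of these toric monoid algebras.
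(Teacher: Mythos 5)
Your proposal is correct and is essentially the paper's own argument: the paper deduces the corollary directly from the preceding proposition $\iota^*M\cong\mc{M}$ (exactly your first reduction), and the \'etale-local identification of $\mc{M}$ with the chart log structure on $P_{\sigma,n}$ --- via the normal-domain Lemma together with the standard toric fact that a $T$-invariant Cartier divisor on $P_{\sigma}$ supported on the special fibre is the divisor of a single character (ultimately because the units of $K[X]$ are $K^{\times}\cdot X$) --- is precisely the content the paper leaves implicit. So the step you flag as delicate does go through by standard toric geometry, and your plan matches the intended proof.
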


\section{Construction of logarithmic abelian varieties}
In this section, we are going to construct the log abelian variety over $S$ extending $A_K$. We follow the paper \cite{k-k-n2}, in particular section 5 there closely.

\subsection{}
In this subsection, we review some constructions from log geometry. Here we only work over $S$, but most constructions work over any fs log schemes. For any two sheaves of abelian groups $F$ and $G$ on $(\mr{fs}/S)$, we denote by $\mc{H}om(F,G)$ the hom-sheaf from $F$ to $G$.

Kato's log multiplicative group $\Gml$ is the sheaf of abelian groups on $(\mr{fs}/S)$ defined by
$$\Gml(U):=M_U^{\mr{gp}}(U)$$
for any $U\in (\mr{fs}/S)$, here $M_U$ denotes the log structure on $U$ and $M_U^{\mr{gp}}$ is the group envelope of $M_U$. It is easy to see that the multiplicative group $\Gm$ embeds into $\Gml$ canonically, so we have a canonical short exact sequence
$$0\rightarrow \Gm\rightarrow \Gml\rightarrow \Gml/\Gm\rightarrow 0.$$ 
We define the log torus $T_{\mr{log}}$ associated to $X$ to be $\mc{H}om(X,\Gml)$. Since
$$\mc{E}xt(X,\Gm)=0,$$
we get a canonical short exact sequence
$$0\rightarrow T\rightarrow T_{\mr{log}}\rightarrow\mc{H}om(X,\Gml/\Gm)\rightarrow 0$$
of sheaves of abelian groups on $(\mr{fs}/S)$.

Recall that we have a bilinear form $<,>:X\times Y\rightarrow K^{\times}$, see (\ref{eq1.1}). This gives rise to a bilinear form 
\begin{equation}\label{eq4.1}
<,>:X\times Y\rightarrow \Gml
\end{equation}
on $(\mr{fs}/S)$, hence a bilinear form 
\begin{equation}\label{eq4.2}
<,>:X\times Y\rightarrow \Gml/\Gm
\end{equation}
on $(\mr{fs}/S)$. Here by abuse of notation, we use $<,>$ to denote all the three parings, this shouldn't lead to any confusion since their meanings could be read out from the context. We define a subgroup sheaf $\mc{H}om(X,\Gml/\Gm)^{(Y)}$ of the sheaf $\mc{H}om(X,\Gml/\Gm)$ by
\[\begin{aligned}
&\mc{H}om(X,\Gml/\Gm)^{(Y)}(U) \\
:=&\left\{\varphi\in\mc{H}om(X,\Gml/\Gm)(U)\; \middle| \; \genfrac{}{}{0pt}{}{\forall u\in U,x\in X_{\bar{u}},\,\exists y,y'\in Y_{\bar{u}},}{\mr{s.t.} <x,y>|\,\varphi_{\bar{u}}(x)\,|<x,y'>} \right\}
\end{aligned}\]
for $U\in (\mr{fs}/S)$, here $\bar{u}$ denotes a geometric point above $u$. The pairing $<,>$ induces a homomorphism $Y\rightarrow \mc{H}om(X,\Gml/\Gm)$, which admits the following factorization 
$$\xymatrix{
Y\ar[rr]\ar[rd] &&\mc{H}om(X,\Gml/\Gm)  \\
&\mc{H}om(X,\Gml/\Gm)^{(Y)}\ar@{^(->}[ru]
}.
$$
Then we define a subgroup sheaf $\Tlog^{(Y)}$ of the sheaf $\Tlog$ as the inverse image of $\mc{H}om(X,\Gml/\Gm)^{(Y)}$ along the homomorphism $\Tlog\rightarrow \mc{H}om(X,\Gml/\Gm)$, whence a commutative diagram
$$\xymatrix{
0\ar[r] &T\ar[r]\ar@{=}[d] &\Tlog^{(Y)}\ar[r]\ar@{^(->}[d] &\mc{H}om(X,\Gml/\Gm)^{(Y)}\ar[r]\ar@{^(->}[d] &0  \\
0\ar[r] &T\ar[r] &\Tlog\ar[r] &\mc{H}om(X,\Gml/\Gm)\ar[r] &0
}$$
with exact rows.

Now for a given admissible polytope decomposition $\Sigma$ of $E$, we define a subsheaf $$\mc{H}om(X,\Gml/\Gm)^{(\Sigma)}$$ of the sheaf $$\mc{H}om(X,\Gml/\Gm)$$ by
\[\begin{aligned}
&\mc{H}om(X,\Gml/\Gm)^{(\Sigma)}(U) \\
:=&\left\{\varphi\in\mc{H}om(X,\Gml/\Gm)(U)\; \middle| \; \genfrac{}{}{0pt}{}{\forall u\in U,\,\exists\sigma\in\Sigma,\;\mr{s.t.}\,\forall (\mu,x)\in C(\sigma)^{\vee}\cap\mathbb{X},}{\mu\cdot\varphi(x)_{\bar{u}}\in (M_U/\mc{O}_U^{\times})_{\bar{u}}}  \right\}.
\end{aligned}\]
Here we stress again that being admissible always means being $Y$-admissible unless otherwise specified. Then we define a subsheaf $\Tlog^{(\Sigma)}$ of the sheaf $\Tlog$ as the inverse image of $$\mc{H}om(X,\Gml/\Gm)^{(\Sigma)}$$ along the homomorphism $$\Tlog\rightarrow \mc{H}om(X,\Gml/\Gm),$$
whence a commutative diagram
$$\xymatrix{
&T\ar[r]\ar@{=}[d] &\Tlog^{(\Sigma)}\ar[r]\ar@{^(->}[d] &\mc{H}om(X,\Gml/\Gm)^{(\Sigma)}\ar@{^(->}[d] \\
0\ar[r] &T\ar[r] &\Tlog\ar[r] &\mc{H}om(X,\Gml/\Gm)\ar[r] &0
}$$
with the second row exact.

\begin{rmk}
\begin{enumerate}
\item[(i)] The canonical inclusion $T\hookrightarrow\Tlog^{(Y)}$ restricts to an isomorphism over $K$, whilst the canonical inclusion $T\hookrightarrow\Tlog$ does not. The subgroup sheaf $\Tlog^{(Y)}$ cuts the very part related to the pairing $<,>$ out of $\Tlog$.
\item[(ii)] The sheaf $\mc{H}om(X,\Gml/\Gm)^{(\Sigma)}$ is actually a subsheaf of the sheaf \\
$\mc{H}om(X,\Gml/\Gm)^{(Y)}$, hence the sheaf $\Tlog^{(\Sigma)}$ is a subsheaf of the sheaf $\Tlog^{(Y)}$. But unlike $\mc{H}om(X,\Gml/\Gm)^{(Y)}$ (resp. $\Tlog^{(Y)}$) being a subgroup sheaf of the sheaf $\mc{H}om(X,\Gml/\Gm)$ (resp. $\Tlog$), the subsheaf
$\mc{H}om(X,\Gml/\Gm)^{(\Sigma)}$ (resp. $\Tlog^{(\Sigma)}$) is in general not a subgroup sheaf.
\end{enumerate}

\end{rmk}

There are many possible admissible polytope decompositions of $E$, whence many subsheaves of $\Tlog^{(Y)}$. We would like to know the relation between $\Tlog^{(\Sigma)}$ and $\Tlog^{(\Sigma')}$ for any two admissible polytope decompositions $\Sigma$ and $\Sigma'$. We would also like to know if it is possible to get $\Tlog^{(Y)}$ from the union of $\Tlog^{(\Sigma)}$ with $\Sigma$ varying in a certain family of admissible polytope decompositions of $E$. The answer is yes, if we also take $mY$-admissible polytope decompositions into account for $m\in\N$. We could even ask the representability of $\Tlog^{(\Sigma)}$ for certain $\Sigma$.  The followings answer these questions.

\begin{prop}\label{prop4.1.1}
For any two admissible polytope decompositions $\Sigma$ and $\Sigma'$ of $E$, we have $$\mc{H}om(X,\Gml/\Gm)^{(\Sigma)}\cap\mc{H}om(X,\Gml/\Gm)^{(\Sigma')}=\mc{H}om(X,\Gml/\Gm)^{(\Sigma\sqcap\Sigma')},$$
whence $\Tlog^{(\Sigma)}\cap \Tlog^{(\Sigma')}=\Tlog^{(\Sigma\sqcap\Sigma')}$.
\end{prop}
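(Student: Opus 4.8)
The plan is to reduce the claimed equality of subsheaves of $\mc{H}om(X,\Gml/\Gm)$ to a pointwise statement at geometric points and then translate it into an elementary statement about rational polyhedral cones and the (saturated) characteristic monoid. Since membership of a section $\varphi\in\mc{H}om(X,\Gml/\Gm)(U)$ in any of the three subsheaves is phrased by a condition of the shape ``for all $u\in U$ there exists a polytope\dots'', writing $P_\sigma(\varphi,\bar u)$ for the defining positivity condition ``$\mu\cdot\varphi(x)_{\bar u}\in(M_U/\mc{O}^\times)_{\bar u}$ for all $(\mu,x)\in C(\sigma)^\vee$'', the equality of sheaves follows once I verify, for each $\varphi$ and each point $u$, the equivalence
\[
\bigl[\exists\,\sigma\in\Sigma:\ P_\sigma(\varphi,\bar u)\bigr]\ \wedge\ \bigl[\exists\,\sigma'\in\Sigma':\ P_{\sigma'}(\varphi,\bar u)\bigr]\ \Longleftrightarrow\ \bigl[\exists\,\tau\in\Sigma\sqcap\Sigma':\ P_\tau(\varphi,\bar u)\bigr].
\]

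The main device is a positivity monoid at $\bar u$. Write $\overline{M}_{\bar u}=(M_U/\mc{O}^\times)_{\bar u}$, an fs (hence saturated) monoid, and let $\tilde\varphi\colon\mathbb{X}'\to\overline{M}^{\mr{gp}}_{\bar u}$ be the group homomorphism that restricts to $\varphi_{\bar u}$ on $X$ and sends $\pi\in\pi^{\Z}$ to the image of the uniformiser, so that $\tilde\varphi(\pi^n x)$ realises $\mu\cdot\varphi(x)_{\bar u}$ additively. Put $\Pi:=\tilde\varphi^{-1}(\overline{M}_{\bar u})$, a submonoid of $\mathbb{X}'$; since $\overline{M}_{\bar u}$ is saturated, so is $\Pi$, and therefore $\Pi=\Pi_\Q\cap\mathbb{X}'$ where $\Pi_\Q\subseteq\Q\oplus X$ is the rational cone it generates. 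Unravelling the definitions, $P_\sigma(\varphi,\bar u)$ says exactly $C(\sigma)^\vee\cap\mathbb{X}'\subseteq\Pi$, which—using saturation of $\Pi$ together with the fact that $C(\sigma)^\vee$ is generated over $\Q_{\geq 0}$ by its lattice points—is equivalent to the inclusion of cones $C(\sigma)^\vee\subseteq\Pi_\Q$. I then record the two cone identities from \cite[Appendix]{oda1}: when $\sigma\cap\sigma'\neq\emptyset$ one has $C(\sigma\cap\sigma')=C(\sigma)\cap C(\sigma')$ and, for polyhedral cones, $(C(\sigma)\cap C(\sigma'))^\vee=C(\sigma)^\vee+C(\sigma')^\vee$; when $\sigma\cap\sigma'=\emptyset$ the two cones meet only at the origin, so their duals already sum to all of $\Q\oplus X$.

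With these the ``$\supseteq$'' inclusion is immediate: given $\tau\in\Sigma\sqcap\Sigma'$ with $P_\tau$, write $\tau=\sigma\cap\sigma'$ with $\sigma\in\Sigma$, $\sigma'\in\Sigma'$; then $\tau\subseteq\sigma$ gives $C(\sigma)^\vee\subseteq C(\tau)^\vee\subseteq\Pi_\Q$, i.e.\ $P_\sigma$, and symmetrically $P_{\sigma'}$. For ``$\subseteq$'' I take $\sigma,\sigma'$ witnessing $P_\sigma,P_{\sigma'}$; then $C(\sigma)^\vee,C(\sigma')^\vee\subseteq\Pi_\Q$ and, $\Pi_\Q$ being a convex cone, $C(\sigma)^\vee+C(\sigma')^\vee\subseteq\Pi_\Q$. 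If $\sigma\cap\sigma'\neq\emptyset$ the left-hand side equals $C(\sigma\cap\sigma')^\vee$, so $P_{\sigma\cap\sigma'}$ holds and $\tau=\sigma\cap\sigma'\in\Sigma\sqcap\Sigma'$ works. Granting the pointwise equivalence, the sheaf identity for $\mc{H}om(X,\Gml/\Gm)^{(-)}$ follows, and the claim for $\Tlog^{(-)}$ is then formal, since all three sheaves are the inverse images of the corresponding $\mc{H}om$-subsheaves along the single map $\Tlog\to\mc{H}om(X,\Gml/\Gm)$ and inverse image preserves intersections.

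The one delicate point, and the step I expect to be the real obstacle, is the degenerate case $\sigma\cap\sigma'=\emptyset$ in ``$\subseteq$'', which escapes the clean duality. There $C(\sigma)^\vee+C(\sigma')^\vee$ fills $\Q\oplus X$, forcing $\Pi_\Q=\Q\oplus X$ and hence $\Pi=\mathbb{X}'$; thus $\tilde\varphi(\mathbb{X}')$ is a subgroup of the sharp monoid $\overline{M}_{\bar u}$ and so is trivial, whence $P_\tau$ holds for every $\tau$ and I may take $\tau$ to be any nonempty member of $\Sigma\sqcap\Sigma'$ (which exists because $\Sigma$ and $\Sigma'$ cover $E$). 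The other point requiring care is the saturation argument underlying the reformulation $P_\sigma\Leftrightarrow C(\sigma)^\vee\subseteq\Pi_\Q$, without which the passage between lattice points and rational cones would break down.
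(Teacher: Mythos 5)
Your proof is correct and takes essentially the same route as the paper: the paper's own argument consists precisely of the dual-cone identity $C(\sigma)^{\vee}+C(\sigma')^{\vee}=C(\sigma\cap\sigma')^{\vee}$ together with the remark that the claim then follows directly from the definitions of $\mc{H}om(X,\Gml/\Gm)^{(\Sigma)}$ and $\Tlog^{(\Sigma)}$. Your write-up simply makes explicit the details the paper leaves implicit --- the pointwise reduction at geometric points, the saturation argument identifying $P_\sigma$ with the cone inclusion $C(\sigma)^{\vee}\subseteq\Pi_\Q$, and the degenerate case $\sigma\cap\sigma'=\emptyset$ --- all of which you handle correctly.
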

\begin{proof}
For $\sigma\in\Sigma$ and $\sigma'\in\Sigma'$, let $C(\sigma)$ and $C(\sigma')$ be as defined in part (b) of Fact \ref{fac2.1}. We have $C(\sigma)^{\vee}+C(\sigma')^{\vee}=C(\sigma\cap\sigma')^{\vee}$ by \cite[App. Thm. A.1 (2)]{oda1}. Then the results follow directly from the definitions of $\mc{H}om(X,\Gml/\Gm)^{(\Sigma)}$ and $\Tlog^{(\Sigma)}$.
\end{proof}

\begin{ex}\label{ex3}
Let the notation be as in Example \ref{ex1} and Example \ref{ex2}. For $m$ a positive integer, let $\Box^d_{m}$ be the $d$-cube with vertices $(a_1,\cdots,a_d)$, $a_i\in\{0,mn_i\}$, let $\Sigma_{\Box^d_{m}}$ be the $mY$-admissible polytope decomposition given by the $mY$-translates of the faces of $\Box^d_m$. Then we have $$\bigcup_{m\geq 1, a\in\frac{1}{2}Y}\Tlog^{(a+\Sigma_{\Box^d_{m}})}=\Tlog^{(Y)}$$ as sheaves, see \cite[3.5.4]{k-k-n1} for the proof. With the help of \cite[3.5.4]{k-k-n1}, one could also construct other examples.
\end{ex}

\begin{prop}\label{prop4.1.2}
Let $\Sigma$ be an $H$-admissible polytope decomposition of $E$ for a cofinite subgroup $H$ of $Y$, then the sheaf $\Tlog^{(\Sigma)}$ is represented by the fs log scheme over $S$ with underlying scheme $P_{\Sigma}$ and log structure coming from the monoids $(C(\sigma)^{\vee}\cap\mathbb{X})_{\sigma\in\Sigma}$.
\end{prop}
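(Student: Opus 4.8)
The plan is to exhibit a natural bijection, functorial in $U \in (\mr{fs}/S)$, between $\Tlog^{(\Sigma)}(U)$ and $P_\Sigma(U)$ (where $P_\Sigma$ is viewed with the log structure specified in the statement), and then check that this bijection is the expected one on the generic fibre and on $T$. First I would reduce to the affine/local situation. Since both sides are sheaves for the étale topology, and since $P_\Sigma$ is covered by the $T$-invariant opens $P_\sigma = \mr{Spec}\, A_\sigma$ with $A_\sigma = R[C(\sigma)^\vee \cap \mathbb{X}']$ (see \ref{fac2.1}(b)), it suffices to identify, for each $\sigma \in \Sigma$, the sub-presheaf of $\Tlog^{(\Sigma)}$ cut out by the condition ``$\sigma$ works'' with the functor of points of the log scheme $P_\sigma$. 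The decomposition being $H$-admissible for a cofinite $H \subseteq Y$ guarantees $\Sigma$ has finitely many orbits, so these local pieces glue along the intersections $P_\sigma \cap P_\tau = P_{\sigma \cap \tau}$ recorded in \ref{fac2.1}(c).

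Next I would unwind the functor of points of $P_\sigma$ as a log scheme. A morphism $U \to P_\sigma$ over $S$ is the data of a monoid homomorphism $C(\sigma)^\vee \cap \mathbb{X}' \to \mc{O}_U(U)$ (equivalently $\Gamma(U, \mc{O}_U)$) sending $\pi$ to its image, and the log structure attached to the chart $C(\sigma)^\vee \cap \mathbb{X}' \to A_\sigma$ pulls back to the log structure on $U$ via the canonical map $C(\sigma)^\vee \cap \mathbb{X}' \to M_U(U)$. On the other side, a section $\varphi \in \Tlog^{(\Sigma)}(U)$ is a homomorphism $X \to \Gml/\Gm$ lying in $\mc{H}om(X,\Gml/\Gm)^{(\Sigma)}$, lifted to $\Tlog = \mc{H}om(X,\Gml)$; the defining condition is precisely that, étale-locally on $U$, there is a single $\sigma$ with $\mu \cdot \varphi(x) \in (M_U/\mc{O}^\times)$ for all $(\mu,x) \in C(\sigma)^\vee$. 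The translation is that each generator $\pi^\mu \ms{X}^x$ of the monoid $C(\sigma)^\vee \cap \mathbb{X}'$ should land in $M_U(U)$, and this is exactly what a section $\varphi$ with the $(\Sigma)$-property provides once lifted to $\Gml = M^{\mr{gp}}$. So the identification is the assignment $\varphi \mapsto$ (the monoid map $(\mu,x) \mapsto \pi^\mu \varphi(x)$), and the reverse map reads off $\varphi(x)$ from the images of the elements $(\mu, x)$.

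The key technical input making this precise is Corollary \ref{cor3.2}, which says the log structure on $P_{\Sigma,n}$ admits étale-local charts $C(\sigma)^\vee \cap \mathbb{X}' \to R[C(\sigma)^\vee \cap \mathbb{X}']/(\pi^{n+1})$; this is what certifies that the log structure on the representing object $P_\Sigma$ is the one coming from the monoids $(C(\sigma)^\vee \cap \mathbb{X}')_\sigma$, matching the structure extracted from $\Gml$-valued points. I would verify that the two maps are mutually inverse by checking on generators: the $T$-torsor structure of $\Tlog^{(\Sigma)}$ over $\mc{H}om(X,\Gml/\Gm)^{(\Sigma)}$ corresponds to the $T$-action on $P_\Sigma$ from \ref{fac2.1}(b), and on the generic fibre $\Gml$ restricts to $\Gm$, recovering $P_{\Sigma,K} = T_K$ from \ref{fac2.1}(a). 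Functoriality in $U$ is then automatic since every construction is natural.

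The main obstacle I expect is the bookkeeping of the ``$\exists \sigma$'' quantifier: the condition defining $\mc{H}om(X,\Gml/\Gm)^{(\Sigma)}(U)$ is local on $U$ (the $\sigma$ may vary over the points of $U$), whereas a morphism $U \to P_\sigma$ to a fixed open chart is a global datum on $U$. Reconciling these requires showing that a section of $\Tlog^{(\Sigma)}$ factors étale-locally through a single chart $P_\sigma$ and that the resulting local morphisms to $P_\Sigma$ glue—i.e., that on overlaps the two chosen $\sigma$'s force a factorization through $P_{\sigma \cap \tau}$, which is where condition \ref{fac2.1}(c) and the cone identity $C(\sigma)^\vee + C(\tau)^\vee = C(\sigma \cap \tau)^\vee$ (used already in \ref{prop4.1.1}) are essential. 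Handling the interaction of this gluing with the $T$-torsor lift from $\mc{H}om(X,\Gml/\Gm)^{(\Sigma)}$ up to $\Tlog^{(\Sigma)}$ is the delicate point, but it is dictated entirely by the torsor structure already established.
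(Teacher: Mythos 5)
Your argument is correct in substance, but you should know that the paper does not actually prove this proposition: its entire proof is the citation ``See \cite[3.5.3, 3.5.4]{k-k-n1}''. What you have written is in effect a self-contained reconstruction of the argument behind that cited result: identify, chart by chart, log morphisms $U\rightarrow P_{\sigma}$ over $S$ with monoid homomorphisms $C(\sigma)^{\vee}\cap\mathbb{X}'\rightarrow M_U(U)$ sending $\pi$ to the pullback of $\pi\in M_S$; match these with sections $\varphi$ of $\Tlog$ satisfying the $(\Sigma)$-condition via the dictionary $\varphi\mapsto\bigl((\mu,x)\mapsto\pi^{\mu}\varphi(x)\bigr)$, using that a section of $M_U^{\mr{gp}}$ lies in $M_U$ exactly when its class lies in $M_U/\mc{O}^{\times}$; and glue the locally chosen charts via $C(\sigma)^{\vee}+C(\tau)^{\vee}=C(\sigma\cap\tau)^{\vee}$ and \ref{fac2.1}(c). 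Your route buys transparency (one sees exactly where the admissibility, the chart description, and the cone identity enter), at the cost of the bookkeeping you flag at the end; the paper buys brevity by deferring all of this to the reference.

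Two corrections to your write-up. First, Corollary \ref{cor3.2} is not an input here: it describes the pullback to $S_n$ of the log structure on the quotient space $A_{\Sigma}$, and in the paper it is used \emph{together with} Proposition \ref{prop4.1.2} to deduce Corollary \ref{cor4.1.2}; in the present proposition the log structure on $P_{\Sigma}$ is the chart log structure by definition, so there is nothing to certify (no circularity arises from your citation, since \ref{cor3.2} is proved earlier, but it does no work). Second, your phrase that the chart log structure ``pulls back to the log structure on $U$'' must not be read as strictness: a $U$-point of $P_{\sigma}$ in $(\mr{fs}/S)$ is an arbitrary log morphism, i.e.\ exactly a monoid homomorphism $C(\sigma)^{\vee}\cap\mathbb{X}'\rightarrow M_U(U)$ compatible with $\pi$, with no requirement that the induced map of log structures be an isomorphism. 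Finally, in the gluing step you should also dispose of the case of two disjoint cells $\sigma\cap\tau=\emptyset$ both satisfying the condition at a geometric point $\bar{u}$: then the condition holds on all of $\mathbb{X}'$, forcing $\pi\in\mc{O}_{\bar{u}}^{\times}$, so $u$ lies on the generic fibre, where all charts collapse to $T_K$ by \ref{fac2.1}(a) and there is nothing to glue.
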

\begin{proof}
See \cite[3.5.3, 3.5.4]{k-k-n1}.
\end{proof}

\begin{cor}\label{cor4.1.2}
Let $H$ be a cofinite subgroup of $Y$, and $\Sigma$ an $H$-admissible polytope decomposition of $E$. We regard $A_{H,\Sigma}$ as a log algebraic space endowed with the log structure (\ref{logstr}).  Then the restriction of $A_{H,\Sigma}$ to $(\mr{fs}/S)'$ coincides with the restriction of $\Tlog^{(\Sigma)}/H$ to $(\mr{fs}/S)'$.
\end{cor}
\begin{proof}
This follows from Proposition \ref{prop4.1.2} and Corollary \ref{cor3.2}.
\end{proof}

\subsection{}
Let $H$ be a cofinite subgroup of $Y$, and $\Sigma$ an $H$-admissible polytope decomposition of $E$. We regard the algebraic space $A_{H,\Sigma}$ as a log algebraic space with respect to the canonical log structure defined in (\ref{logstr}).

Consider the sheaf of sets on $(\mr{fs}/S)$
\begin{equation}\label{deflav}
A=(\coprod_{(H,\Sigma)\in\mr{PolyDecom_Y}} A_{H,\Sigma})/\sim ,
\end{equation}
where $\sim$ is the equivalence relation in the category of sheaves on (fs/$S$) generated by the following equivalences:
\begin{enumerate}
 \item[(a)]  For any map $(H',\Sigma')\rightarrow (H,\Sigma)$ in $\mr{PolDecom_Y}$, i.e. $H'$ is a subgroup of $H$ and $\Sigma'$ is a subdivision of $\Sigma$, we have a canonical morphism
 $$A_{H',\Sigma'}\rightarrow A_{H,\Sigma}$$
 in (fs/$S$). Any element of $A_{H',\Sigma'}(U)$ for $U\in(\mr{fs}/S)$ is equivalent to its image in $A_{H,\Sigma}(U)$.
 \item[(b)] For any $(H,\Sigma)\in\mr{PolDecom_Y}$ and any $a\in Y$, we have a morphism $\mc{A}_{H,\Sigma}\rightarrow\mc{A}_{H,a+\Sigma}$ of formal schemes over $\mc{S}$ induced by the multiplication by the element $<\cdot,a>\in T_{\mr{log}}$, hence a morphism $A_{H,\Sigma}\rightarrow A_{H,a+\Sigma}$ of $S$-spaces. Any element of $A_{H,\Sigma}(U)$ for $U\in(\mr{fs}/S)$ is equivalent to its image in $A_{H,a+\Sigma}(U)$.
\end{enumerate}

The main results of this subsection are the following two theorems, which correspond to \cite[1.7, 4.7]{k-k-n2}.

\begin{thm}\label{thm4.1}
\begin{enumerate}[(a)]
 \item The pullback of $A$ to $(\mr{fs}/K)$ coincides with $A_K$. The restriction of $A$ to $(\mr{fs}/S)'$ coincides with $T_{\mr{log}}^{(Y)}/Y$.
 \item There exists a unique group law on $A$ whose pullback to $(\mr{fs}/K)$ coincides with the group law of $A_K$, and whose restriction to $(\mr{fs}/S)'$ coincides with the group law of $T_{\mr{log}}^{(Y)}/Y$.
 \item The canonical morphism $A_{Y,\Sigma}\rightarrow A$ fits into a Cartesian diagram of the form
$$\xymatrix{
A_{Y,\Sigma}\ar[r]^-{\beta_{Y,\Sigma}}\ar[d] & T_{\mr{log}}^{(\Sigma)}/(Y\cdot T)\ar@{^(->}[d]  \\
A\ar[r]^-{\beta} &T_{\mr{log}}^{(Y)}/(Y\cdot T)
}.
$$ 
In other words, the sheaf $A_{Y,\Sigma}$ coincides with the inverse image of $T_{\mr{log}}^{(\Sigma)}/(Y\cdot T)$ along $\beta$.
 \item With the group law specified in (b), $A$ fits into a short exact sequence $$0\rightarrow G\xrightarrow{\alpha} A\xrightarrow{\beta} T_{\mr{log}}^{(Y)}/(Y\cdot T)\rightarrow 0,$$ where $\alpha$ is the composition $G\hookrightarrow A_Y\hookrightarrow A$ with $A_Y$ as in Example \ref{ex1}, and $\beta$ is as in part (c).
\end{enumerate}
\end{thm}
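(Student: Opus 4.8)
The plan is to prove all four assertions by restricting $A$ to the two ``charts'' $(\mr{fs}/K)$ and $(\mr{fs}/S)'$, on which $A$ will be shown to become the abelian variety $A_K$ and the group sheaf $\Tlog^{(Y)}/Y$ respectively. What makes these two charts sufficient is that each model $A_{H,\Sigma}$ is proper over the complete base $S$, so that by Grothendieck's existence theorem \cite[Chap. 5, Sec. 6]{knu1} (formal GAGA for proper algebraic spaces) a morphism between products of such models is pinned down by its restriction to the generic fibre and to the formal completion. I would record this principle first and then treat the four parts in the order (a), (c), (b), (d), since (b) is logically the hardest and is streamlined once $\beta$ is available.

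For (a), over $(\mr{fs}/K)$ the model $A_{H,\Sigma}$ has generic fibre $T_K/H$; the transition maps in the definition \eqref{deflav} of $A$ are the isogenies $T_K/H'\to T_K/H$ together with the $Y$-translations, and the colimit collapses these to $T_K/Y=A_K$. Over $(\mr{fs}/S)'$, Corollary \ref{cor4.1.2} identifies $A_{H,\Sigma}$ with $\Tlog^{(\Sigma)}/H$, the transition maps becoming the inclusions of Proposition \ref{prop4.1.1} and the $Y$-action; taking the colimit and invoking Example \ref{ex3}, whose union exhausts $\Tlog^{(Y)}$, yields $\Tlog^{(Y)}/Y$. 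For (c), I would define $\beta$ to vanish on $(\mr{fs}/K)$---legitimate because the target $\Tlog^{(Y)}/(Y\cdot T)$ is zero there, both $\Tlog^{(Y)}$ and $Y\cdot T$ restricting to $T_K$ over $K$---and to be the canonical projection $\Tlog^{(Y)}/Y\to\Tlog^{(Y)}/(Y\cdot T)$ on $(\mr{fs}/S)'$. The Cartesian assertion is then checked chart by chart: it is trivial over $K$, and over $(\mr{fs}/S)'$ it reduces to the identity $T\cdot\Tlog^{(\Sigma)}=\Tlog^{(\Sigma)}$, which holds since $\Tlog^{(\Sigma)}$ is a $T$-torsor over $\mc{H}om(X,\Gml/\Gm)^{(\Sigma)}$.

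The heart of the argument is (b). Uniqueness is immediate from the set-up: a group law is a morphism $A\times A\to A$, and by the formal-GAGA principle its restriction to each product of proper models, which locally factors through a proper model, is determined by the two prescribed chart-restrictions. For existence I would assemble the addition out of the partial-addition morphisms $m_{+}$ on the models $(A_Y\times A_Y)_{\boxbackslash}$ of Example \ref{ex1}, together with their evident analogues for arbitrary pairs $(H,\Sigma)$: given local sections $a,b$ of $A$, realise them in a common refinement, add through the $\boxbackslash$-model, and observe that under Corollary \ref{cor4.1.2} this recovers the group operation of $\Tlog^{(Y)}/Y$ over $(\mr{fs}/S)'$ and the group law of $A_K$ over $(\mr{fs}/K)$. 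Independence of the chosen refinement and representatives, and associativity, reduce---once more by formal GAGA---to the combinatorial compatibilities of the polytope decompositions encoded in the commutative diagram \eqref{diag9} of Example \ref{ex2}; the identity section is $\alpha\circ e_G$ and inverses come from $y\mapsto -y$ on $E$. I expect matching up the threefold subdivisions so that $m_{+,12}$ and $m_{+,23}$ agree to be the main obstacle, as this is precisely the hard combinatorics flagged in the introduction.

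Finally, (d) follows by assembling (a)--(c). The morphism $\beta$ is surjective, and its kernel, computed on charts, is the full subsheaf $A_K$ over $(\mr{fs}/K)$ and $(Y\cdot T)/Y\cong T$ over $(\mr{fs}/S)'$, the latter isomorphism because $Y\cap T=0$ in $\Tlog^{(Y)}$. Since the semiabelian scheme $G$ satisfies $G_K=A_K$ and restricts to $T$ on $(\mr{fs}/S)'$, this kernel is exactly $G$, with $\alpha$ the stated composition $G\hookrightarrow A_Y\hookrightarrow A$; checking exactness on each chart then gives the short exact sequence $0\to G\xrightarrow{\alpha}A\xrightarrow{\beta}\Tlog^{(Y)}/(Y\cdot T)\to 0$.
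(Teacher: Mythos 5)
Your master principle---that morphisms, subsheaves and group laws on $(\mr{fs}/S)$ can be defined and verified by restricting to the two charts $(\mr{fs}/K)$ and $(\mr{fs}/S)'$---is false for this site, and this gap runs through your proofs of (b), (c) and (d). The two subcategories do not cover $(\mr{fs}/S)$: an object such as $S$ itself, or any noetherian $\mr{Spec}B$ hitting both the closed and the generic point, admits no \'etale cover by objects lying in $(\mr{fs}/K)\cup(\mr{fs}/S)'$. Consequently one cannot \emph{define} $\beta$ by prescribing it on the two charts (the paper defines it globally as the composite $A\twoheadrightarrow i_*i^*A=i_*i^*(\Tlog^{(Y)}/Y)\twoheadrightarrow i_*i^*(\Tlog^{(Y)}/(Y\cdot T))=\Tlog^{(Y)}/(Y\cdot T)$, using that the target equals its own $i_*i^*$), nor can Cartesianness in (c), uniqueness in (b), or exactness in (d) be \emph{checked} chart by chart. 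Formal GAGA does not repair this: it makes completion fully faithful on \emph{proper algebraic spaces} over $S$, but $A$ is not representable, and a section of $A$ over a general $U$ is not a morphism of proper models. What would legitimize chart-by-chart reasoning is precisely the injectivity of
\[
A(B)\longrightarrow A(B\otimes_RK)\times\varprojlim_n A(B/\pi^{n+1}B)
\]
for noetherian $B$, and in the paper this is a theorem (Lemma \ref{lem4.2.3}) with a genuinely non-formal proof, not a consequence of properness: two sections of $A$ agreeing on the charts lift to sections of a single model $A_{mY,\Sigma_{\Box^d_m}}$ that agree only up to the $Y/mY$-translation ambiguity, and killing that ambiguity requires applying the difference map $m_-$, landing in the quasi-finite \'etale cover $G'\to G$, and using finiteness of $F=\mr{Ker}(G'\to G)$.

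The paper's actual route is: Lemma \ref{lem4.2.1} (injectivity of each $A_\Sigma\to A$, itself needing Bhatt's algebraization theorem), Lemma \ref{lem4.2.2} (the cube-translation argument you partially anticipate, placing any pair of local sections into one $\boxslash$-model), Lemma \ref{lem4.2.3} (the displayed injectivity), and Lemma \ref{lem4.2.4} ($A(B)$ is closed under the group operation of the right-hand group); existence \emph{and} uniqueness of the group law in (b) then both follow by transport of structure inside an honest group, so the well-definedness and associativity issues you flag as ``the main obstacle'' never arise as such. Part (c) is likewise not a chart computation: given $f\in A(U)$ with $\beta(f)\in(\Tlog^{(\Sigma)}/(Y\cdot T))(U)$, the paper \'etale-locally lifts $f$ through the modification $A_{H,\Gamma\sqcap\Sigma}\to A_{H,\Gamma}$ by a ring-theoretic argument with $\pi$-adic completions of affine \'etale charts, exploiting that the modification is an isomorphism on generic fibres. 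And (d) is deduced from (c) together with the fact that $A_Y\setminus G$ is supported on the special fibre, not by matching kernels on charts (two subsheaves of $A$ agreeing on both charts need not coincide). Your part (a) is correct and agrees with the paper; parts (b), (c), (d) need to be re-founded on Lemmas \ref{lem4.2.1}--\ref{lem4.2.4} or arguments of equal strength.
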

\begin{proof}
The pullback of $A_{H,\Sigma}$ to $(\mr{fs}/K)$ is a Galois \'etale cover of $A_K$ with Galois group $Y/H$, hence the pullback of $A$ to $(\mr{fs}/K)$ is just $(A_{H,\Sigma})_K/(Y/H)=A_K$. By Example \ref{ex3} and Corollary \ref{cor4.1.2}, it is easy to see that the restriction of $A$ to $(\mr{fs}/S)'$ is just $\Tlog^{(Y)}$. This proves part (a).

The composition of the canonical morphisms
$$A\twoheadrightarrow i_*i^*A=i_*i^*(\Tlog^{(Y)}/Y)\twoheadrightarrow i_*i^*(\Tlog^{(Y)}/(Y\cdot T))=\Tlog^{(Y)}/(Y\cdot T)$$
gives a surjective homomorphism $\beta:A\twoheadrightarrow \Tlog^{(Y)}/(Y\cdot T)$. The latter equality can be seen from the expression $\Tlog^{(Y)}/(Y\cdot T)=\mc{H}om(X,\Gml/\Gm)^{(Y)}/\bar{Y}$, where $\bar{Y}$ denotes the image of the homomorphism $Y\rightarrow\mc{H}om(X,\Gml/\Gm)^{(Y)}$ associated to the paring $<,>$. Similarly we have a surjective morphism 
$$\beta_{H,\Gamma}:A_{H,\Gamma}\twoheadrightarrow \Tlog^{(\Gamma)}/(H\cdot T)$$
for any $H$-admissible polytope decomposition $\Gamma$ with $H$ a cofinite subgroup of $Y$. It is easy to see that $\beta$ and $\beta_{Y,\Sigma}$ fit into a canonical commutative diagram
\begin{equation*}
\xymatrix{A_{Y,\Sigma}\ar@{^(->}[d]\ar[r]\ar@{->>}[r] & \Tlog^{(\Sigma)}/Y\cdot T\ar@{^(->}[d]   \\
A\ar@{->>}[r] &\Tlog^{(Y)}/Y\cdot T 
}.
\end{equation*}
In order to finish the proof of (c), we need to show that: for any $U\in (\mr{fs}/S)$, and any $f\in A(U)$ such that $\beta(f)\in(\Tlog^{(\Sigma)}/(Y\cdot T))(U)$, \'etale locally on $U$ we can lift $f$ to a section of $A_{Y,\Sigma}$. Without loss of generality, we may assume that the underlying scheme of $U$ is $\mr{Spec}B$ for a noetherian ring $B$.

Suppose that $f$ is represented by a section in $A_{H,\Gamma}(B):=A_{H,\Gamma}(U)$ for some $(H,\Gamma)\in\mr{PolDecom}_Y$, and we still call it $f$. By using the diagram
\begin{equation*}
\xymatrix{&A_{H,\Gamma\sqcap\Sigma}\ar[ld]\ar[d]^{\text{\'etale quotient}}   \\
A_{H,\Gamma}   &A_{Y,\Gamma\sqcap\Sigma}\ar[d]  \\
&A_{Y,\Sigma}
},
\end{equation*}
it is enough to lift $f$ to $A_{H,\Gamma\sqcap\Sigma}$. Let $\hat{B}$ (resp. $B_K$) be $\varprojlim_{n\in\N} B/\pi^{n+1}B$ (resp. $B\otimes_R K$), then $f$ induces $\hat{f}=(f_n)_{n\in\N}\in A_{H,\Gamma}(\hat{B})=\varprojlim_n A_{H,\Gamma}(B/\pi^{n+1}B)=\varprojlim_n T^{(\Gamma)}_{\mr{log}}/H(B/\pi^{n+1}B)$ (resp. $f_K\in A_{H,\Gamma}(B_K)=A_K(B_K)$). Here the identification $A_{H,\Gamma}(\hat{B})=\varprojlim_n A_{H,\Gamma}(B/\pi^{n+1}B)$ is guaranteed by \cite[Thm. 1.1]{bha1}. We have $\Tlog^{(\Gamma)}\cap\Tlog^{(\Sigma)}=\Tlog^{(\Gamma\sqcap\Sigma)}$ by Proposition \ref{prop4.1.1} and $\beta(f)\in (\Tlog^{(\Sigma)}/(Y\cdot T))(B)$, hence $\beta(f)=\beta_{H,\Gamma}(f)\in (\Tlog^{(\Gamma\sqcap\Sigma)}/(H\cdot T))(B)$ and $\beta(f_n)\in (\Tlog^{(\Gamma\sqcap\Sigma)}/(H\cdot T))(B/\pi^{n+1}B)$. The diagram
$$\xymatrix{
T^{(\Gamma\sqcap\Sigma)}_{\mr{log}}/H\ar[r]\ar@{^(->}[d]  &T^{(\Gamma\sqcap\Sigma)}_{\mr{log}}/(H\cdot T)\ar@{^(->}[d] \\
T^{(\Gamma)}_{\mr{log}}/H\ar[r]  &T^{(\Gamma)}_{\mr{log}}/(H\cdot T)
}$$
is Cartesian, hence $f_n\in T^{(\Gamma\sqcap\Sigma)}_{\mr{log}}/H(B/\pi^{n+1}B)$. Note that $(f_n)_{n\in\N}$ is actually a compatible system, hence we get an element $\hat{g}=(f_n)_{n\in\N}\in \varprojlim_n T^{(\Gamma\sqcap\Sigma)}_{\mr{log}}/H(B/\pi^{n+1}B)=\varprojlim_n A_{H,\Gamma\sqcap\Sigma}(B/\pi^{n+1}B)=A_{H,\Gamma\sqcap\Sigma}(\hat{B})$. The classical fpqc covering $\hat{B}\times B_K$ of $B$ gives rise to the following commutative diagram
$$\xymatrix{
A_{H,\Gamma\sqcap\Sigma}(B)\ar[d]\ar@{^(->}[r] &A_{H,\Gamma\sqcap\Sigma}(\hat{B})\times A_K(B_K)\ar@<-.5ex>[r] \ar@<.5ex>[r]\ar[d] &A_K(\hat{B}\otimes_RK)\ar@{=}[d]\\
A_{H,\Gamma}(B)\ar@{^(->}[r] &A_{H,\Gamma}(\hat{B})\times A_K(B_K)\ar@<-.5ex>[r] \ar@<.5ex>[r] &A_K(\hat{B}\otimes_RK)
}$$
with exact rows. By diagram chasing, it is easy to see that $(\hat{g},f_K)$ descends to an element $g\in A_{H,\Gamma\sqcap\Sigma}(B)$ which  lifts $f$. This finishes the proof of part (c).

We postpone the proof of (b) and (d) to the end of this subsection.
\end{proof} 

\begin{thm}\label{thm4.2}
The sheaf $A$ with the group law specified in Theorem \ref{thm4.1} is a log abelian variety over $S$ extending the abelian variety $A_K$ over $K$.
\end{thm}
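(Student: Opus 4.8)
The plan is to verify, clause by clause, that the sheaf $A$ meets the definition of a log abelian variety in \cite{k-k-n2}, with Theorem \ref{thm4.1} supplying essentially all of the structural input. In our split totally degenerate situation the Raynaud extension degenerates to the torus $T$, so the relevant ``$G_{\log}$'' of \cite{k-k-n2} is exactly $\Tlog$, and the defining data reduce to the character lattice $X$, the period lattice $Y$, and the monodromy pairing $<,>\colon X\times Y\to K^{\times}$ of \eqref{eq1.1}.

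First I would collect what is already proved. By Theorem \ref{thm4.1}(b), $A$ carries a unique group law, so it is a sheaf of abelian groups on $(\mr{fs}/S)$. By Theorem \ref{thm4.1}(a), its pullback to $(\mr{fs}/S)'$ is $\Tlog^{(Y)}/Y$ and its pullback to $(\mr{fs}/K)$ is $A_K$; thus on the nilpotent part of $S$ the sheaf $A$ is literally the model $\Tlog^{(Y)}/Y$ of \cite{k-k-n2}, glued to the abelian variety $A_K$ over the generic point, and the two group laws agree there. By Theorem \ref{thm4.1}(d), $A$ sits in the short exact sequence
\[
0\to G\xrightarrow{\alpha} A\xrightarrow{\beta}\Tlog^{(Y)}/(Y\cdot T)\to 0
\]
with $G$ the semiabelian scheme of Theorem \ref{mumthm}; this is precisely the defining exact sequence \cite[4.1.2]{k-k-n2}, presenting $A$ as an extension of its ``log part'' by a semiabelian scheme.

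The one substantive hypothesis remaining is the non-degeneracy (properness) condition on the pairing. Here I would argue that $<,>$, being the monodromy pairing of the totally degenerate abelian variety $A_K$, composes with the valuation $\mr{ord}\colon K^{\times}\to\Z$ to give a non-degenerate pairing $X\times Y\to\Z$, which moreover becomes symmetric and positive-definite after identifying $X$ with $Y$ through a polarization of $A_K$. This positivity is exactly the pointwise non-degeneracy condition of \cite{k-k-n2}; it is also what makes the fans we used $Y$-admissible and what forces the algebraic models $A_{\Gamma}$ to be proper over $S$ (Theorem \ref{mainthm}). Since we have used the \emph{full} lattice $Y$ --- the superscript $(Y)$, not a partial $(\Sigma)$ --- together with this non-degeneracy, the quotient $\Tlog^{(Y)}/Y$ is proper, so $A$ is a genuine log abelian variety rather than merely a log semiabelian one. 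Log smoothness and properness over $S$ are witnessed by the models $A_{\Gamma}$, which are proper (Theorem \ref{mainthm}) and, after refining $\Gamma$ to a regular decomposition, regular with the canonical log structure computed in Corollary \ref{cor3.2}.

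I expect the main obstacle to be organisational rather than conceptual: one must match our concretely built $A$ --- a colimit of quotients of the proper algebraic-space models $A_{H,\Sigma}$ --- with the abstract sheaf $\Tlog^{(Y)}/Y$ of \cite{k-k-n2} and check that the identification is compatible with the group law on both the generic and the nilpotent parts. Since Theorem \ref{thm4.1} already performs this identification over $(\mr{fs}/K)$ and $(\mr{fs}/S)'$ and provides the exact sequence above, the residual work is to cite the correct clause of the definition in \cite{k-k-n2} and to record that its non-degeneracy hypothesis holds automatically because $<,>$ is the period pairing of an honest abelian variety.
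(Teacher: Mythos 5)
There is a genuine gap: you never verify the separateness condition \cite[4.1.3]{k-k-n2}, which is exactly the part of the definition that Theorem \ref{thm4.1} does \emph{not} already hand you, and it is where the paper's proof does all of its real work. Your proposal correctly observes that \ref{thm4.1}(a) gives condition \cite[4.1.1]{k-k-n2} and that the exact sequence of \ref{thm4.1}(d) gives condition \cite[4.1.2]{k-k-n2}, but then substitutes, as ``the one substantive hypothesis remaining,'' the non-degeneracy of the monodromy pairing. That hypothesis is essentially automatic (it is built into $A_K$ being an abelian variety and is already implicit in the identification of the pullback of $A$ to $(\mr{fs}/S)'$ with $\Tlog^{(Y)}/Y$, i.e.\ in condition 4.1.1); it is \emph{not} the missing clause. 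The missing clause is a finiteness statement about the sheaf $A$ over the whole base: for any $U\in(\mr{fs}/S)$ and any two morphisms $f,g:U\rightarrow A$, the equalizer $E(f,g)$ must be representable by an algebraic space finite over $U$. This concerns arbitrary fs log schemes $U$ over $S$, whose sections of $A$ mix generic-fibre and nilpotent-fibre behaviour, so it does not follow from properness statements over $(\mr{fs}/K)$ and $(\mr{fs}/S)'$ separately, nor from the pointwise positivity of the pairing.

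The paper closes this gap by a concrete argument you would still need to supply: introduce the log abelian variety $A'$ attached to the sublattice $mY$, use the construction (\ref{deflav}) to get the exact sequence $0\rightarrow Y/mY\rightarrow A'\rightarrow A\rightarrow 0$, lift $f,g$ \'etale locally (via Lemma \ref{lem4.2.2}) to sections $f',g'$ of the proper, hence separated, algebraic space $A_{mY,\Sigma_{\Box_m^d}}$, and then decompose $E(f,g)$ locally as the disjoint union of the equalizers $E(f',a\cdot g')$ with $a\in Y/mY$; each of these is finite over $U$ because $A_{mY,\Sigma_{\Box_m^d}}$ is separated over $S$, so $E(f,g)$ is finite over $U$. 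Your closing remark that properness of the models $A_\Gamma$ ``witnesses'' properness of $A$ gestures at the right ingredient (separatedness of the models is indeed what is used), but without the lifting step and the $Y/mY$-decomposition it does not yield the required finiteness of equalizers for the sheaf $A$ itself.
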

We also postpone the proof of Theorem \ref{thm4.2} to the end of this subsection.

\begin{defn}
In view of Theorem \ref{thm4.2}, we call the sheaf of abelian groups $A$ the \textbf{log abelian variety associated to the period lattice $Y$ in $T_K$}.
\end{defn}

\begin{rmk}
Any cofinite subgroup $H$ of $Y$ can be regarded as a period lattice in $T_K$ canonically, hence we can define the log abelian variety for $H$ in the same way as for $Y$ in (\ref{deflav}).
\end{rmk}

Before going to the rest parts of the proof of Theorem \ref{thm4.1} and the proof of Theorem \ref{thm4.2}, let us first give some lemmas needed for the proofs.

\begin{lem}\label{lem4.2.1}
For any $Y$-admissible polytope decomposition $\Sigma$ of $E$, the canonical map $A_{\Sigma}\rightarrow A$ is injective.
\end{lem}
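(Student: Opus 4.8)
The plan is to prove that $A_\Sigma \to A$ is a monomorphism of sheaves on $(\mr{fs}/S)$ by checking it objectwise: a morphism of sheaves is a monomorphism if and only if $A_\Sigma(U)\to A(U)$ is injective for every $U$. So I fix $U\in(\mr{fs}/S)$ and two sections $f,g\in A_\Sigma(U)$ with the same image in $A(U)$, and the goal is to deduce $f=g$. Here I write $A_\Sigma$ for $A_{Y,\Sigma}$. Since $A_\Sigma$ is proper over $S$ by Theorem \ref{mainthm}, it is in particular separated, so the equaliser of the two morphisms $f,g\colon U\to A_\Sigma$ is a \emph{closed} subspace $Z\hookrightarrow U$, and the whole problem reduces to showing $Z=U$.

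First I record that the two restrictions of the morphism $A_\Sigma\to A$ are injective, which forces $U_K=U\times_S\mr{Spec}K$ and each $U_n=U\times_S S_n$ to factor through $Z$. Over $(\mr{fs}/K)$, Theorem \ref{thm4.1}(a) identifies the pullback of $A$ with $A_K$, while the pullback of $A_\Sigma$ is its generic fibre $(A_\Sigma)_K=A_K$, and the induced map is the identity, hence injective; therefore $f|_{U_K}=g|_{U_K}$. Over $(\mr{fs}/S)'$, Corollary \ref{cor4.1.2} (with $H=Y$) identifies the pullback of $A_\Sigma$ with $\Tlog^{(\Sigma)}/Y$ and Theorem \ref{thm4.1}(a) identifies the pullback of $A$ with $\Tlog^{(Y)}/Y$, the induced map being the one coming from the inclusion of subsheaves $\Tlog^{(\Sigma)}\hookrightarrow\Tlog^{(Y)}$. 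This map is injective on the quotients: since $\Sigma$ is $Y$-admissible we have $y+\Sigma=\Sigma$, hence $y\cdot\Tlog^{(\Sigma)}=\Tlog^{(\Sigma)}$ for all $y\in Y$, so two local sections of $\Tlog^{(\Sigma)}$ that differ by an element of $Y$ already agree in $\Tlog^{(\Sigma)}/Y$. Thus $f|_{U_n}=g|_{U_n}$ for all $n$, and consequently the open immersion $U_K\to U$ and every closed immersion $U_n\to U$ factor through $Z$.

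It remains to prove $Z=U$, which I expect to be the main obstacle, as it is the point where the $\pi$-adic structure really enters. Topologically it is clear, since $|U|=|U_K|\sqcup|U_0|$ and $Z$ contains both strata. For the scheme-theoretic statement I would work étale-locally, and reduce to the case that $U$ is noetherian by a standard limit argument: $A_\Sigma$ is of finite presentation over the noetherian base $S$, so $f$ and $g$ descend to a noetherian object of $(\mr{fs}/S)$ together with the equalities over the generic fibre and over the $U_n$, and it suffices to treat that case. Writing $U=\mr{Spec}B$ with $B$ noetherian and letting $\mc{I}\subseteq B$ be the ideal of $Z$, factoring through $U_K$ gives $\mc{I}[\pi^{-1}]=0$, i.e. every element of $\mc{I}$ is killed by a power of $\pi$, while factoring through all the $U_n$ gives $\mc{I}\subseteq\bigcap_n\pi^{n+1}B$. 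By Krull's intersection theorem, any $x\in\bigcap_n\pi^{n+1}B$ satisfies $(1-\pi b)x=0$ for some $b\in B$, whence $x=\pi^N b^N x$ for every $N$; choosing $N$ with $\pi^N x=0$ yields $x=b^N\pi^N x=0$. Hence $\mc{I}=0$, so $Z=U$ and $f=g$. The delicate interplay between $\pi$-power torsion and $\pi$-adic separatedness in this last step is exactly what makes properness of $A_\Sigma$ and the noetherian reduction indispensable.
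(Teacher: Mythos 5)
Your overall route is genuinely different from the paper's, and the skeleton is the same in both: reduce to the generic fibre and to all the thickenings $U_n$, then use a $\pi$-adic argument over a noetherian model. The paper finishes by faithfully flat descent along $B\rightarrow(B\otimes_RK)\times\varprojlim_n B/\pi^{n+1}B$ combined with Bhatt's algebraization theorem $A_{\Sigma}(\varprojlim_nB/\pi^{n+1}B)=\varprojlim_nA_{\Sigma}(B/\pi^{n+1}B)$ for the proper space $A_{\Sigma}$, whereas you finish with ``closed equalizer plus Krull intersection'', avoiding algebraization altogether; your verification that $\Tlog^{(\Sigma)}/Y\rightarrow\Tlog^{(Y)}/Y$ is injective is also a detail the paper dismisses as clear. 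However, there is a genuine gap at the equalizer step. A section of $A_{\Sigma}$ over $U\in(\mr{fs}/S)$ is a morphism of \emph{fs log} algebraic spaces: an underlying morphism together with a morphism of log structures $f^{-1}M_{A_{\Sigma}}\rightarrow M_U$, and it is \emph{not} determined by its underlying morphism, because $\alpha_U\colon M_U\rightarrow\mc{O}_U$ is in general not injective, so the required lift of $f^{-1}M_{A_{\Sigma}}\rightarrow\mc{O}_U$ along $\alpha_U$ is not unique (already a standard log point mapping to a point of the closed fibre where $\overline{M}_{A_{\Sigma}}$ has rank $\geq 2$ admits several log morphisms with the same underlying map). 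Consequently, if your $Z$ is the equalizer of the \emph{underlying} morphisms, separatedness does make it closed and Krull does give $Z=U$, but this only proves $\underline{f}=\underline{g}$, not $f=g$; if instead $Z$ is meant as the equalizer of $f,g$ as sections of the sheaf $A_{\Sigma}$ on $(\mr{fs}/S)$, then it is the fs fibre product of $U$ along the log diagonal of $A_{\Sigma}$, which is not a strict morphism, and its being a closed subspace of $U$ does not follow merely from separatedness of the underlying algebraic space.

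The gap is a missing step rather than a failed approach, and it can be closed with exactly the tool you already use. After your argument gives $\underline{f}=\underline{g}$, compare the log parts stalkwise: at geometric points over the generic fibre, the log structure of $A_{\Sigma}$ is trivial (it consists of units), so the log parts are forced to agree; at a geometric point $\bar{u}$ of the special fibre, the hypothesis $f|_{U_n}=g|_{U_n}$ (an equality of \emph{log} morphisms, which your first step does provide) says that for every local section $m$ of $f^{-1}M_{A_{\Sigma}}$ one has $g^{\flat}(m)=f^{\flat}(m)\cdot v$ with $v\in\mc{O}_{U,\bar{u}}^{\times}$ and $v\equiv 1\bmod\pi^{n+1}$; cancellativity of $M_{U,\bar{u}}$ shows $v$ is independent of $n$, and Krull in the noetherian local ring $\mc{O}_{U,\bar{u}}$, where $\pi$ lies in the maximal ideal, gives $\bigcap_n\pi^{n+1}\mc{O}_{U,\bar{u}}=0$, hence $v=1$ and $f^{\flat}=g^{\flat}$. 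One further small caution: in your limit reduction, the infinitely many equalities over the $U_n$ need not descend to a single noetherian stage (the stage could depend on $n$); the clean fix is to descend the single hypothesis ``equal images in $A(U)$'' — using that $A$, a quotient of a disjoint union of finitely presented spaces, is limit-preserving — and then re-run your first step over the noetherian model.
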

\begin{proof}
This follows from part (c) of Theorem \ref{thm4.1}.
\end{proof}

\begin{cor}\label{cor4.2.1}
Let $H$ be a cofinite subgroup of $Y$, hence $H$ is also a period lattice in $T_K$. Let $A'$ be the log abelian variety associated to $H$. Then we have:
\begin{enumerate}
\item[(a)] The sheaf $A$ is an \'etale quotient of $A'$ under the canonical group action by $Y/H$.
\item[(b)] For any two $H$-admissible polytope decompositions $\Sigma$ and $\Gamma$, $A_{H,\Sigma}$ and $A_{H,\Gamma}$ are two subsheaves of $A'$ with $A_{H,\Sigma}\cap A_{H,\Gamma}=A_{H,\Sigma\sqcap\Gamma}$. In particular, for $H=Y$ we have $A_{\Sigma}\cap A_{\Gamma}=A_{\Sigma\sqcap\Gamma}$ inside $A$.
\end{enumerate}
\end{cor}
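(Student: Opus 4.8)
The plan is to establish the two parts by separate arguments: part (a) from the colimit presentation \eqref{deflav} together with a cofinality reindexing, and part (b) from the Cartesian square of \ref{thm4.1}(c) combined with \ref{prop4.1.1}. Throughout I shall use the analogue of \ref{thm4.1} for $A'$, where the period lattice $Y$ is replaced by $H$; this is legitimate by the remark following \eqref{deflav}.

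For (a), I would first record the identity $\Tlog^{(Y)}=\Tlog^{(H)}$, valid because $H$ has finite index in $Y$: the ``sandwiching'' condition defining $\mc{H}om(X,\Gml/\Gm)^{(Y)}$ only requires $\varphi(x)_{\bar u}$ to be bounded above and below by values $<x,y>_{\bar u}$ with $y\in Y$, and since $H$ is cofinite its monodromy values $<x,h>_{\bar u}$ $(h\in H)$ are cofinal in both directions among those of $Y$, so the conditions for $Y$ and for $H$ coincide. Next I would reindex the quotient \eqref{deflav}: as every cofinite $H'\subseteq Y$ has $H'\cap H$ cofinite in $H$, the pairs $(H'',\Sigma)$ with $H''\subseteq H$ are cofinal, so both $A$ and $A'$ are quotients of the single coproduct $\coprod_{H''\subseteq H,\,\Sigma}A_{H'',\Sigma}$. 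The relation defining $A'$ imposes the refinement identifications and the $H/H''$-translations, whereas the one defining $A$ imposes in addition the $Y/H''$-translations; since $(Y/H'')/(H/H'')=Y/H$, the extra identifications are precisely those generated by a residual $Y/H$-action on $A'$. Hence $Y$ acts on $A'$ through $Y/H$ and $A=A'/(Y/H)$. To see that this quotient is \'etale I would check freeness after restriction: on $(\mr{fs}/K)$ the action of $Y/H$ on $A'|_K=T_K/H$ is the free action by deck transformations of the isogeny $T_K/H\to T_K/Y=A_K$, and on $(\mr{fs}/S)'$ it is the free action on $\Tlog^{(H)}/H=\Tlog^{(Y)}/H$ with quotient $\Tlog^{(Y)}/Y$; these match the restrictions of $A$ predicted by \ref{thm4.1}(a).

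For (b), the injectivity of $A_{H,\Sigma}\hookrightarrow A'$ and $A_{H,\Gamma}\hookrightarrow A'$ is the analogue of \ref{lem4.2.1} for the period lattice $H$, so both are subsheaves of $A'$. Writing $\beta:A'\to\Tlog^{(H)}/(H\cdot T)$ for the homomorphism of \ref{thm4.1}(d), the Cartesian square of \ref{thm4.1}(c) identifies $A_{H,\Sigma}$ (resp. $A_{H,\Gamma}$) with $\beta^{-1}$ of the image of $\Tlog^{(\Sigma)}$ (resp. $\Tlog^{(\Gamma)}$) in $\Tlog^{(H)}/(H\cdot T)$. Since forming inverse images commutes with intersection, it suffices to prove that these two images meet in the image of $\Tlog^{(\Sigma\sqcap\Gamma)}$. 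For this I would combine $\Tlog^{(\Sigma)}\cap\Tlog^{(\Gamma)}=\Tlog^{(\Sigma\sqcap\Gamma)}$ from \ref{prop4.1.1} with the fact that $\Tlog^{(\Sigma)}$ and $\Tlog^{(\Gamma)}$ are stable under $T$ (being $T$-torsors over $\mc{H}om(X,\Gml/\Gm)^{(\Sigma)}$, resp. $\mc{H}om(X,\Gml/\Gm)^{(\Gamma)}$) and under $H$ (because $\Sigma$ and $\Gamma$ are $H$-admissible, hence $H$-stable): if $t\in\Tlog^{(\Sigma)}$ and $t'\in\Tlog^{(\Gamma)}$ have equal image modulo $H\cdot T$, then $t'=t\cdot h\cdot\tau$ with $h\in H$ and $\tau\in T$, and the $H\cdot T$-stability of $\Tlog^{(\Sigma)}$ forces $t'\in\Tlog^{(\Sigma)}\cap\Tlog^{(\Gamma)}=\Tlog^{(\Sigma\sqcap\Gamma)}$. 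This gives $A_{H,\Sigma}\cap A_{H,\Gamma}=A_{H,\Sigma\sqcap\Gamma}$, the case $H=Y$ being the final assertion.

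The step I expect to be the main obstacle is upgrading the colimit-level identification $A=A'/(Y/H)$ to the statement that it is an \emph{\'etale} quotient: I must handle the sheafification implicit in \eqref{deflav} and verify that the $Y/H$-action is free as an action of sheaves, for which the reductions to $(\mr{fs}/K)$ and $(\mr{fs}/S)'$ above are the essential input. A secondary delicate point, in (b), is justifying that the intersection of the two images in $\Tlog^{(H)}/(H\cdot T)$ is genuinely the image of the intersection; this is exactly where $H$-admissibility of $\Sigma$ and $\Gamma$ is indispensable, since without $H$-stability the translate $t\cdot h$ above need not lie in $\Tlog^{(\Sigma)}$.
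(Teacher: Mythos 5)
Your proposal is correct, but it takes a genuinely different route from the paper in part (b), so let me compare. For part (a) the paper's entire proof is the sentence ``follows from the definitions of $A$ and $A'$, see (\ref{deflav})''; your cofinality reindexing of the coproduct, the identification of the extra identifications with a residual $Y/H$-action, and the freeness check after restriction to $(\mr{fs}/K)$ and $(\mr{fs}/S)'$ are precisely the content that sentence suppresses, so there you are expanding the paper's argument rather than replacing it. For part (b) the paper stays entirely inside the colimit: it reduces to the case $H=Y$, quotes Lemma \ref{lem4.2.1} for injectivity, and then, for a section $f\in A_{\Sigma}\cap A_{\Gamma}$, asserts that the equivalence relations in (\ref{deflav}) yield a single $Y$-admissible decomposition $\Delta$ refining both $\Sigma$ and $\Gamma$ with $f\in A_{\Delta}$; since any such $\Delta$ automatically refines $\Sigma\sqcap\Gamma$, the section lies in $A_{\Sigma\sqcap\Gamma}$. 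You instead push the computation down to $\Tlog^{(H)}/(H\cdot T)$ via the Cartesian square of Theorem \ref{thm4.1}(c) (legitimately available at this point, as are Lemma \ref{lem4.2.1} and its analogue for $H$), and compute the intersection of the two images there using Proposition \ref{prop4.1.1} together with the $H\cdot T$-stability of $\Tlog^{(\Sigma)}$ and $\Tlog^{(\Gamma)}$ --- and your observation that this stability is exactly where $H$-admissibility of the decompositions enters is correct. What each approach buys: the paper's argument is shorter, but its key step (tracing a chain of generated identifications back to one common refinement $\Delta$ containing the given section) is asserted without justification; your argument replaces that combinatorial tracing by an explicit manipulation on the base of the Cartesian square, where ``equal modulo $H\cdot T$'' can be unwound \'etale-locally as $t'=t\cdot\langle\cdot,h\rangle\cdot\tau$, and it also treats general $H$ uniformly instead of reducing to $H=Y$. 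Both proofs are sound; yours is longer but each step is directly checkable against results already established in the paper.
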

\begin{proof}
Part (a) follows from the definitions of $A$ and $A'$, see (\ref{deflav}). For (b), it is enough to consider the case $H=Y$. By Proposition \ref{prop4.1.1}, we have
$$\Tlog^{(\Sigma)}/(Y\cdot T)\cap \Tlog^{(\Gamma)}/(Y\cdot T)=\Tlog^{(\Sigma\sqcap\Gamma)}/(Y\cdot T)$$
inside $\Tlog^{(Y)}/(Y\cdot T)$. Then the result follows from the Cartesian diagram of part (c) of Theorem \ref{thm4.1}.
\end{proof}

\begin{lem}\label{lem4.2.2}
Let $U$ be an fs log scheme over $S$, and $f,g\in A(U)$. Let $\Box^d_{m}$ and $\Sigma_{\Box^d_{m}}$ be as in Example \ref{eq4.1}. Then \'etale locally on $U$, there exists an integer $m\geq 1$ and sections $\tilde{f},\tilde{g}$ of $A_{mY,\Sigma_{\Box^d_{m}}}$ such that $f$ (resp. $g$) comes from $\tilde{f}$ (resp. $\tilde{g}$), and $(\tilde{f},\tilde{g})$ belongs to $(A_{mY,\Sigma_{\Box^d_{m}}}\times_S A_{mY,\Sigma_{\Box^d_{m}}})_{\boxslash^{2d}}$. Here $(A_{mY,\Sigma_{\Box^d_{m}}}\times_S A_{mY,\Sigma_{\Box^d_{m}}})_{\boxslash^{2d}}$ is the analogue of $(A_Y\times_S A_Y)_{\boxslash^{2d}}=(A_{Y,\Sigma_{\Box^d}}\times_S A_{Y,\Sigma_{\Box^d}})_{\boxslash^{2d}}$ (see Example \ref{ex1}) for the period lattice $mY$.
\end{lem}
\begin{proof}
Suppose that $f$ (resp. $g$) comes from $A_{H,\Sigma}$ (resp. $A_{H',\Sigma'}$). For $\bar{\sigma}\in\Sigma/H$, we have an open algebraic subspace $A_{H,\Sigma,\bar{\sigma}}$ of $A_{H,\Sigma}$, see Definition \ref{defn2.3}. By Remark \ref{rmk2.1}, $(A_{H,\Sigma,\bar{\sigma}})_{\bar{\sigma}\in\Sigma/H}$ (resp. $(A_{H',\Sigma',\bar{\sigma}'})_{\bar{\sigma}'\in\Sigma'/H'}$) is an open covering of $A_{H,\Sigma}$ (resp. $A_{H',\Sigma'}$). We may assume that $f$ (resp. $g$) comes from $A_{H,\Sigma,\bar{\sigma}}(U)$ (resp. $A_{H',\Sigma',\bar{\sigma}'}(U)$) for some $\sigma\in\Sigma$ (resp. $\sigma'\in\Sigma'$). 

We can make $\sigma,\sigma'\subset-\underline{x}+\Box^d_{2m'}$ with $\underline{x}=(m'n_1,\cdots,m'n_d)$, and $m'Y\subset H\cap H'$, for some positive integer $m'$ big enough. According to the equivalences in (b) of Definition (\ref{deflav}), we can replace $\Sigma$ and $\Sigma'$ by their translates under $\underline{x}$, so that $\sigma,\sigma'\subset\Box^d_{2m'}$. Now let $m=4m'$, and we further replace $\Sigma'$ by its translate under $2\underline{x}=(2m'n_1,\cdots,2m'n_d)$, then we have $\sigma,\sigma'\subset\Box^d_m$, and $\sigma\times\sigma'$ goes into $\Box^d_m$ under the map $E\times E\rightarrow E,(a,b)\mapsto -a+b$. See the picture below.

\tikzstyle{helplines}=[gray,thick,dashed]
\begin{tikzpicture}[domain=0:2]
\draw[thick,color=gray,step=1.5cm] (0,0) grid (4,4);
\draw [thick] (0,0) node[below left]{0} (3,0) node[below right]{m};
\draw [thick] (1.5,-.1) node[below right]{2m'} -- (1.5,.1);
\draw[->] (0,0) -- (4,0) node[below right]{E};
\draw[->] (0,0) -- (0,4) node[left] {E};
\draw (0,0) -- (4,4) node[above left]{$a=b$}; 
\draw (0.5,0)  -- (1,0) node[below left]{$\sigma$};
\draw (0,.4)  -- (0,1) node[below left]{$\sigma'$};
\draw (0.5,.4) rectangle (1,1) node[below right]{$\sigma\times\sigma'$};
\draw[style=helplines] (0.5,0) -- (0.5,.4);
\draw[style=helplines] (1,0) -- (1,.4);
\draw[style=helplines] (0,.4) -- (0.5,.4);
\draw[style=helplines] (0,1) -- (0.5,1);
\end{tikzpicture}
\begin{tikzpicture}
\draw (0,0)node{};
\draw[>=latex,->] (0,2) -- (1,2);
\end{tikzpicture}
\tikzstyle{helplines}=[gray,thick,dashed]
\begin{tikzpicture}[domain=0:2]
\draw[thick,color=gray,step=3cm] (0,0) grid (4,4);
\draw [thick] (0,0) node[below left]{0} (3,0) node[below right]{m};
\draw [thick] (1.5,-.1) node[below right]{2m'} -- (1.5,.1);
\draw[->] (0,0) -- (4,0) node[below right] {E};
\draw[->] (0,0) -- (0,4) node[left] {E};
\draw (0,0) -- (4,4) node[above left]{$a=b$}; 
\draw (0.5,0)  -- (1,0) node[below left]{$\sigma$};
\draw (0,1.9)  -- (0,2.5) node[below left]{$2\underline{x}+\sigma'$};
\draw (0.5,1.9) rectangle (1,2.5);
\draw (0.5,2.5) node[above right]{$\sigma\times(2\underline{x}+\sigma')$};
\draw[style=helplines] (0.5,0) -- (0.5,1.9);
\draw[style=helplines] (1,0) -- (1,1.9);
\draw[style=helplines] (0,1.9) -- (0.5,1.9);
\draw[style=helplines] (0,2.5) -- (0.5,2.5);
\end{tikzpicture}   \\
So we get the following diagram
\begin{equation*}
\xymatrix{
A_{mY,\Sigma,\bar{\sigma}}\ar[r]^-{\alpha}\ar[d]_{\text{\'etale}}&A_{mY,\Sigma_{\Box^d_{m}},\bar{\Box}^d_m} &A_{mY,\Sigma',\bar{\sigma}'}\ar[l]_-{\alpha'}\ar[d]^{\text{\'etale}} \\
A_{H,\Sigma,\bar{\sigma}} &&A_{H',\Sigma',\bar{\sigma}'}
}
\end{equation*}
with the following canonical factorization
\begin{equation*}
\xymatrix{
A_{mY,\Sigma,\bar{\sigma}}\times_S A_{mY,\Sigma',\bar{\sigma}'}\ar[d]^-{\alpha\times\alpha'}\ar[rd] \\
A_{mY,\Sigma_{\Box^d_{m}},\bar{\Box}^d_m}\times_S A_{mY,\Sigma_{\Box^d_{m}},\bar{\Box}^d_m}\ar[d] &(A_{mY,\Sigma_{\Box^d_{m}}}\times_S A_{mY,\Sigma_{\Box^d_{m}}})_{\boxslash^{2d}}\ar[ld]  \\
A_{mY,\Sigma_{\Box^d_{m}}}\times_S A_{mY,\Sigma_{\Box^d_{m}}}  \\
}.
\end{equation*}
It follows that, \'etale locally, we can make $f$ and $g$ come from $\tilde{f}$ and $\tilde{g}$ in $A_{mY,\Sigma_{\Box^d_{m}}}$ respectively such that $(\tilde{f},\tilde{g})$ belongs to $(A_{mY,\Sigma_{\Box^d_{m}}}\times_S A_{mY,\Sigma_{\Box^d_{m}}})_{\boxslash^{2d}}$.
\end{proof}

\begin{lem}\label{lem4.2.3}
Let $\mr{Spec} B$ be an fs log scheme over $S$ with $B$ a noetherian ring. Then the canonical map $A(B)\rightarrow A(B\otimes_R K)\times\varprojlim_n A(B/\pi^{n+1}B)$ is injective.
\end{lem}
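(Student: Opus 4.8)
The statement to prove is that for $\mr{Spec} B$ an fs log scheme over $S$ with $B$ noetherian, the canonical map
\[
A(B)\longrightarrow A(B\otimes_R K)\times\varprojlim_n A(B/\pi^{n+1}B)
\]
is injective. The plan is to reduce the injectivity for the non-representable sheaf $A$ to the injectivity statement for the representable pieces $A_{\Sigma}$, which was already established (in essentially this form) in Lemma \ref{lem4.2.1} via the faithful flatness of $B\to (B\otimes_R K)\times\varprojlim_n(B/\pi^{n+1}B)$. So I would begin by taking two sections $f,g\in A(B)$ that map to the same element on the right-hand side, and show $f=g$.

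First I would use Lemma \ref{lem4.2.2} to descend both sections, \'etale locally on $\mr{Spec} B$, to a common representable model: there exist $m\geq 1$ and sections $\tilde f,\tilde g\in A_{mY,\Sigma_{\Box^d_m}}$ lifting $f,g$, with $(\tilde f,\tilde g)\in(A_{mY,\Sigma_{\Box^d_m}}\times A_{mY,\Sigma_{\Box^d_m}})_{\boxslash}$. Since injectivity can be checked \'etale locally (the target is a product of sheaves and \'etale descent is exact), this is harmless. The point of landing inside $A_{mY,\Sigma_{\Box^d_m}}$ is that this is an honest algebraic space, so Lemma \ref{lem4.2.1} applies to it verbatim. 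Concretely, $\tilde f$ and $\tilde g$ are two sections of the \emph{same} representable sheaf $A_{mY,\Sigma_{\Box^d_m}}$, and they agree after restriction to $B\otimes_R K$ and to each $B/\pi^{n+1}B$, because $f$ and $g$ do and the maps $A_{mY,\Sigma_{\Box^d_m}}\to A$ commute with these base changes.

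By Lemma \ref{lem4.2.1} (with $A_\Sigma$ taken to be $A_{mY,\Sigma_{\Box^d_m}}$), the map $A_{mY,\Sigma_{\Box^d_m}}(B)\to A_{mY,\Sigma_{\Box^d_m}}(B\otimes_R K)\times\varprojlim_n A_{mY,\Sigma_{\Box^d_m}}(B/\pi^{n+1}B)$ is injective, where I use $A_{mY,\Sigma_{\Box^d_m}}(B\otimes_R K)=A_K(B\otimes_R K)$ on the generic fibre and the commutation of $A_{mY,\Sigma_{\Box^d_m}}$ with the inverse limit over the $B/\pi^{n+1}B$ coming from \cite{bha1}. Hence $\tilde f=\tilde g$, and pushing forward along $A_{mY,\Sigma_{\Box^d_m}}\to A$ gives $f=g$ \'etale locally, thus $f=g$ globally by the separatedness of the sheaf $A$.

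The main obstacle I anticipate is purely bookkeeping at the reduction step: one must be careful that the integer $m$ and the polytope decomposition $\Sigma_{\Box^d_m}$ produced by Lemma \ref{lem4.2.2} can be chosen uniformly for the pair $(f,g)$ on a single \'etale neighbourhood, so that $\tilde f$ and $\tilde g$ genuinely live in the \emph{same} representable model and the comparison with their generic and formal restrictions is literal rather than up to the equivalence relation $\sim$. Once both sections are trapped in one $A_{mY,\Sigma_{\Box^d_m}}$, the argument is a direct transcription of Lemma \ref{lem4.2.1} and no further flatness input is required beyond what is already in that proof.
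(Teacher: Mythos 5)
There is a genuine gap at the central step of your argument. After invoking Lemma \ref{lem4.2.2} to lift $f,g$ to sections $\tilde f,\tilde g$ of $A_{mY,\Sigma_{\Box^d_m}}$, you assert that $\tilde f$ and $\tilde g$ ``agree after restriction to $B\otimes_R K$ and to each $B/\pi^{n+1}B$, because $f$ and $g$ do and the maps $A_{mY,\Sigma_{\Box^d_m}}\to A$ commute with these base changes.'' Commutation with base change only tells you that the \emph{images} of $\tilde f$ and $\tilde g$ in $A(B\otimes_R K)$ and $A(B/\pi^{n+1}B)$ coincide; it does not make the lifts themselves coincide, because the map $A_{mY,\Sigma_{\Box^d_m}}\to A$ is not injective. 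On the generic fibre, $(A_{mY,\Sigma_{\Box^d_m}})_K$ is the abelian variety with period lattice $mY$, which is an \'etale cover of $A_K$ with Galois group $Y/mY$; formally, sections are identified in $A$ up to the equivalences of type (b) in (\ref{deflav}), i.e.\ up to multiplication by $\langle\cdot,a\rangle$ for $a\in Y/mY$. So the most you can conclude is that $\tilde f$ and $\tilde g$ agree \emph{up to this deck-transformation ambiguity}, and Lemma \ref{lem4.2.1} (which, note, gives injectivity of $A_{mY,\Sigma_{\Box^d_m}}$ into the log abelian variety $A'$ associated to the lattice $mY$, not into $A$) cannot be applied ``verbatim'' to force $\tilde f=\tilde g$.

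Resolving exactly this ambiguity is the actual content of the paper's proof, which is why it is substantially longer than your sketch. The paper first uses the equality of the images of $f,g$ in $T_{\mr{log}}^{(Y)}/(T\cdot Y)$ to find $y\in Y$ such that $\langle\cdot,y\rangle\cdot\tilde f$ and $\tilde g$ have the same image in $T_{\mr{log}}^{(Y)}/(T\cdot(mY))$; it then forms the ``difference'' section $a:=m_{-}(\langle\cdot,y\rangle\tilde f,\tilde g)$ via the partial-subtraction morphism of Example \ref{ex1}, shows $a$ lands in the semiabelian open subspace $G'$ of $A_{mY,\Sigma_{\Box^d_m}}$, and only \emph{there} applies the faithful-flatness injectivity argument (the one underlying Lemma \ref{lem4.2.1}) to conclude that $a$ lies in the finite kernel $F=\mr{Ker}(G'\rightarrow G)$. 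Since $F$ acts by deck transformations, $\langle\cdot,y\rangle\tilde f$ and $\tilde g$ are identified in $A$, whence $f=g$. Your plan is missing this entire mechanism: without the translation by $\langle\cdot,y\rangle$, the subtraction map $m_{-}$, and the passage through $G'\rightarrow G$, the reduction to the representable case does not go through.
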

\begin{proof}
If $B$ is a $K$-algebra, there is nothing needed to prove. We assume that $\pi$ is not invertible in $B$. We may further assume that $\mr{Spec}B$ is connected. Let $f,g\in A(B)$, we assume that the image of $f$ and $g$ in $A(B\otimes_R K)\times\varprojlim_n A(B/\pi^{n+1}B)$ coincide. We want to prove $f=g$.

By Lemma \ref{lem4.2.2}, \'etale locally on $\mr{Spec} B$, there exists $m\geq 1$ such that $f,g$ come from $\tilde{f},\tilde{g}\in A_{mY,\Sigma_{\Box^d_{m}}}$ and such that $(\tilde{f},\tilde{g})\in (A_{mY,\Sigma_{\Box^d_{m}}}\times_S A_{mY,\Sigma_{\Box^d_{m}}})_{\boxslash^{2d}}$. Since the images of $f,g$ in $T_{\mr{log}}^{(Y)}/(T\cdot Y)$ coincide, there exists $y\in Y$ such that the images of $\tilde{h}:=<\cdot,y>\tilde{f}$ and $\tilde{g}$ in $T_{\mr{log}}^{(Y)}/(T \cdot(mY))$ coincide. Hence the image of $\tilde{h}$ in $T_{\mr{log}}^{(Y)}/(T \cdot(mY))$ also lies in $T_{\mr{log}}^{(\Sigma_{\Box^d_{m}})}/(T \cdot(mY))$. It follows that $\tilde{h}\in A_{mY,\Sigma_{\Box^d_{m}}}$ by part (c) of Theorem \ref{thm4.1} applied to the abelian variety associated to the period lattice $mY$ in $T_K$. We also have that the image of $(\tilde{h},\tilde{g})$ in
$$(T\times_ST)_{\mr{log}}^{(mY\times mY)}/(T\cdot (mY\times mY))$$
actually lies in 
$$(T\times_ST)_{\mr{log}}^{(\Sigma_{\boxslash^{2d},mY})}/(T\cdot (mY\times mY)).$$
Here $\Sigma_{\boxslash^{2d},mY}$ is an $mY\times mY$-admissible polytope decomposition of $E\times E$ which is defined to be the analogue of $\Sigma_{\boxslash^{2d}}$ (see Example \ref{ex1}) for the period lattice $mY\times mY$ in $(T\times_S T)_K$. By part (c) of Theorem \ref{thm4.1} applied to the abelian variety associated to the period lattice $mY\times mY$ in $(T\times_S T)_K$, we have that
$$(\tilde{h},\tilde{g})\in (A_{mY,\Sigma_{\Box^d_{m}}}\times_S A_{mY,\Sigma_{\Box^d_{m}}})_{\boxslash^{2d}}.$$
Let $G'$ be the semi-abelian scheme over $S$ corresponding to the period lattice $mY$ in $T$. Since the image of $a:=m_{-}(\tilde{h},\tilde{g})$ in $T_{\mr{log}}^{(\Sigma_{\Box^d_{m}})}/(T \cdot(mY))$ is the identity of $T_{\mr{log}}^{(Y)}/(T \cdot(mY))$, the section $a$ of $A_{mY,\Sigma_{\Box^d_{m}}}$ actually lies in the open subspace $G'$ of $A_{mY,\Sigma_{\Box^d_{m}}}$, i.e. $a\in G'(B)$. Let $G$ be the semi-abelian scheme over $S$ associated to $A_K$. The image of $a$ in $G(B\otimes_RK)$ and the image of $a$ in $\varprojlim_nG(B/\pi^{n+1}B)=G(\varprojlim_nB/\pi^{n+1}B)$ vanish. Since $B\rightarrow B\otimes_RK\times \varprojlim_nB/\pi^{n+1}B$ is faithfully flat, the canonical map $G(B)\rightarrow G(B\otimes_RK)\times \varprojlim_nG(B/\pi^{n+1}B)$ is injective. Hence the image of $a$ in $G$ is the unit element, therefore $a$ lies in the quasi-finite separated group scheme $F:=\mr{Ker}(G'\rightarrow G)$. Since the special fibre of $F$ is trivial, the underlying scheme of $F$ is the disjoint union of the unit section and another open subscheme defined over $K$. Since $\pi$ is not invertible in $B$ and $\mr{Spec}B$ is connected, $a$ is actually the identity element of $F(B)$. 

Now we consider the following commutative diagram
\begin{equation*}
\xymatrix{A_{mY,\Sigma_{\Box^d_{m}}}(B)\times A_{mY,\Sigma_{\Box^d_{m}}}(B)\ar@{^(->}[r]^-{\alpha_2\times \alpha_2} &Q_{mY,\Sigma_{\Box^d_{m}}}\times Q_{mY,\Sigma_{\Box^d_{m}}}   \\
(A_{mY,\Sigma_{\Box^d_{m}}}\times_S A_{mY,\Sigma_{\Box^d_{m}}})_{\boxslash^{2d}}(B)\ar@{^(->}[r]^-{\alpha_1}\ar[d]^{m_-}\ar[u] &(Q_{mY,\Sigma_{\Box^d_{m}}}\times Q_{mY,\Sigma_{\Box^d_{m}}})_{\boxslash^{2d}}
\ar@{^(->}[u]\ar[d]^{\tilde{m}_-} \\
A_{mY,\Sigma_{\Box^d_{m}}}(B)\ar@{^(->}[r]^-{\alpha_2} &Q_{mY,\Sigma_{\Box^d_{m}}}
},
\end{equation*}
where $Q_{mY,\Sigma_{\Box^d_{m}}}$ denotes the set 
$$A_{mY,\Sigma_{\Box^d_{m}}}(B\otimes_R K)\times\varprojlim_n A_{mY,\Sigma_{\Box^d_{m}}}(B/\pi^{n+1}B),$$ 
and $(Q_{mY,\Sigma_{\Box^d_{m}}}\times Q_{mY,\Sigma_{\Box^d_{m}}})_{\boxslash^{2d}}$ denotes the set 
$$(A_{mY,\Sigma_{\Box^d_{m}}}\times_S A_{mY,\Sigma_{\Box^d_{m}}})_{\boxslash^{2d}}(B\otimes_R K)\times \varprojlim_n (A_{mY,\Sigma_{\Box^d_{m}}}\times_S A_{mY,\Sigma_{\Box^d_{m}}})_{\boxslash^{2d}}(B/\pi^{n+1}B).$$
The map $\tilde{m}_-$ is the restriction of the map $\mathscr{G}\times\mathscr{G}\rightarrow\mathscr{G},(x,y)\mapsto x^{-1}y$, where $\mathscr{G}$ denotes the group $$A_{mY,\Sigma_{\Box^d_{m}}}(B\otimes_R K)\times\varprojlim_n T^{(mY)}_{\mr{log}}/mY(B/\pi^{n+1}B).$$
Then $a=m_-(\tilde{h},\tilde{g})$ being the identity of $G'(B)$ implies that
$$\tilde{m}_-(\alpha_1(\tilde{h},\tilde{g}))= \alpha_2(\tilde{h})^{-1}\alpha_2(\tilde{g})=1$$
in the group $\mathscr{G}$. Hence we get $<\cdot,y>\tilde{f}=\tilde{h}=\tilde{g}$ from the injectivity of $\alpha_2$. It follows then $f=g$.
\end{proof}

\begin{lem}\label{lem4.2.4}
In the situation of Lemma \ref{lem4.2.3}, $A(B)$ is a subgroup of the abelian group $A(B\otimes_R K)\times\varprojlim_n A(B/\pi^{n+1}B)$. 
\end{lem}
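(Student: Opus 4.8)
The plan is to read the statement as asserting that the injective restriction map $\theta\colon A(B)\to A(B\otimes_R K)\times\varprojlim_n A(B/\pi^{n+1}B)$ of \ref{lem4.2.3} identifies $A(B)$ with a subgroup of its target; this is exactly the input needed to transport the group laws of the two fibres onto $A(B)$. The target carries a natural group structure: by \ref{thm4.1}(a) the first factor is $A_K(B\otimes_R K)$, a group via the group law of $A_K$, and since each $\mr{Spec}(B/\pi^{n+1}B)$ lies in $(\mr{fs}/S)'$, where $A$ agrees with $\Tlog^{(Y)}/Y$, the second factor is an inverse limit of the groups $(\Tlog^{(Y)}/Y)(B/\pi^{n+1}B)$ and hence a group. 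The identity section of $A$ maps to the identity of this product, so by the subgroup criterion it suffices to prove that the image of $\theta$ is closed under $(x,y)\mapsto x^{-1}y$.

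First I would fix $f,g\in A(B)$ and invoke \ref{lem4.2.2}: étale locally on $\mr{Spec}B$, over a cover $\{U_i\to\mr{Spec}B\}$, there are integers $m_i\geq 1$ and sections $\tilde f_i,\tilde g_i$ of $A_{m_iY,\Sigma_{\Box^d_{m_i}}}$ inducing $f|_{U_i}$ and $g|_{U_i}$, with $(\tilde f_i,\tilde g_i)$ lying in $(A_{m_iY,\Sigma_{\Box^d_{m_i}}}\times A_{m_iY,\Sigma_{\Box^d_{m_i}}})_{\boxslash}$. Applying the morphism $m_-$ of \ref{ex1} (with $m_iY$ as period lattice, as in the proof of \ref{lem4.2.3}) then produces $h_i:=m_-(\tilde f_i,\tilde g_i)\in A_{m_iY,\Sigma_{\Box^d_{m_i}}}(U_i)$, whose canonical image is a section of $A(U_i)$, since $m_iY$ is cofinite in $Y$.

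The central step is to identify the image of $h_i$ in the product group. Because $m_-$ is induced by the linear map $(a,b)\mapsto -a+b$ on $E\times E\to E$, its restriction to the generic fibre is the difference morphism $A_K\times A_K\to A_K$, while by \ref{cor4.1.2} its restriction to $(\mr{fs}/S)'$ is the difference morphism of $\Tlog^{(Y)}/Y$ (both arising from $(\varphi,\psi)\mapsto -\varphi+\psi$ on $\Tlog=\mc{H}om(X,\Gml)$). Hence the image of $h_i$ in the product group is precisely the restriction to $U_i$ of the globally defined section $\theta(f)^{-1}\theta(g)$.

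Finally I would globalise. Since the local images of the $h_i$ all coincide with the restriction of a single global section of the target sheaf, the injectivity of \ref{lem4.2.3}, applied over the $U_i$ and their fibre products, forces the $h_i$ to agree on overlaps; as $A$ is a sheaf they glue to a section $h\in A(B)$ with $\theta(h)=\theta(f)^{-1}\theta(g)$. This shows the image of $\theta$ contains the identity and is closed under $(x,y)\mapsto x^{-1}y$, hence is a subgroup. I expect the main obstacle to lie in the central step: checking carefully that $m_-$ simultaneously realises the group-theoretic difference on $A_K$ over the generic fibre and on $\Tlog^{(Y)}/Y$ over $(\mr{fs}/S)'$, so that the étale-local sections $h_i$ really do represent $\theta(f)^{-1}\theta(g)$ and therefore descend.
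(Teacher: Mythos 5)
Your proposal is correct and follows essentially the same route as the paper: lift $f,g$ via Lemma \ref{lem4.2.2} to the $\boxslash$-model, apply $m_-$, and identify the image in $W=A(B\otimes_R K)\times\varprojlim_n A(B/\pi^{n+1}B)$ with $\theta(f)^{-1}\theta(g)$, so that $A(B)$ is closed under $(x,y)\mapsto x^{-1}y$. The only difference is one of care, not of method: you spell out the étale-local lifting, the agreement of the $h_i$ on overlaps via the injectivity of Lemma \ref{lem4.2.3}, and the gluing by the sheaf property of $A$, all of which the paper compresses into the phrase ``we can assume that $f,g$ come from $\tilde f,\tilde g$''.
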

\begin{proof}
Let $f,g\in A(B)$. By Lemma \ref{lem4.2.2}, we can assume that $f,g$ come from $\tilde{f},\tilde{g}\in A_{mY,\Sigma_{\Box^d_{m}}}$ for some $m\geq 1$, such that $(\tilde{f},\tilde{g})$ lies in $(A_{mY,\Sigma_{\Box^d_{m}}}\times_S A_{mY,\Sigma_{\Box^d_{m}}})_{\boxslash^{2d}}$. Write the group $A(B\otimes_R K)\times\varprojlim_n A(B/\pi^{n+1}B)$ as $W$, we have the following commutative diagram
\begin{equation*}
\xymatrix{
(\tilde{f},\tilde{g})\in\quad (A_{mY,\Sigma_{\Box^d_{m}}}\times_S A_{mY,\Sigma_{\Box^d_{m}}})_{\boxslash^{2d}}(B)\ar[d]\ar[rd]^{m_{-}}  \\
(\tilde{f},\tilde{g})\in\quad A_{mY,\Sigma_{\Box^d_{m}}}(B)\times A_{mY,\Sigma_{\Box^d_{m}}}(B)\ar[d]  &A_{mY,\Sigma_{\Box^d_{m}}}(B)\ar[d] \quad \ni m_{-}(\tilde{f},\tilde{g})\\
(f,g)\in\quad A(B)\times A(B)\ar@{^(->}[d]  &  A(B)\ar@{^(->}[d] \\
W\times W\ar[r]_{(a,b)\mapsto a^{-1}\cdot b}  &W
}
\end{equation*}
Hence as a subset of $W$, $A(B)$ is closed under the group operation of $W$, hence a subgroup of $W$.
\end{proof}

\begin{proof}[Proof of Theorem \ref{thm4.1}, continued:]
Now we come to the proof of part (b). \\
Combining Lemma \ref{lem4.2.3} and Lemma \ref{lem4.2.4}, by a limit argument which reduces the problem to noetherian rings, we get a unique group structure on $A$ with required properties.

For part (d), the injectivity of $\alpha$ and the surjectivity of $\beta$ are clear, so we are left to show the exactness in the middle. Further investigation of the definitions of $\alpha$ and $\beta$, tells us $\beta\circ\alpha=0$. Now let $f\in A(\mr{Spec}B)$ with $\beta(f)=0$, where $\mr{Spec}B$ is a noetherian fs log scheme over $S$ . By (c), we know $f\in A_Y(\mr{Spec}B)$ with $f_0=f\times_S S_0=0$, hence the image of $f$ lies in the open subspace $G$ of $A_Y$. Here $A_Y$ is as defined in Example \ref{ex1}. This finishes the proof of part (d).
\end{proof}

\begin{proof}[Proof of Theorem \ref{thm4.2}:]
The condition \cite[4.1.1]{k-k-n2} is satisfied by part (a) of Theorem \ref{thm4.1}. By part (d) of Theorem \ref{thm4.1}, the condition \cite[4.1.2]{k-k-n2} is also satisfied. We are left to show the separateness condition \cite[4.1.3]{k-k-n2}. For $U\in (\mathrm{fs}/S)$ and two morphisms $f,g:U\rightarrow A$, we need to show that the equalizer $E(f,g)$ is finite over $U$. Let $h:=f\cdot g^{-1}$, then we have $E(f,g)=E(h,1)$. Suppose that $h$ comes from a section $h'$ of $A_{mY,\Sigma_{\Box_m^d}}$ \'etale locally for some $m$. Let $A_K'$ be the abelian variety with period lattice $mY$ for a positive integer $m$, let $A'$ be the log abelian variety corresponding to $A_K'$. By construction (\ref{deflav}), we have a short exact sequence $$0\rightarrow Y/mY\rightarrow A'\rightarrow A\rightarrow 0$$ of abelian sheaves over $(\mathrm{fs}/S)$. By the above short exact sequence, the equalizer $E(h,1)$ is locally the disjoint union of $E(h',a\cdot 1)$ with $a$ varying in $Y/mY$. Note that $a\cdot 1$ is a section of $A_{mY,\Sigma_{\Box_m^d}}$ for $a\in Y/mY$. The algebraic space $A_{mY,\Sigma_{\Box_m^d}}$ is separated over $S$, hence $E(h',a\cdot 1)$ is finite over $U$, so is $E(h,1)$. This finishes the proof.
\end{proof}

\subsection{}
In last subsection, we have associated a canonical log abelian variety $A$ over $S$ to any split totally degenerate abelian variety $A_K$ over $K$. Let $\mr{STDAV_K}$ (resp. $\mr{LAV}_S$) denote the category of split totally degenerate abelian varieties over $K$ (resp. log abelian varieties over $S$). We regard this association as a map 
$$\mr{Deg}:\mr{STDAV}_K\rightarrow \mr{LAV}_S.$$ 
In this subsection, we show that $\mr{Deg}$ is actually a functor. In particular, we will associate to any homomorphism $f:A_K\rightarrow A_K'$ in $\mr{STDAV}_K$ a homomorphism from $A:=\mr{Deg}(A_K)$ to $A':=\mr{Deg}(A_K')$.

Let $T$ (resp. $T'$) and $Y$ (resp. $Y'$) be the Raynaud extension and the period lattice of $A_K$ (resp. $A_K'$) respectively, and let $Y\xrightarrow{u_K} T_K$ (resp. $Y'\xrightarrow{u'_K} T_K'$) be the rigid analytic uniformisation of $A_K$ (resp. $A_K'$). Then we get a homomorphism 
$$\begin{CD}
Y @>f_{-1}>> Y'\\
@VVu_KV @VVu_K'V\\
T_K @>f_0>> T_K'
\end{CD} $$
of 1-motives over $K$ in a functorial way. This further gives the following commutative diagram
$$\begin{CD}
Y @>f_{-1}>> Y'\\
@VVu_KV @VVu_K'V\\
E=\mr{Hom}(X,\Q) @>f_a>> E'=\mr{Hom}(X',\Q)
\end{CD} $$
after taking valuation maps and tensoring with $\Q$. 

Now in order to have a map from $A$ to $A'$, we need to find $A_{H,\Sigma}$ somewhere to go for each pair $(H,\Sigma)\in\mr{PolDecom}_Y$. This comes down to find a pair $(H',\Sigma')\in\mr{PolDecom}_{Y'}$ such that $f_{-1}(H)\subset H'$ and for any $\sigma\in\Sigma$ there exists $\sigma'\in\Sigma'$ with $f_a(\sigma)\subset\sigma'$. We are going to construct such a pair explicitly.

Let $\overline{f_a(Y)}:=f_a(E)\cap Y'$, we have that $Y'=\overline{f_{-1}(Y)}\oplus\tilde{Y}'$ for some torsion-free subgroup $\tilde{Y}'\leq Y'$ (we fix $\tilde{Y}'$ from now on), and $E'=f_{a}(E)\oplus\tilde{E}'$ with $\tilde{E}':=\tilde{Y}'\otimes\Q$. After choosing a $\Z$-basis of $\tilde{Y}'$, we make a $\tilde{Y}'$-admissible polytope decomposition $\Lambda$ of $\tilde{E}'$ by the cubes with respect to this basis. Note that $f_a(\Sigma)$ is an $f_a(H)$-admissible polytope decomposition of $f_a(E)$. We make a polytope decomposition $\Sigma'$ by the product of $f_a(\Sigma)$ and $\Lambda$. Let $H':=f_a(H)\oplus\tilde{Y}'$, it is clear that $\Sigma'$ is $H'$-admissible. So we get a pair
\begin{equation}\label{constr4.3.1}
(H',\Sigma')\in\mr{PolDecom}_{Y'}
\end{equation} 
with required properties. It follows that we get a morphism $A_{H,\Sigma}\rightarrow A_{H',\Sigma'}'$ of proper algebraic $S$-spaces. We map $A_{H,\Sigma}$ into $A'$ by the composition $A_{H,\Sigma}\rightarrow A_{H',\Sigma'}'\rightarrow A'$.

The map $A_{H,\Sigma}\rightarrow A'$ is independent of the choice of $(H',\Sigma')$. If we are given another such pair $(H'_1,\Sigma'_1)$, then the pair $(H'\cap H_1',\Sigma'\sqcap\Sigma'_1)$ is a third such pair. Hence we get the following commutative diagram
\begin{equation*}
\xymatrix{
                 &A_{H,\Sigma}\ar[ld]\ar[rd]\ar[d] \\
A_{H',\Sigma'}'  &A_{H'\cap H'_1,\Sigma'\sqcap\Sigma'_1}'\ar[r]\ar[l] &A_{H'_1,\Sigma'_1}'
}
\end{equation*}
which guarantees the well-definedness of the map $A_{H,\Sigma}\rightarrow A'$. We denote it by $\mr{Deg}(f)_{H,\Sigma}$.

Now we want show that the collection $(\mr{Deg}(f)_{H,\Sigma})_{(H,\Sigma)}$ glues into a map from $A$ to $A'$. In other words, the collection $(\mr{Deg}(f)_{H,\Sigma})_{(H,\Sigma)}$ is compatible with the equivalence relation in (\ref{deflav}). Given a map $(H_1,\Sigma_1)\rightarrow (H_2,\Sigma_2)$ in $\mr{PolDecom}_Y$, hence $H_1$ is a subgroup of $H_2$ and $\Sigma_1$ is a subdivision of $\Sigma_2$. The construction (\ref{constr4.3.1}) is compatible with $(H_1,\Sigma_1)\rightarrow (H_2,\Sigma_2)$, hence we get a commutative diagram
$$\xymatrix{
A_{H_1,\Sigma_1} \ar[r]\ar[d] &A_{H_2,\Sigma_2}\ar[d] \\
A_{H_1',\Sigma_1'}'\ar[r] &A_{H_2',\Sigma_2'}'
}.$$
This shows the compatibility for the first kind of equivalences in (\ref{deflav}). Similar argument works for the second kind of equivalences in (\ref{deflav}) too. Hence the collection $(\mr{Deg}(f)_{H,\Sigma})_{(H,\Sigma)}$ glues into a map $\mr{Deg}(f):A\rightarrow A'$. Since the group law of $A$ is defined with respect to Lemma \ref{lem4.2.4}, the map $\mr{Deg}(f)$ is actually a homomorphism. The construction of $\mr{Deg}$ is clearly functorial, whence the following theorem.

\begin{thm}\label{thm4.3}
The association of a log abelian variety to any split totally degenerate abelian variety over $K$ gives rise to a functor $\mr{Deg}:\mr{STDAV}_K\rightarrow \mr{LAV}_S$. Moreover, the functor $A\mapsto A_K:=A\times_S \mr{Spec}K$ is left inverse to $\mr{Deg}$ and $\mr{Deg}$ is fully faithful.
\end{thm}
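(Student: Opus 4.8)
The plan is to dispatch the three assertions in order, letting the left-inverse property carry most of the weight: functoriality was already checked before the statement, the left-inverse claim is largely Theorem \ref{thm4.1}(a), faithfulness is formal once a left inverse is in hand, and only fullness requires real work.

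First I would record what has been done: for a homomorphism $f\colon A_K\to A_K'$ the construction culminating in (\ref{constr4.3.1}) produces a well-defined $\mr{Deg}(f)\colon A\to A'$, compatible with composition and identities, so $\mr{Deg}$ is a functor. For the left inverse, Theorem \ref{thm4.1}(a) gives $\mr{Deg}(A_K)\times_S\mr{Spec}K=A_K$ on objects, and on morphisms each building block $A_{H,\Sigma}\to A_{H',\Sigma'}'$ of $\mr{Deg}(f)$ restricts on generic fibres to the finite \'etale cover map descending to $f$; hence $\mr{Deg}(f)_K=f$ and $(-)_K\circ\mr{Deg}=\mr{id}_{\mr{STDAV}_K}$. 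In particular $\mr{Deg}$ is faithful, since a functor with a left inverse is automatically faithful.

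The substantive step is fullness. Given $\phi\colon A\to A'$ in $\mr{LAV}_S$, I would set $f:=\phi_K\colon A_K\to A_K'$, which is a morphism in $\mr{STDAV}_K$ because the generic fibres of $A$ and $A'$ are $A_K$ and $A_K'$ by Theorem \ref{thm4.1}(a), and show $\mr{Deg}(f)=\phi$. By a standard limit argument it suffices to compare the two homomorphisms on $A'(B)$ for $\mr{Spec}B$ a noetherian fs log scheme over $S$; and here I would invoke Lemma \ref{lem4.2.3} for the target $A'$, which asserts that $A'(B)\hookrightarrow A'(B\otimes_RK)\times\varprojlim_nA'(B/\pi^{n+1}B)$. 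Thus it is enough that $\mr{Deg}(f)$ and $\phi$ agree after restriction to $(\mr{fs}/K)$ and after restriction to $(\mr{fs}/S)'$. Over $(\mr{fs}/K)$ both restrict to $f$, by construction and by the very definition $f=\phi_K$. Over $(\mr{fs}/S)'$ both are homomorphisms $\Tlog^{(Y)}/Y\to\Tlog'^{(Y')}/Y'$ by Theorem \ref{thm4.1}(a); such a homomorphism is pinned down by the induced map $Y\to Y'$ on period lattices together with the induced map $X'\to X$ on character groups (obtained by restricting to the torus $T$), and these are precisely the data $f_{-1}$ and its dual recovered from $\phi_K=f$ via the rigid-analytic uniformisation and $1$-motive duality, i.e. exactly the data feeding (\ref{constr4.3.1}). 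Hence the two restrictions coincide and $\mr{Deg}(f)=\phi$, giving fullness.

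The main obstacle I anticipate is the verification over $(\mr{fs}/S)'$: one must show that a homomorphism $\Tlog^{(Y)}/Y\to\Tlog'^{(Y')}/Y'$ of sheaves of abelian groups carries no more information than the pair $(Y\to Y',\,X'\to X)$ compatible with the pairings of (\ref{eq1.1}), so that no extra logarithmic degrees of freedom survive beyond the lattice data extracted from the generic fibre. This forces one to use the explicit description of $\Tlog^{(Y)}/Y$ as the quotient of $\mc{H}om(X,\Gml)$ by $Y$ and the compatibility of both induced maps with the pairings; once this rigidity is secured, the remainder is bookkeeping through the injectivity of Lemma \ref{lem4.2.3}.
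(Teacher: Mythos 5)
Your handling of functoriality, the left inverse, and faithfulness is fine and matches the paper (which likewise reduces everything to fullness), and your reduction of fullness via Lemma \ref{lem4.2.3} --- showing $\mr{Deg}(\phi_K)=\phi$ by checking agreement over $(\mr{fs}/K)$ and over $(\mr{fs}/S)'$ --- is a legitimate and genuinely different framing from the paper's. But the proposal has a real gap exactly where you flag ``the main obstacle'': the assertion that a homomorphism of sheaves $T_{\mr{log}}^{(Y)}/Y\rightarrow (T'_{\mr{log}})^{(Y')}/Y'$ on $(\mr{fs}/S)'$ induces, and is determined by, a pair of lattice maps $Y\rightarrow Y'$ and $X'\rightarrow X$ is never proved, and it is not bookkeeping --- it is the essential content of the theorem. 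Concretely: (a) even to get the map $X'\rightarrow X$ you need that the composite $T\rightarrow (T'_{\mr{log}})^{(Y')}/Y'\rightarrow \mc{H}om(X',\Gml/\Gm)^{(Y')}/\overline{Y'}$ vanishes, i.e.\ a vanishing theorem of the type \cite[9.2]{k-k-n2}, which you never invoke; (b) $Y$ is not a subsheaf of $T_{\mr{log}}^{(Y)}/Y$, so ``the induced map $Y\rightarrow Y'$'' requires a construction (a lifting/monodromy argument along the quotient $T_{\mr{log}}^{(Y)}\rightarrow T_{\mr{log}}^{(Y)}/Y$) that is absent; (c) for the determination claim you must rule out homomorphisms that kill $T$ and the lattice data but are nonzero --- for instance possible nonzero maps $\mc{H}om(X,\Gml/\Gm)^{(Y)}/\bar{Y}\rightarrow T'$ --- and nothing in the proposal does this; (d) finally, you must show the lattice maps read off from $\phi|_{(\mr{fs}/S)'}$ coincide with $f_{-1}$ and its dual extracted from $\phi_K$ by rigid uniformisation, a nontrivial comparison of special-fibre and generic-fibre data.

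The paper closes precisely this gap by a different mechanism. It splits any homomorphism $f\colon A_1\rightarrow A_2$ into a pair $(g,h)$ using the extension $0\rightarrow G_i\rightarrow A_i\rightarrow \mc{H}om(X_i,\Gml/\Gm)^{(Y_i)}/\bar{Y_i}\rightarrow 0$ together with the vanishing $\mr{Hom}(G_1,\,\mc{H}om(X_2,\Gml/\Gm)^{(Y_2)}/\bar{Y_2})=0$ of \cite[9.2]{k-k-n2}; then $g$ is determined by $g_K=f_K$ (a morphism of semiabelian schemes over $S$ is determined by its generic fibre), and --- this is the decisive step --- $h$ is determined by $g$ via the Faltings--Chai equivalence \cite[Chap.~III, Prop.~6.4]{f-c}, which recovers the log $1$-motive $[Y_i\rightarrow (T_i)_{\mr{log}}]$ from the degeneration data of $G_i$. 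Your route avoids Faltings--Chai, but only by assuming an unproved rigidity statement that is at least as strong; to make your argument complete you would either have to prove that rigidity directly (importing the vanishing results of \cite{k-k-n2} and an analysis of homomorphisms between the quotient sheaves), or fall back on the paper's degeneration-data argument.
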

\begin{proof}
The functor $A\mapsto A_K$ is clearly left inverse to Deg by the construction of Deg. We only need to check the fullness. 

Given any two log abelian varieties $A_1$ and $A_2$ over $S$, let
$$0\rightarrow G_1\rightarrow A_1\rightarrow \mc{H}om_S(X_1,\Gml/\Gm)^{(Y_1)}/\bar{Y_1}\rightarrow 0$$
and 
$$0\rightarrow G_2\rightarrow A_2\rightarrow \mc{H}om_S(X_2,\Gml/\Gm)^{(Y_2)}/\bar{Y_2}\rightarrow 0$$
be the corresponding short exact sequences associated to the log abelian varieties $A_1$ and $A_2$ (see \cite[Def. 4.1, 4.1.2]{k-k-n2}) respectively. Let $f:A_1\rightarrow A_2$ be a homomorphism of log abelian varieties, then $f$ induces a homomorphism
$$\tilde{f}:G_1\rightarrow\mc{H}om_S(X_2,\Gml/\Gm)^{(Y_2)}/\bar{Y_2}.$$
Since $\tilde{f}$ is zero by \cite[9.2]{k-k-n2}, $f$ induces homomorphisms $g:G_1\rightarrow G_2$ and 
$$h:\mc{H}om_S(X_1,\Gml/\Gm)^{(Y_1)}/\bar{Y_1}\rightarrow \mc{H}om_S(X_2,\Gml/\Gm)^{(Y_2)}/\bar{Y_2}.$$
The pair $(g,h)$ determines $f$ by Lemma \ref{lem4.2.5} below. It is also clear that $g_K=f_K$ determines $g$. We are going to show that $g$ determines $h$ in the case that both $A_1$ and $A_2$ lie in the image of Deg. Then $f_K$ determines $f$, hence $f=\mr{Deg}(f_K)$ and the fullness follows. 

Now suppose that $A_1$ and $A_2$ lie in the image of the functor Deg, and they are constructed out of some bilinear pairings $<,>_i:X_i\times Y_i\rightarrow \Gml$, see (\ref{eq4.1}). The pairing $<,>_i$ gives rise to $ \mc{H}om_S(X_i,\Gml/\Gm)^{(Y_i)}/\bar{Y_i}$, and it corresponds to a log 1-motive $[Y_i\rightarrow (T_i)_{\mr{log}}]$. The log 1-motive $[Y_i\rightarrow (T_i)_{\mr{log}}]$ can be recovered from $G_i$ through the equivalence of categories in \cite[Chap. III, Prop. 6.4]{f-c}. The map $g$ is nothing but a map between two elements of $\mr{DEG}_{\mr{ample}}$ forgetting the invertible sheaves, while the map $h$ can be obtained from the map of log 1-motives corresponding to $g$. Hence the map $h$ is determined by the map $g$. This finishes the proof.
\end{proof}

\begin{lem}\label{lem4.2.5}
Let $G_2$ and $\mc{H}om_S(X_1,\Gml/\Gm)^{(Y_1)}/\bar{Y_1}$ be as in the proof of Theorem \ref{thm4.3}. Then $\mr{Hom}_S(\mc{H}om_S(X_1,\Gml/\Gm)^{(Y_1)}/\bar{Y_1},G_2)=0$.
\end{lem}
\begin{proof}
Let $\mc{Q}_1$ denote $\mc{H}om_S(X_1,\Gml/\Gm)^{(Y_1)}=T^{(Y_1)}_{1\mr{log}}/T_1$, where $T_1$ denotes $\mc{H}om_S(X_1,\Gm)$. If suffices to show that $\mr{Hom}_S(\mc{Q}_1,G_2)=0$. The proof of $\mr{Hom}_S(\mc{Q}_1,G_2)=0$ is motivated by the proof of \cite[Lem. 6.1.5]{k-k-n2}.

Let $E_1=\mr{Hom}(X_1,\Q)$, $\mathbb{X}_1=\pi^{\Z}\oplus X$, and $\mathbb{E}_1=\mr{Hom}(\mathbb{X}_1,\Q)\cong\Q\oplus E_1$. For a polytope $\sigma$ inside $E_1$, we denote by $C(\sigma)$ the cone in $\mathbb{E}_1$ above $\sigma$ with $E_1$ identified with the hyperplane $\{1\}\times E_1$ in $\mathbb{E}_1$. We denote by $P_{\sigma}$ the log scheme $\mr{Spec}R[C(\sigma)^{\vee}\cap\mathbb{X}_1]$ endowed with the log structure associated to $C(\sigma)^{\vee}\cap\mathbb{X}_1\rightarrow R[C(\sigma)^{\vee}\cap\mathbb{X}_1]$.

Now let $U\in(\mr{fs}/S)$. By \cite[\S 7.7]{k-k-n2}, any section $\alpha$ of $T^{(Y_1)}_{1\mr{log}}/T_1$ on $U$ can be obtained, \'etale locally on $U$, as $a^*(u)\mathop{\mr{mod}}T_1$ for some $a\in P_{\sigma}(U)$ \footnote{In terms of the notation of \cite[\S 7.7]{k-k-n2}, $P_{\sigma}$ is the log scheme representing the subsheaf $V(C(\sigma)^{\vee}\cap\mathbb{X}_1)$ of $T^{(Y_1)}_{1\mr{log}}$.} for some polytope $\sigma$ inside $E_1$, where $u\in T^{(Y_1)}_{1\mr{log}}(P_{\sigma})$ is the universal section
$$x\in X_1\mapsto x\in M_{P_{\sigma}}^{\mr{gp}}.$$ 
For any homomorphism $\varphi:T^{(Y_1)}_{1\mr{log}}/T_1\rightarrow G_2$, we have 
$$\varphi(\alpha)=\varphi(a^*(u)\mathop{\mr{mod}}T_1)=a^*(\varphi(u\mathop{\mr{mod}}T_1)).$$
Hence we are reduced to show that $\varphi(u\mathop{\mr{mod}}T_1)=0$. Since $u\mathop{\mr{mod}}T_1=0$ vanishes on $P_{\sigma}\times_S\mr{Spec}K$, the section $\varphi(u\mathop{\mr{mod}}T_1)\in G_2(P_{\sigma})$ vanishes in $G_2(P_{\sigma}\times_S\mr{Spec}K)$. Since $G_2$ is separated over $S$ and $P_{\sigma}$ is reduced, the map $G_2(P_{\sigma})\rightarrow G_2(P_{\sigma}\times_S\mr{Spec}K)$ is injective. It follows that $\varphi(u\mathop{\mr{mod}}T_1)=0$.
\end{proof}

\begin{rmk}
In future we hope to generalise the results in Theorem \ref{thm4.2} and Theorem \ref{thm4.3} to all semi-stable abelian varieties over $K$. In the case that the Rayanud extension of $A_K$ has split torus part, the result follows probably from the split totally degenerate case with the help of contracted product. As stated in \cite[13.4]{k-k-n4}, the category of semi-stable abelian varieties over $K$ is canonically equivalent to the category of log abelian varieties over $S$. Our functor Deg should be the restriction of this equivalence to $\mr{STDAV}_K$. Such an equivalence justifies the motto ``log abelian varieties are canonical degenerations of abelian varieties''.
\end{rmk}

\bibliographystyle{alpha}
\bibliography{bib}

\begin{thebibliography}{KKMSD73}

\bibitem[Ale02]{ale1}
Valery Alexeev.
\newblock Complete moduli in the presence of semiabelian group action.
\newblock {\em Ann. of Math. (2)}, 155(3):611--708, 2002.

\bibitem[AN99]{a-n1}
Valery Alexeev and Iku Nakamura.
\newblock On {M}umford's construction of degenerating abelian varieties.
\newblock {\em Tohoku Math. J. (2)}, 51(3):399--420, 1999.

\bibitem[Art70]{art2}
M.~Artin.
\newblock Algebraization of formal moduli. {II}. {E}xistence of modifications.
\newblock {\em Ann. of Math. (2)}, 91:88--135, 1970.

\bibitem[Bha16]{bha1}
Bhargav Bhatt.
\newblock Algebraization and {T}annaka duality.
\newblock {\em Camb. J. Math.}, 4(4):403--461, 2016.

\bibitem[DR73]{d-r1}
P.~Deligne and M.~Rapoport.
\newblock Les sch\'emas de modules de courbes elliptiques.
\newblock In {\em Modular functions of one variable, {II} ({P}roc. {I}nternat.
  {S}ummer {S}chool, {U}niv. {A}ntwerp, {A}ntwerp, 1972)}, pages 143--316.
  Lecture Notes in Math., Vol. 349. Springer, Berlin, 1973.

\bibitem[FC90]{f-c}
Gerd Faltings and Ching-Li Chai.
\newblock {\em Degeneration of abelian varieties}, volume~22 of {\em Ergebnisse
  der Mathematik und ihrer Grenzgebiete (3) [Results in Mathematics and Related
  Areas (3)]}.
\newblock Springer-Verlag, Berlin, 1990.
\newblock With an appendix by David Mumford.

\bibitem[GM71]{g-m}
Alexander Grothendieck and Jacob~P. Murre.
\newblock {\em The tame fundamental group of a formal neighbourhood of a
  divisor with normal crossings on a scheme}.
\newblock Lecture Notes in Mathematics, Vol. 208. Springer-Verlag, Berlin-New
  York, 1971.

\bibitem[Gro65]{egaIV-2}
A.~Grothendieck.
\newblock \'{E}l\'ements de g\'eom\'etrie alg\'ebrique (r\'edig\'es avec la
  collaboration de {J}ean {D}ieudonn\'e): {IV}. \'{E}tude locale des sch\'emas
  et des morphismes de sch\'emas, {S}econde partie.
\newblock {\em Inst. Hautes \'Etudes Sci. Publ. Math.}, (24):5--231, 1965.

\bibitem[Gro67]{egaIV-4}
A.~Grothendieck.
\newblock \'{E}l\'ements de g\'eom\'etrie alg\'ebrique (r\'edig\'es avec la
  collaboration de {J}ean {D}ieudonn\'e): {IV}. \'{E}tude locale des sch\'emas
  et des morphismes de sch\'emas, {Q}uatri\`eme partie.
\newblock {\em Inst. Hautes \'Etudes Sci. Publ. Math.}, (32):5--361, 1967.

\bibitem[Kat89a]{kat1}
Kazuya Kato.
\newblock Logarithmic structures of {F}ontaine-{I}llusie.
\newblock In {\em Algebraic analysis, geometry, and number theory ({B}altimore,
  {MD}, 1988)}, pages 191--224. Johns Hopkins Univ. Press, Baltimore, MD, 1989.

\bibitem[Kat89b]{kat3}
Kazuya Kato.
\newblock Logarithmic degeneration and {D}ieudonn\'e theory, preprint 1989.

\bibitem[Kat96]{kat.fum1}
Fumiharu Kato.
\newblock Log smooth deformation theory.
\newblock {\em Tohoku Math. J. (2)}, 48(3):317--354, 1996.

\bibitem[KKMSD73]{k-k-m-s}
G.~Kempf, Finn~Faye Knudsen, D.~Mumford, and B.~Saint-Donat.
\newblock {\em Toroidal embeddings. {I}}.
\newblock Lecture Notes in Mathematics, Vol. 339. Springer-Verlag, Berlin-New
  York, 1973.

\bibitem[KKN08a]{k-k-n2}
Takeshi Kajiwara, Kazuya Kato, and Chikara Nakayama.
\newblock Logarithmic abelian varieties.
\newblock {\em Nagoya Math. J.}, 189:63--138, 2008.

\bibitem[KKN08b]{k-k-n1}
Takeshi Kajiwara, Kazuya Kato, and Chikara Nakayama.
\newblock Logarithmic abelian varieties. {I}. {C}omplex analytic theory.
\newblock {\em J. Math. Sci. Univ. Tokyo}, 15(1):69--193, 2008.

\bibitem[KKN15]{k-k-n4}
Takeshi Kajiwara, Kazuya Kato, and Chikara Nakayama.
\newblock Logarithmic abelian varieties, {P}art {IV}: {P}roper models.
\newblock {\em Nagoya Math. J.}, 219:9--63, 2015.

\bibitem[Knu71]{knu1}
Donald Knutson.
\newblock {\em Algebraic spaces}.
\newblock Lecture Notes in Mathematics, Vol. 203. Springer-Verlag, Berlin-New
  York, 1971.

\bibitem[Mat80]{mat1}
Hideyuki Matsumura.
\newblock {\em Commutative algebra}, volume~56 of {\em Mathematics Lecture Note
  Series}.
\newblock Benjamin/Cummings Publishing Co., Inc., Reading, Mass., second
  edition, 1980.

\bibitem[Mum72]{mum1}
David Mumford.
\newblock An analytic construction of degenerating abelian varieties over
  complete rings.
\newblock {\em Compositio Math.}, 24:239--272, 1972.

\bibitem[Oda88]{oda1}
Tadao Oda.
\newblock {\em Convex bodies and algebraic geometry}, volume~15 of {\em
  Ergebnisse der Mathematik und ihrer Grenzgebiete (3) [Results in Mathematics
  and Related Areas (3)]}.
\newblock Springer-Verlag, Berlin, 1988.
\newblock An introduction to the theory of toric varieties, Translated from the
  Japanese.

\bibitem[Ols03]{ols1}
Martin~C. Olsson.
\newblock {\em Log algebraic stacks and moduli of log schemes}.
\newblock PhD thesis, University of California, Berkeley, 2003.

\bibitem[Pah05]{pah1}
Volker Pahnke.
\newblock {\em Uniformisierung log-abelscher {V}ariet\"aten}.
\newblock [s.n.], [s.l.], 2005.

\bibitem[{Sta}17]{stacks-project}
The {Stacks Project Authors}.
\newblock \itshape stacks project.
\newblock \url{http://stacks.math.columbia.edu}, 2017.

\end{thebibliography}

\end{document}